\documentclass[11pt]{amsart}

\usepackage[T1]{fontenc}
\usepackage[margin=1.08in]{geometry}
\usepackage{amsmath,amssymb,amsthm,mathtools}
\usepackage{enumitem}
\usepackage[colorlinks=true,citecolor=blue,linkcolor=blue,urlcolor=blue]{hyperref}
\usepackage{amsfonts}
\usepackage{graphicx}
\usepackage{subfigure}
\usepackage{float}
\usepackage{cleveref}
\usepackage{tikz}
\usepackage{tikz-3dplot}
\usepackage{mathrsfs}
\usepackage{bm}
\usepackage{esint}
\usepackage{appendix}

\usepackage{booktabs,longtable,array}

\newtheorem{theorem}{Theorem}[section]
\newtheorem{proposition}[theorem]{Proposition}
\newtheorem{lemma}[theorem]{Lemma}
\newtheorem{corollary}[theorem]{Corollary}

\theoremstyle{definition}
\newtheorem{definition}[theorem]{Definition}
\theoremstyle{remark}
\newtheorem{remark}[theorem]{Remark}
\newcommand{\NS}{\operatorname{NS}}
\newcommand{\Nef}{\operatorname{Nef}}
\newcommand{\Aut}{\operatorname{Aut}}
\newcommand{\Amp}{\operatorname{Amp}}

\newcommand{\rk}{\operatorname{rk}}
\newcommand{\Kah}{\mathcal K}
\newcommand{\Pos}{\mathcal P}
\newcommand{\CY}{\operatorname{CY}}
\newcommand{\diam}{\operatorname{diam}}
\newcommand{\Vol}{\operatorname{Vol}}
\newcommand{\Ric}{\operatorname{Ric}}
\newcommand{\ddc}{dd^{c}}

\newcommand{\R}{\mathbb R}
\newcommand{\Q}{\mathbb Q}
\newcommand{\C}{\mathbb C}
\newcommand{\Z}{\mathbb Z}
\newcommand{\length}{\ell}

\newcommand{\Kthree}{\Lambda_{\mathrm{K3}}}
\newcommand{\cP}{\mathcal P}

\title{Rigid and Nonrigid Isotropic Nef Classes on K3 Surfaces}
\author{Xin Fu}
\address{School of Science, Institute for Theoretical Sciences, Westlake University, Hangzhou 310030, China}
\email{fuxin54@westlake.edu.cn}
\author{Zexuan Ouyang}
\address{Westlake Institute for Advanced Study, Westlake University, Hangzhou 310030, China}
\email{ouyangzexuan@westlake.edu.cn}

\thanks{Xin Fu is supported by the National Key R\&D Program of China (2024YFA1014800) and NSFC grant No. 12401073.}

\thanks{Zexuan Ouyang is supported by the National Key R\&D Program
	of China No. 2023YFA1009900.}

\date{}

\begin{document}
	
	\begin{abstract}
		Let $X$ be a fixed complex K3 surface and let
		$0\ne\alpha\in\overline{\Kah_X}$ be a $(1,1)$-class on the boundary of the K\"ahler cone with $\alpha^2=0$. We build a novel connection between two seemingly unrelated, intensively studied problems: the rigidity of a closed positive current in the class $\alpha$ and the collapse geometry of a sequence of Ricci-flat K\"ahler metrics in the K\"ahler class $\alpha+t\kappa$ as $t\to 0^+$. More precisely, we prove that
		$\alpha$ contains a unique closed positive $(1,1)$-current
		if and only if the Ricci-flat metrics in class $\alpha+t\kappa$ collapse to a point in the Gromov--Hausdorff sense.
		
		Then we exhibit the first example of a nonrigid irrational nef class $\alpha$ on a Kummer surface, which answers a question of
		Filip--Tosatti and Sibony--Soldatenkov--Verbitsky. We also construct many new examples of rigid nef classes on hyperk\"ahler manifolds, which extend the work of Filip--Tosatti and Sibony--Soldatenkov--Verbitsky.

		

	\end{abstract}
	
	\maketitle
	
	\setcounter{tocdepth}{1}
	\tableofcontents

	\section{Introduction}
	
	Let $X$ be a Calabi--Yau manifold and let
	$\Kah_X\subset H^{1,1}(X,\R)$ be its K\"ahler cone.  For
	$\beta\in\Kah_X$, the Calabi--Yau theorem \cite{Yau} gives a unique
	Ricci-flat K\"ahler form $\CY(\beta)$ in the class $\beta$.
	The behavior of these metrics as their K\"ahler classes approach the
	boundary of $\Kah_X$ has been studied extensively from both analytic
	and geometric perspectives.
	
	From the viewpoint of pluripotential theory, one seeks uniform estimates
	for the potentials of $\CY(\alpha+t\kappa)$ as $t\to0^+$, where
	$\alpha\in\partial\Kah_X$ and $\kappa\in\Kah_X$.  Closely related
	questions concern the positive currents representing $\alpha$.
	A class is called \emph{rigid} if it contains a unique closed positive
	$(1,1)$-current.  The boundedness of potentials and the rigidity of
	boundary classes are central questions in this setting, see the survey of Tosatti
	\cite{Tosatti}.
	
	From the geometric viewpoint, one studies the limits and regularity of
	the associated Ricci-flat metrics.  When the limiting class is big and
	nef, the metrics converge smoothly away from its null locus to a
	Ricci-flat metric, possibly incomplete
	\cite{TosattiLimits,CollinsTosatti}.  When the limiting class is nef but
	not big, the volumes tend to zero and collapsing phenomena arise.
	Cheeger--Tian developed analytic tools for collapsing Einstein
	four-manifolds \cite{CheegerTian}, and Sun--Zhang established a fibration structure in regular region based on the results of 	Cheeger--Fukaya--Gromov \cite{CGF}, and gave a rough classification for the collapsing limits \cite{SunZhang}. The classification is completed in \cite{Ouyang25}. The marked period asymptotics of Ouyang--Tian
	\cite{OuyangTian} provide further information used in this paper.
	In higher dimensions, a particularly important case occurs when the
	limiting class is semiample and induces a holomorphic fibration.
	Convergence, higher-order asymptotics, and the geometry of the resulting
	metric spaces have been studied in
	\cite{MR4201795,GrossWilson,SongTian07,FGS20,HSVZ,HeinTosattiSmooth,GrossTosattiZhangGH,GGZ,GG,Ouyang25,ZhaZha,MR2652468,MR4728691}.
	
	Our main geometric setting is a fixed complex K3 surface $X$ and a
	nonzero nef class $\alpha\in\overline{\Kah_X}$ satisfying
	$\alpha^2=0$.  We call such a class \emph{isotropic}, and its ray is
	\emph{irrational} if
	\[
	\R\alpha\cap H^2(X,\Q)=\{0\}.
	\]
	For an irrational class, a holomorphic fibration need not be available,
	and the collapsing geometry is less directly accessible.  We study
	two related questions: whether $\alpha$ is rigid, and whether the
	diameters of approaching Ricci-flat metrics tend to zero.
	Rigidity in this setting has been investigated in
	\cite{SSV,FilipTosattiSmooth,FilipTosattiKummer,FilipTosatti}.  The corresponding Ricci-flat
	metrics have uniformly bounded diameter \cite{Tosatti}, the issue is
	therefore to determine when the limit is a point and how this behavior
	is reflected in the positive currents representing $\alpha$.
	
	Our first result identifies current rigidity with metric collapse.
	
	\begin{theorem}
		\label{thm:equivalence}
		Let $X$ be a K3 surface and let
		$0\ne\alpha\in\overline{\Kah_X}$ satisfy $\alpha^2=0$.
		Put $D(\beta):=\diam(X,\CY(\beta))$ for $\beta\in\Kah_X$.
		Then the following conditions are equivalent:
		\begin{enumerate}[label=\textup{(\roman*)}]
			\item $\alpha$ is rigid;
			\item $D(\beta_i)\to0$ for every sequence
			$\beta_i\in\Kah_X$ satisfying $\beta_i\to\alpha$;
			\item $D(\alpha+t\kappa)\to0$ as $t\to0^+$ for some
			$\kappa\in\Kah_X$.
		\end{enumerate}
	\end{theorem}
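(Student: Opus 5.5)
We will prove the cycle (ii)$\Rightarrow$(iii)$\Rightarrow$(i)$\Rightarrow$(ii). The implication (ii)$\Rightarrow$(iii) is trivial: take any $\kappa\in\Kah_X$ and $\beta_i=\alpha+t_i\kappa$.

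\emph{(iii)$\Rightarrow$(i).} Write $\omega_t=\CY(\alpha+t\kappa)$. Let $T\ge 0$ be any closed positive current in $\alpha$. We aim to show that $T$ equals a canonical current $T_0$. Natural candidates for $T_0$ are a weak limit of $\omega_t$, or the current of minimal singularities.

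Fix a smooth representative $\theta\in\alpha$, and write $\omega_t=\theta+t\hat\kappa+\ddc\varphi_t$ with $\sup\varphi_t=0$. The plan is to use the diameter collapse to get oscillation control: $\operatorname{osc}\varphi_t\to$ something controlled. The standard tool is a Green's function or Poincaré-type argument on $(X,\omega_t)$. With the normalized volume measure $\mu_t=\omega_t^2/\int\omega_t^2=\Omega$, which is fixed since the metrics are Ricci-flat and $\omega_t^2=c_t\,\Omega$ for a holomorphic volume form, the Green's function of $\omega_t$ is bounded below by $-C\,D(t)^2/\Vol$-type quantities (Cheeger--Colding / Li--Yau bounds for Ricci-flat metrics).

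The concrete route is this. Given $T=\theta+\ddc\psi$, consider $u_t=\psi-\varphi_t$. It satisfies $\ddc u_t = T-\omega_t+t\hat\kappa \ge -\omega_t$, so $\Delta_{\omega_t}u_t\ge -2$. Green's representation then gives
\[
\sup u_t - \int u_t\,\Omega \;\le\; 2\int G_t\,\omega_t^2/\Vol \;\lesssim\; D(t)^2 .
\]
Since $D(t)\to 0$, every quasi-psh function differs from $\varphi_t$ by a quantity whose "sup minus average" tends to zero. Applying this both to $\psi$ and to the minimal-singularity potential, and passing to an $L^1$ limit of $\varphi_t$, we find that $\psi$ is, up to a constant, at most $\varphi_\infty$ in the sense $\sup(\psi-\varphi_\infty)\le\int(\psi-\varphi_\infty)\Omega$. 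This forces $\psi-\varphi_\infty$ to be a.e. constant, so $T=\lim\omega_t$. The key inputs are the Green's function lower bound $G_t\ge -C D^2/V$ for $\Ric\ge0$ in terms of diameter (Bando--Mabuchi style), and the fact that $\Omega$ is $t$-independent. The subtle point is to make the inequality rigorous for singular $\psi$; we will regularize $\psi$ by Demailly approximation with small loss $\epsilon\hat\kappa$, absorbed by taking $t\gg\epsilon$ suitably.

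\emph{(i)$\Rightarrow$(ii).} This is the main obstacle. Suppose $\beta_i\to\alpha$ but $D(\beta_i)\ge\delta>0$. Normalize $\beta_i$ and rescale the metrics $\omega_i=\CY(\beta_i)$. Their diameters are uniformly bounded (Tosatti) and their volumes tend to $0$. Pass to a Gromov--Hausdorff limit $Z$, which by Sun--Zhang and Ouyang is a collapsed limit of dimension $1$, $2$ or $3$ with positive diameter. Using the regular-region fibration structure, we will produce a nonconstant function $f$ on $Z$ and pull back bump functions along approximations to build two distinct limit currents in $\alpha$. Concretely, we take $\omega_i$ weighted by $\chi\circ d_i(p_i,\cdot)$ (a collapsing-region local potential), or we use the Ouyang--Tian period asymptotics to show that the weak limits of $\omega_i$ differ from the current of minimal singularities. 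One first tries the following: show that the weak limit $T_\infty$ of $\omega_i$ has nontrivial "transverse measure" along $Z$, and that averaging $T_\infty$ against a nonconstant function on $Z$ yields another closed positive current in $\alpha$. Establishing the closedness of this modified current is where the collapsing structure theory is genuinely needed, so this is the step we expect to be hardest.
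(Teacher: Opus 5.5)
Your (ii)$\Rightarrow$(iii) is fine, and your (iii)$\Rightarrow$(i) is a valid one-sided variant of the paper's Green-function argument (Proposition~\ref{prop:general-green-rigidity}).

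Take $\hat\kappa$ to be a K\"ahler form. Then $\psi-\varphi_t$ is $\omega_t$-psh, since $\omega_t+dd^c(\psi-\varphi_t)=T+t\hat\kappa\ge0$; this is exactly where the K\"ahler direction of the approach enters. The bound $G_{\omega_t}\ge -CD(t)^2/V_t$ then gives $\psi-\varphi_t-c_t\le CD(t)^2$, where $c_t$ is the $\mu$-mean. Because the left side has mean zero, its $L^1(\mu)$ norm is at most $2CD(t)^2$. Doing this for two currents against the same $\varphi_t$ gives $d_\mu(T_0,T_1)\le 4CD(t)^2$ directly. So neither the minimal-singularity potential nor the limit $\varphi_\infty$ is needed.

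The paper instead applies the Green kernel to the difference of two potentials. It uses that $(T_1-T_0)\wedge\omega$ has total variation at most $2\int\alpha\wedge\beta$. The inputs are the same: the Bando--Mabuchi bound and the $\beta$-independence of $\mu$.

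The genuine gap is (i)$\Rightarrow$(ii). None of your candidate constructions is carried out, and they target the wrong difficulty. The weighted form $\chi(d_i(p_i,\cdot))\,\omega_i$, or $T_\infty$ averaged against a function on $Z$, is not a closed current. No collapsing structure theory will make it closed. Gromov--Hausdorff approximations and distance functions also give no second-derivative control, which is what is actually required.

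The missing idea is to perturb by an exact form, $\omega_i^\pm=\omega_i\pm\varepsilon\,dd^cu_i$. This is automatically closed and lies in $\beta_i$, so only positivity, uniformly in $i$, is at stake.

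The paper's construction runs as follows.
\begin{itemize}
\item It takes $u_i=h\circ F_i$, where $F_i$ are the smooth Sun--Zhang fibration maps over a regular region of $Z$ with $|\nabla F_i|+|\nabla^2F_i|\le C$ (Proposition~\ref{prop:regular-fibration}).
\item Since $\nabla^{g_i}J=0$, this gives $-C_1\omega_i\le dd^cu_i\le C_1\omega_i$ with $C_1$ independent of $i$. Hence $\omega_i^\pm$ are K\"ahler for one fixed small $\varepsilon$.
\item It obtains $L^1$ compactness of $u_i$ from the fact that $u_i+C_1\varphi_i$ is uniformly quasi-psh. This yields weak limits $T\pm\varepsilon\,dd^cu\ge0$ in $\alpha$.
\item It shows $u$ is nonconstant using plateau sets $\{u_i=0\}$ and $\{u_i=1\}$. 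These contain $g_i$-balls of fixed radius, whose measure for the fixed normalized measure $\mu$ is bounded below by Bishop--Gromov (Lemma~\ref{lem:interval-test-functions}).
\end{itemize}

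The paper also handles rational rays directly by Riemann--Roch (Lemma~\ref{lem:rational-ray-nonrigid}). For irrational rays it uses the marked-period argument (Theorem~\ref{thm:dimension-exclusion}) to reduce to an interval limit before choosing the one-variable bump $h$. Your sketch contains none of these ingredients, so the implication (i)$\Rightarrow$(ii) remains unproved as written.
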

	
	The implication \textup{(iii)}$\Rightarrow$\textup{(i)} follows from
	a quantitative estimate for potentials of positive currents, and  \textup{(ii)}$\Rightarrow$\textup{(iii)} is obvious. 
	For \textup{(i)}$\Rightarrow$\textup{(ii)}, if the diameters did not converge to zero,the regular fibration structures would produce
	two distinct positive currents in $\alpha$, contradicting rigidity.

	Our second result uses the fixed marking, which is not visible in an
	unmarked Gromov--Hausdorff compactification.
	
	\begin{theorem}
		\label{thm:dimension-exclusion}
		Let $X$ and $\alpha$ be as in Theorem~\ref{thm:equivalence}, and assume
		that $\R\alpha\cap H^2(X,\Q)=\{0\}$.
		Let $\beta_i\in\Kah_X$ converge to $\alpha$, put
		$g_i=g_{\CY(\beta_i)}$, and suppose $\inf_i\diam(X,g_i)>0$.
		Then the Gromov--Hausdorff limit of $(X,g_i)$ is an 1-dimensional intervel.

	\end{theorem}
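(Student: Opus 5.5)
We normalize $g_i$ so that $\diam(X,g_i)$ stays between two positive constants; the diameters are already uniformly bounded above by \cite{Tosatti} and bounded below by hypothesis. Since $\beta_i\to\alpha$ with $\alpha^2=0$, the volumes $\beta_i^2\to 0$, so the sequence collapses. By Gromov compactness we pass to a subsequence converging to a compact limit $(Z,d_Z)$ that is not a point. The classification of collapsed limits of Ricci-flat K3 surfaces with bounded diameter (Sun--Zhang \cite{SunZhang}, completed in \cite{Ouyang25}) gives only three options: $\dim Z\in\{1,2,3\}$. In dimension $3$, $Z$ is a flat orbifold $T^3/\Gamma$. In dimension $2$, $Z$ is a singular flat or special-Kähler $S^2$ with an elliptic-type fibration on the regular part. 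In dimension $1$, $Z$ is a segment. Proving the theorem therefore amounts to excluding $\dim Z=3$ and $\dim Z=2$ when the ray $\R\alpha$ is irrational. Since the theorem is stated for the whole sequence, we also note that every subsequential limit is then a segment.

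To exclude these cases we use the fixed marking through the period asymptotics of Ouyang--Tian \cite{OuyangTian}. On the regular region, the Cheeger--Fukaya--Gromov $N$-structure gives a torus fibration: a $T^1$ fibration over $Z$ in the $3$-dimensional case, and a $T^2$ fibration in the $2$-dimensional case. In both cases the collapsing fibers carry integral homology classes, and \cite{OuyangTian} relates the normalized Kähler class $[\omega_i]/\sqrt{\beta_i^2}$, or its scaled limit, to these fiber cycles. When $\dim Z=2$, the fibers are $T^2$'s of area tending to zero. Their homology class $F\in H^2(X,\Z)$ is eventually fixed after passing to a subsequence, because the fibrations become topologically stable and the marking is fixed. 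We then aim to show that the rescaled limit of $\beta_i$ is proportional to $F$. The mechanism is the following. The form $\omega_i$ restricted to fibers has integral $\beta_i\cdot F\to0$. Meanwhile $\omega_i$ is approximately a semi-flat form, so its cohomology class is approximately $c\,F^{\vee}+(\text{small})$, where the dual pairing is computed with respect to the base. Combined with $\beta_i^2\to0$ and the nondegeneracy of the intersection form, this forces $\alpha\in\R_{>0}F$. Since $F$ is rational, this contradicts irrationality.

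The $3$-dimensional case works the same way with $S^1$ fibers. Here the $2$-cycles are the $S^1$-bundles over closed curves in $T^3/\Gamma$, and they give rational classes. The statement we want is that $\alpha$ lies in the rational span of a rank-$\le 3$ lattice determined by the flat structure. Since the diameter is bounded below in all three directions, the period asymptotics should force $\alpha$ to be a limit of $\beta_i$ whose pairings with all integral classes except those in a rational isotropic direction stay of order one. Making this precise again forces $\alpha$ onto a rational ray. I expect this step to be the main obstacle: it requires a quantitative comparison between the cohomology class $\beta_i$ and the geometric flat limit, and this is exactly where the marking enters. The plan is to take the precise form from \cite{OuyangTian}, namely that the hyperkähler triple $(\omega_i,\mathrm{Re}\,\Omega_i,\mathrm{Im}\,\Omega_i)$ has normalized periods converging to a configuration determined by $\dim Z$, and then show that in the $2$- and $3$-dimensional cases the isotropic limit direction is spanned by a rational class.

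Finally, once the $2$- and $3$-dimensional cases are excluded, the classification gives that $Z$ is $1$-dimensional. The only compact $1$-dimensional collapsed limits here are intervals, since a circle is ruled out because the Ricci-flat K3 case yields orbifold points at the ends. Because this holds for every subsequence, the whole sequence has interval limits, as claimed.
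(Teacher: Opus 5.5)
There is a genuine gap. Your reduction to excluding $\dim Z\in\{2,3\}$ via Sun--Zhang matches the paper, and so does the idea of using collapsing torus cycles as integral isotropic classes. But the step that carries all the weight is only asserted.

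\textbf{The gap.} You claim the fiber class $F$ is eventually constant because ``the fibrations become topologically stable and the marking is fixed.'' That does not follow.
\begin{itemize}
\item In dimension two, the Ouyang--Tian elliptic fibrations are holomorphic only for an $i$-dependent hyperk\"ahler rotation of the complex structure.
\item In dimension three, the tori are preimages of circles under $i$-dependent collapsing fibrations.
\end{itemize}
So their classes $e_i\in\Lambda$ may a priori run off to infinity. This drift is exactly the danger. An irrational isotropic ray is a limit of rational isotropic rays, and qualitative information such as $\beta_i\cdot e_i\to0$ or ``fiber area $\to0$'' is compatible with $e_i\to\infty$ and $\R e_i\to\R\alpha$.

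Even granting $e_i=F$ constant, you only use the $\omega_i$-period. Concluding $\alpha\in\R F$ also needs $F\perp[\Omega]$, i.e.\ $F$ lies in the Lorentzian space $N=H^\perp$. This requires the holomorphic periods of the tori to be $o(\sqrt{V_i})$, because $\sqrt{V_i}$ is the size of the $\Omega$-part of the hyperk\"ahler triple. The semi-flat heuristic $[\omega_i]\approx cF^\vee$ supplies neither point, and you leave the three-dimensional case open.

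\textbf{The missing ingredient is quantitative.} The paper argues as follows.
\begin{itemize}
\item From Ouyang--Tian it takes embedded tori $T_i$ with primitive isotropic classes $e_i$ and all three periods bounded by $CV_i$. In dimension two this holds because the base area $4V_i/\mu_i$ stays bounded below. In dimension three it holds because the circle period satisfies $\varepsilon_i\asymp V_i$.
\item This gives $\|\operatorname{pr}_{P_i}e_i\|_q\le C\sqrt{q_i}$ for the period plane $P_i=H\oplus\R\beta_i$.
\item It compares Hodge norms through a Lorentz boost: $\|v\|_{P_*}\le Cq_i^{-1/2}\|v\|_{P_i}$ for a fixed reference plane $P_*$.
\item Since $\|e\|_P^2=2\|\operatorname{pr}_Pe\|_q^2$ for isotropic $e$, the $e_i$ lie in a fixed ball of $\Lambda$. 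Hence $e_i=e$ along a subsequence.
\item Then $\operatorname{pr}_He=0$ and $q(e,\alpha)=0$. Orthogonal isotropic vectors in a Lorentzian space are proportional, so $\R\alpha$ is rational, a contradiction.
\end{itemize}
The rate $O(V_i)$ is exactly what offsets the boost factor $q_i^{-1/2}$. No qualitative smallness of the fibers can replace it, and this is precisely where the fixed marking enters.
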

	
	In particular, the Gromov--Hausdorff limit of this sequence can not
	has Hausdorff dimension two or three. 
	
	For radial degenerations, Theorem~\ref{thm:dimension-exclusion} yield the
	following dichotomy.
	
	\begin{corollary}
		\label{cor:dichotomy}
		Suppose that the ray of $\alpha$ is irrational, and let
		$\kappa\in\Kah_X$.
		If $\alpha$ is rigid, then
		\[
		\diam\bigl(X,\CY(\alpha+t\kappa)\bigr)\longrightarrow0
		\qquad\text{as }t\to0^+.
		\]
		If $\alpha$ is nonrigid, the diameters are bounded away from zero,
		and every sequence $t_i\to0^+$ has a subsequence along which
		$(X,\CY(\alpha+t_i\kappa))$ converges in the Gromov--Hausdorff sense
		to a one-dimensional interval.
	\end{corollary}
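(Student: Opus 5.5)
The corollary is a direct combination of Theorem~\ref{thm:equivalence} and Theorem~\ref{thm:dimension-exclusion}, applied to the radial sequences $\beta=\alpha+t\kappa$. The plan is to treat the rigid and nonrigid cases separately, using only the equivalence of (i), (ii), (iii) together with the irrationality hypothesis.

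\emph{Rigid case.} Suppose $\alpha$ is rigid. By the implication (i)$\Rightarrow$(ii) of Theorem~\ref{thm:equivalence}, $D(\beta_i)\to0$ for every sequence $\beta_i\in\Kah_X$ with $\beta_i\to\alpha$. For $t>0$ the class $\alpha+t\kappa$ is Kähler, since it is nef plus Kähler, and it tends to $\alpha$ as $t\to0^+$. Take any sequence $t_i\to0^+$ and set $\beta_i=\alpha+t_i\kappa$. Then $D(\alpha+t_i\kappa)\to0$. Since the sequence $t_i$ was arbitrary, $\diam(X,\CY(\alpha+t\kappa))\to0$ as $t\to0^+$.

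\emph{Nonrigid case, diameter bound.} Suppose $\alpha$ is nonrigid. We first show that $\liminf_{t\to0^+}D(\alpha+t\kappa)>0$. If not, there is a sequence $t_i\to0^+$ with $D(\alpha+t_i\kappa)\to0$. This falls short of statement (iii) of Theorem~\ref{thm:equivalence}, which asks for convergence along the whole continuous limit. So the key step is to justify the sequential version of (iii)$\Rightarrow$(i). I would inspect the argument described after Theorem~\ref{thm:equivalence}: it derives rigidity from a quantitative potential estimate, and that estimate only needs a sequence of classes converging to $\alpha$ whose diameters tend to zero. Alternatively, I would argue directly. If two distinct closed positive currents lie in $\alpha$, the quantitative estimate forces the diameters along any approximating sequence to stay bounded below. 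Either way, nonrigidity gives a contradiction, so there are $c>0$ and $t_0>0$ with $D(\alpha+t\kappa)\ge c$ for $0<t<t_0$. I expect this sequential upgrade of (iii)$\Rightarrow$(i) to be the only point needing care.

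\emph{Nonrigid case, the limit.} Now let $t_i\to0^+$ be arbitrary and put $\beta_i=\alpha+t_i\kappa$. Discarding finitely many terms, we have $\inf_i\diam(X,g_i)\ge c>0$. The diameters are also uniformly bounded above, by the bound cited from \cite{Tosatti}, and the metrics are Ricci-flat. Gromov's precompactness theorem therefore yields a Gromov--Hausdorff convergent subsequence. That subsequence satisfies every hypothesis of Theorem~\ref{thm:dimension-exclusion}: the ray of $\alpha$ is irrational, $\beta_i\to\alpha$, and the diameters are bounded below. Hence its Gromov--Hausdorff limit is a one-dimensional interval, which completes the dichotomy.
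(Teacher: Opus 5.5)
Your architecture matches the paper's proof. The rigid case is (i)$\Rightarrow$(ii) of Theorem~\ref{thm:equivalence}. In the nonrigid case, a lower diameter bound, the uniform upper bound and Gromov precompactness let Theorem~\ref{thm:dimension-exclusion} identify every subsequential limit as an interval. You are also right that the contrapositive of (iii)$\Rightarrow$(i) only gives $\limsup_{t\to0^+}D(\alpha+t\kappa)>0$, so a sequential statement is needed.

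However, the justification you give for exactly this step is false as stated. The potential estimate does \emph{not} show either of the following:
\begin{itemize}
\item that a single sequence $\beta_i\to\alpha$ with $D(\beta_i)\to0$ forces rigidity;
\item that diameters stay bounded below along every approximating sequence of a nonrigid class.
\end{itemize}
Remark~\ref{rem:tangential} recalls the Filip--Tosatti example of a nonrigid class with K\"ahler classes $\beta_i\to\alpha$ whose Ricci-flat diameters tend to $0$. That class is rational, but the estimate does not see rationality of the ray, so it cannot prove your general claim. Concretely, Proposition~\ref{prop:general-green-rigidity} gives, for distinct positive currents $T_0,T_1$ in $\alpha$,
\[
0<d_\mu(T_0,T_1)\le C\,D(\beta)^2\,\frac{\alpha\cdot\beta}{\beta^2}.
\]
Both $\alpha\cdot\beta$ and $\beta^2$ tend to $0$ as $\beta\to\alpha$. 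Their ratio can therefore blow up along tangential sequences, for instance those produced by a parabolic automorphism, and then no diameter bound follows.

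The missing ingredient is control of this cohomological factor. This is where the radial form of your sequence must be used: for $\beta=\alpha+t\kappa$,
\[
\frac{\alpha\cdot\beta}{\beta^2}=\frac{\alpha\cdot\kappa}{2\,\alpha\cdot\kappa+t\,\kappa^2}\le\frac12 .
\]
More generally, Lemma~\ref{lem:positive-approach-ratio} bounds the factor by $1$ whenever $\beta-\alpha$ is K\"ahler. This yields $D(\alpha+t\kappa)^2\ge d_\mu(T_0,T_1)/C$ for \emph{every} $t>0$. That is Corollary~\ref{cor:radial-green}, which is how the paper obtains the lower bound.

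Equivalently, Theorem~\ref{thm:point-collapse-general-CY} is the sequential form of (iii)$\Rightarrow$(i) you want. It is stated for sequences $\alpha+\kappa_i$ with $\kappa_i$ K\"ahler, or at least with the factor above bounded, not for arbitrary $\beta_i\to\alpha$.

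Since the sequence you actually use is radial, your conclusion stands once the step is justified this way. The rest of your argument agrees with the paper's. The paper additionally quotes Sun--Zhang to place the limit dimension in $\{1,2,3\}$ before Theorem~\ref{thm:dimension-exclusion} excludes $2$ and $3$.
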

	
	Tosatti conjectured that every nef isotropic K3 class with irrational
	ray is rigid and that all approaching Ricci-flat metrics collapse
	to a point \cite[Conjectures~3.9 and~4.2]{Tosatti}.
	Theorem~\ref{thm:equivalence} shows that these conjectures are
	equivalent.  They are, however, false in this generality.
	
	\begin{theorem}
		\label{thm:kummer}
		There exists a Kummer K3 surface $X$ carrying a nonzero nef class
		$\alpha\in\partial\Kah_X$ such that
		\[
		\alpha^2=0,\qquad
		\R\alpha\cap H^2(X,\Q)=\{0\},
		\]
		with the following properties:
		\begin{enumerate}[label=\textup{(\alph*)}]
			\item $\alpha$ is nonrigid;
			\item there is a constant $c_0>0$ such that
			\[
			\diam\bigl(X,\CY(\alpha+\kappa)\bigr)\ge c_0
			\qquad\text{for every }\kappa\in\Kah_X;
			\]
			\item for every sequence $\kappa_i\in\Kah_X$ satisfying
			$\kappa_i\to0$, put $\omega_i=\CY(\alpha+\kappa_i)$ and
			$D_i=\diam(X,\omega_i)$.  Then
			\[
			\left(X,D_i^{-2}\omega_i\right)
			\xrightarrow[i\to\infty]{\mathrm{GH}}[0,1].
			\]
		\end{enumerate}
	\end{theorem}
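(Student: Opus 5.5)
The construction is the heart of the matter. I would take $X=\mathrm{Km}(A)$ with $A=E_1\times E_2$ a product of non-isogenous elliptic curves; then $\NS(A)$ has rank two, spanned by the fibre classes $f_1=[E_1\times\{pt\}]$ and $f_2=[\{pt\}\times E_2]$ with $f_1\cdot f_2=1$. On $A$ the nef classes $a f_1+b f_2$ with $a,b\ge0$ are isotropic exactly when $ab=0$, so they are all rational. To obtain an irrational ray I would therefore use transcendental $(1,1)$-classes. After a suitable choice of complex structure (for instance $E_1,E_2$ isogenous with CM, so that $\rho(A)=4$ and $\NS(A)$ has an indefinite part with irrational isotropic directions), there is a real $(1,1)$-class $\alpha_A=\ell_1+\lambda\ell_2$ on $A$ with $\alpha_A^2=0$, irrational ray, and represented by a constant semipositive form $\eta=i\,\theta\wedge\bar\theta$, where $\theta$ is a constant $(1,0)$-form.

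Such an $\alpha_A$ is nonrigid. Every translate-average of $\eta$ equals $\eta$. Moreover, the foliation $\ker\theta$ is a linear foliation; with irrational slope its leaves are dense, and it has a one-parameter family of transverse measures coming from $\theta$-directions only. Hence for any function $u$ depending only on $\mathrm{Re}\,\int\theta$ (modulo the lattice), the current $\eta+i\partial\bar\partial u$ is positive. One has to choose the isogeny data so that the leaf closures are real $3$-tori, i.e.\ $\mathrm{Re}\,\theta$ has rational periods while $\mathrm{Im}\,\theta$ does not. Then $u$ can be a nonconstant convex function of a circle coordinate $s\in S^1$, and $\eta_u=\eta+\ddc u(s)\ge0$ produces distinct closed positive currents.

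Descending to $X$, I would take $\alpha=\pi_*\alpha_A$ on the Kummer surface. Its square is $2\alpha_A^2=0$, its ray is irrational, and $\alpha$ is nef because $\alpha\cdot E_j=0$ for the sixteen exceptional curves, with $\alpha$ positive on a dense set of curves. Pulling the currents $\eta_u$ (with $u$ chosen $(-1)$-invariant) back to the blow-up and pushing them down gives two distinct positive currents in $\alpha$. This proves (a).

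Parts (b) and (c) would then follow from the general theory. By Theorem~\ref{thm:equivalence}, nonrigidity gives a sequence $\beta_i\to\alpha$ along which the diameters do not tend to zero. To get the uniform bound (b) for every $\kappa$, I would argue by contradiction: if $\diam(X,\CY(\alpha+\kappa_i))\to0$ along some sequence $\kappa_i\to0$, then the quantitative potential estimate behind (iii)$\Rightarrow$(i) forces rigidity. For $\kappa$ bounded away from zero, the diameter is bounded below by continuity and compactness of the set of $\kappa$ with $|\kappa|\ge\varepsilon$ modulo the large-$\kappa$ regime, where diameters grow. Part (c) then follows from Theorem~\ref{thm:dimension-exclusion}: every subsequence has a further subsequence converging, after rescaling by $D_i^{-2}$, to an interval of diameter one, so the full sequence converges to $[0,1]$.

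The main obstacle is the construction step. I need an isotropic irrational class that is nef on $X$ and whose foliation has \emph{non-dense} leaves, i.e.\ closures of real codimension one, since otherwise the class is rigid by Filip--Tosatti/SSV. The step I would check first is whether the period conditions can be met: $\mathrm{Re}\,\theta$ should have rational periods while $[\eta]$ has an irrational ray. Concretely, I would take $\theta=dz_1+\tau\,dz_2$ on $E\times E$ with $E$ CM, and choose $\tau$ real irrational so that the real part is a rational projection.
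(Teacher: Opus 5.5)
\textbf{There is a genuine gap, at exactly the step you flagged: your proposed realization gives a rigid class, and the framework you suggest cannot give an example at all.}

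Your concrete choice fails. Take $A=E\times E$ with $E=\C/\Z[i]$ and $\theta=dz_1+\tau\,dz_2$, $\tau\in\R\setminus\Q$. On the generators $(1,0),(i,0),(0,1),(0,i)$, the periods of $a\,\mathrm{Re}\,\theta+b\,\mathrm{Im}\,\theta$ are $(a,b,a\tau,b\tau)$. This is never a real multiple of a rational vector unless $a=b=0$. So no character $A\to\R/\Z$ is constant on the leaves of $\ker\theta$, the leaves are dense, and $[i\theta\wedge\bar\theta]$ is rigid by Proposition~\ref{prop:torus}.

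This is structural, not a bad choice of $\tau$. Non-dense leaves means that, after replacing $\theta$ by $c\theta$ with $c\in\C^*$ (which only rescales $\alpha_A$), the form $\xi=\mathrm{Re}\,\theta$ has rational periods. Then $W_\xi=\xi\wedge H^1(A,\R)$ is a rational subspace of $H^2(A,\R)$. For any constant $(1,0)$-form $\phi$, the $(2,0)$-part of $\xi\wedge\mathrm{Re}\,\phi$ is $\tfrac14\theta\wedge\phi$, which gives $W_\xi\cap H^{1,1}(A,\R)=\R\alpha_A$. If $\alpha_A\in\NS(A)_\R$, then $\R\alpha_A=W_\xi\cap\NS(A)_\R$ is an intersection of rational subspaces, so the ray is rational. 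In particular, when $\rho(A)=4$ (CM products, where $H^{1,1}(A,\R)=\NS(A)_\R$), every nonrigid isotropic nef class on $A$ is rational. So the class $\alpha_A=\ell_1+\lambda\ell_2$ in $\NS(A)$ that you want cannot exist.

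The missing idea is that $\alpha_A$ must be transcendental on $A$. The paper uses the lattice spanned by $(1,0),(\theta,1),(i,0),(0,i)$. There $dy_1$ has periods $(0,0,1,0)$, so $y_1\colon A\to\R/\Z$ is a homomorphism, while $[dx_1\wedge dy_1]$ has periods $(0,1,0,\theta,0,0)$. Your first instinct, a transcendental class on a product, also works. On $\C/(\Z+iy_1\Z)\times\C/(\Z+iy_2\Z)$, the form $\theta=dz_1-\tfrac{i}{y_2}dz_2$ has $\mathrm{Re}\,\theta$ with periods $(1,0,0,1)$, and the ray of $[i\theta\wedge\bar\theta]$ is irrational whenever $y_1y_2\notin\Q$.

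\textbf{Smaller points.}
\begin{itemize}
\item A convex function on a circle is constant, so ``convex'' is the wrong condition. One has $\ddc u(s)=c\,u''(s)\,\eta$ for a fixed $c>0$, so any $u$ with $u''\ge-1/c$ works. This is the paper's $h(y_1)\alpha_A$ with $h\ge0$ and $\int h=1$.
\item Nefness does not follow from ``positive on a dense set of curves''. A clean fix: $s\colon A\to\R/\Z$ is a homomorphism, so the sixteen $2$-torsion points have $s\in\tfrac12\Z$. Choose $u$ even with $1+c\,u''=0$ near $\tfrac12\Z$. The representative then vanishes near the fixed points and descends to a smooth semipositive form on $X$.
\item For (b), the set $\{\kappa\in\Kah_X:|\kappa|\ge\varepsilon\}$ is neither compact nor bounded away from $\partial\Kah_X$, and you have not shown that diameters grow for large $\kappa$. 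None of this is needed. Proposition~\ref{prop:general-green-rigidity} together with Lemma~\ref{lem:positive-approach-ratio} gives $d_\mu(T_0,T_1)\le C_n\diam(X,\CY(\alpha+\kappa))^2$ for every K\"ahler $\kappa$ simultaneously; this is Corollary~\ref{cor:radial-green}.
\item Your argument for (c) matches the paper's.
\end{itemize}
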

	
	The lower bound follows from an explicit spectral estimate uniform
	over the whole K\"ahler cone of perturbations.  These examples raise
	the question of whether nonrigid irrational isotropic nef classes
	admit a systematic construction or classification, even on Kummer
	surfaces.
	
	We also construct rigid classes in the presence of walls of the
	K\"ahler cone.  Motivated by the arithmetic-dynamical arguments of
	Filip--Tosatti \cite{FilipTosatti} and
	Sibony--Soldatenkov--Verbitsky \cite{SSV}, we first work on an
	algebraic nef face of a projective K3 surface. Following the approach of \cite{FilipTosatti}, we prove the following
	criterion permits a prescribed negative-definite lattice of walls.
	
	\begin{theorem}
		\label{thm:newrigid}
		Let $X$ be a K3 surface with a primitive embedding
		$S:=N\oplus L\hookrightarrow\Kthree$ and an orthogonal decomposition
		\[
		\NS(X)_\R=N_\R\oplus L_\R,
		\]
		where $q|_{N_\R}$ is negative definite and $L_\R$ has signature
		$(1,n)$ for some $n\ge2$.  Let $\C_L$ be a connected component of
		$\{x\in L_\R:x^2>0\}$.  Suppose that:
		\begin{enumerate}[label=\textup{(\Alph*)}]
			\item $\Nef(X)\cap N_\R^\perp=\overline{\C_L}$;
			\item there is a subgroup $\Gamma<\Aut(X)$ which fixes $N_\R$
			pointwise and whose image in
			\[
			SO^+(L_\R):=\{g\in SO(L,\mathbb R):g(\C_L)=\C_L\}
			\]
			is a uniform lattice.
		\end{enumerate}
		Then every nonzero irrational isotropic class
		$\alpha\in\overline{\C_L}$ is rigid, and its unique positive current
		has bounded potential.
	\end{theorem}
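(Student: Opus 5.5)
The plan is to follow the Filip--Tosatti strategy: build, from the lattice action, a canonical positive current in $\alpha$ with continuous (indeed Hölder) potential. Then use the uniform lattice property to show that every positive current in $\alpha$ must coincide with it.

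\emph{Step 1: uniform estimates on the cone.} By (A), every class in the interior $\C_L$ is nef and big, with $N_\R$ orthogonal to it. By (B), fix a compact fundamental domain $K\subset\C_L\cap\{x^2=1\}$ for the action of $\Gamma$ on the hyperboloid. Each class in $K$ is big and nef, and since $K$ is compact, the singular Ricci-flat metrics $\omega_x$ (of Eyssidieux--Guedj--Zeriahi type) have potentials bounded uniformly in $x\in K$. Although the classes in $K$ may be only semipositive along $(-2)$-curves in $N$, Kołodziej's estimate still gives a uniform $L^\infty$ bound, and the dependence on $x$ is continuous. Transporting by $\gamma\in\Gamma$ gives $\gamma^*\omega_x=\omega_{\gamma^*x}$. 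Since $\Gamma$ fixes $N_\R$ pointwise, the null loci stay inside $N$, so the normalization is consistent along orbits.

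\emph{Step 2: constructing the current in $\alpha$.} Let $\alpha$ be an irrational isotropic class. Because $\Gamma$ is a uniform lattice in $SO^+(L_\R)$, its limit set is all of $\partial\C_L$, and every boundary point is conical. So we can pick $\gamma_k\in\Gamma$ and $x_k\in K$ with $\lambda_k\gamma_k^*x_k\to\alpha$, where $\lambda_k\to0$ and the convergence is exponential in the hyperbolic distance. I would then show that the currents $\lambda_k\gamma_k^*\omega_{x_k}$ form a Cauchy sequence in the sense of potentials. The idea is to write consecutive differences as the image, under pullback by a group element, of a smooth difference of forms. Contraction estimates in the style of Cantat and Filip--Tosatti bound the pulled-back potentials by $C e^{-\delta d_k}$, and these bounds sum. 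The limit $T_\alpha$ then has bounded, in fact continuous, potential.

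\emph{Step 3: rigidity.} Let $T$ be any closed positive current in $\alpha$, and write $T=T_\alpha+\ddc\phi$. The goal is to show that $\phi$ is constant. Pulling back by $\gamma_k^{-1}$ and rescaling by $\lambda_k^{-1}$ moves the class back near the compact set $K$, where the masses are bounded; the difference of potentials is then $\lambda_k^{-1}(\gamma_k^{-1})^*\phi$. The argument should rely on the irrationality of $\alpha$ and the fact that $\alpha\cdot C>0$ for every curve not in $N$, which shows that the currents carry no Lelong numbers. An energy or $L^1$ estimate should then show that the normalized potentials tend to zero, forcing $\phi$ to be constant.

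I expect the main obstacle to be this last step: uniform control of arbitrary positive currents under the hyperbolic dynamics when walls from $N$ are present, since the classes in $K$ are not Kähler. My first attempt would be to prove that $\alpha^\perp\cap\NS$ contains no classes of curves, using the irrationality of $\alpha$ and $N\perp\alpha$. I would then apply the Filip--Tosatti argument on the contraction $X\to X'$ of the $N$-curves, whose classes in $L$ become Kähler on the orbifold $X'$.
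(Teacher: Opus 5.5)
\textbf{There is a genuine gap in Step 3.} Your Steps 1--2 bear only on the bounded-potential clause. Rigidity rests entirely on Step 3, and there the uniform control you need is never produced; the version you sketch is also misaimed, in three ways.

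First, the geometric claim is false as stated. $(\gamma_k^{-1})^*$ and positive rescalings preserve $x^2=0$, so $\lambda_k^{-1}(\gamma_k^{-1})^*\alpha$ stays on the null cone and cannot approach $K\subset\{x^2=1\}$. What a conical choice of $\gamma_k$ actually gives is that a suitable rescaling of $(\gamma_k^{-1})^*\alpha$ lies in a compact set of nonzero null classes.

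Second, the estimate runs the wrong way. Since $\mu$ is $\Aut(X)$-invariant, the rescaled difference $\lambda_k^{-1}\,\phi\circ\gamma_k^{-1}$ has $L^1(\mu)$-norm exactly $\lambda_k^{-1}\|\phi\|_{L^1(\mu)}$. This blows up unless $\phi$ is constant. What forces $\phi$ to be constant is that this quantity must stay \emph{bounded}, not that it tends to zero.

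Third, that bound is soft. Lelong numbers, energy estimates, irrationality and the contraction $X\to X'$ are beside the point. (The first step of that reduction is also misstated, since $\alpha^\perp\cap\NS(X)\supset N$ contains the classes of the $N$-curves.) Let $\mathcal C$ be a compact set of nef classes, and let $\theta_\beta\le A\omega_0$ be smooth representatives depending continuously on $\beta\in\mathcal C$. Every current in $\cP(\beta)$ is $\theta_\beta+\ddc\varphi$ with $\varphi$ an $A\omega_0$-psh function. $L^1$-compactness of $\sup$-normalized $A\omega_0$-psh functions then bounds the $L^1(\mu)$-diameter of $\cP(\beta)$ uniformly on $\mathcal C$. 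Whether the classes are K\"ahler is irrelevant. The walls from $N$ enter only through (A), which makes the $\Gamma$-orbit of $\alpha$ consist of nef classes.

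\textbf{The missing tool and how the paper uses it.} This soft bound is exactly what the Sibony--Soldatenkov--Verbitsky diameter $d(\cdot)$ of Lemma~\ref{thm:ssvdiam} packages. It vanishes exactly on rigid classes, is $1$-homogeneous, is $\Aut(X)$-invariant, and is upper semicontinuous, hence bounded on compact sets. The paper uses it as follows.
\begin{enumerate}
\item For a hyperbolic $f\in\Gamma$ with expanding eigenvector $\alpha_+$, the identity $d(\alpha_+)=d(f^*\alpha_+)=\lambda\,d(\alpha_+)$ with $\lambda>1$ forces $d(\alpha_+)=0$.
\item Ratner's theorem (Lemma~\ref{lem:ratner}) gives $\gamma_j\in\Gamma$ with $\gamma_j^*\alpha\to\alpha_+$ on the null cone itself, radial scale included.
\item Upper semicontinuity yields $d(\alpha)=\limsup_j d(\gamma_j^*\alpha)\le d(\alpha_+)=0$.
\end{enumerate}

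\textbf{Your own setup also closes once $d$ is available,} even without Ratner or an eigenvector. Fix $o\in\C_L$ with $o^2=1$. Cocompactness gives $\gamma_k\in\Gamma$ with $\gamma_k^*o$ within a fixed distance $R$ of the point at distance $t_k\to\infty$ on the geodesic ray from $o$ towards $\R_{>0}\alpha$. Since $\log q(\alpha,\cdot)$ is $1$-Lipschitz on the hyperboloid and decreases at unit rate along that ray,
$c_k:=q\bigl((\gamma_k^{-1})^*\alpha,o\bigr)=q(\alpha,\gamma_k^*o)\le e^{R-t_k}q(\alpha,o)\to0$.
The classes $\beta_k:=c_k^{-1}(\gamma_k^{-1})^*\alpha$ lie in the compact set $\{\beta\in\partial\C_L:q(\beta,o)=1\}$ of nef classes. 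Letting $M<\infty$ be the supremum of $d$ on that set, $d(\alpha)=c_k\,d(\beta_k)\le c_kM\to0$.

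As for the bounded-potential clause, the paper's sketch addresses only rigidity and refers to Filip--Tosatti for the rest. Your Step 2 is a reasonable outline of that construction, but it remains heuristic.
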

	
	Finally, in Section~\ref{sec:k3-lattice-realization}, we use lattice constructions to realize its hypotheses with nontrivial ADE
	configurations and yield new explicit families of K3 surfaces carrying
	rigid classes on these nef faces.

	\section*{Declaration of AI use}
	During the preparation of this manuscript, the authors used OpenAI's ChatGPT to assist in filling in technical details and polishing the exposition. The main questions and theorems of this paper were formulated by the authors. AI was used to assist with the proofs of Proposition~\ref{prop:general-green-rigidity} and Lemma~\ref{lem:arithmetic}. The idea of using an anisotropic lattice, which is essential for our construction, in Section~\ref{sec:k3-lattice-realization} was due to AI.
	All AI-generated suggestions were reviewed, verified, and revised by the authors. 
	
	\section{Background and conventions}\label{sec:background}
	
	In this section, we recall the background material used in this paper and
	fix our conventions for lattices and cones. 
	
	\subsection{Quadratic lattices, roots}
	\label{subsec:lattice-background}
	
	We first recall standard terminology for quadratic lattices, following Nikulin~\cite{Nikulin}.
	
	\begin{definition}
		An \emph{integral lattice} $K$ is a free abelian group of finite rank
		equipped with a nondegenerate symmetric bilinear form
		$$
		(\ ,\ )_K:K\times K\longrightarrow\Z.
		$$
		The lattice is \emph{even} if $(x,x)_K\in2\Z$ for all $x\in K$.
		The signature of $K$ is the signature of the real bilinear form on
		$K_\R=K\otimes\R$.  A real quadratic space of
		signature $(1,n)$ is called \emph{Lorentzian}.
	\end{definition}
	
	\begin{definition}
		The dual lattice is
		$$
		K^\vee=\{x\in K_\Q:(x,K)\subset\Z\},
		$$
		and the finite group $A_K=K^\vee/K$ is the \emph{discriminant group}.
		If $K$ is even, its quadratic form descends to a finite quadratic form
		$$q_K:A_K\to\Q/2\Z.$$  The lattice is \emph{unimodular} when $A_K=0$.
		An embedding $K\hookrightarrow\Lambda$ is \emph{primitive} when
		$\Lambda/K$ is torsion free.  Finally,
		$$
		O(K)_{\mathrm{disc}}
		:=\ker\bigl(O(K)\longrightarrow O(A_K,q_K)\bigr)
		$$
		is the \emph{discriminant kernel}.  
	\end{definition}

	We now introduce the concepts of a root of an even lattice and an ADE lattice, which arise in several areas, including the theory of canonical singularities in birational geometry and the theory of simple Lie algebras.
	\begin{definition}\label{ADE}
		A \emph{root} of an even lattice $K$ is a vector
		$\delta\in K$ with $\delta^2=-2$. Its reflection is
		$$
		s_\delta(x)
		=x-\frac{2(x,\delta)}{\delta^2}\delta
		=x+(x,\delta)\delta.
		$$
		In particular, it fixes $\delta^\perp$ pointwise and sends $\delta$ to
		$-\delta$.
		An \emph{ADE lattice} in this paper means a nonzero orthogonal direct
		sum of the negative-definite root lattices
		$$
		A_n(-1)\ (n\ge1),\quad D_n(-1)\ (n\ge4),\quad
		E_6(-1),\ E_7(-1),\ E_8(-1).
		$$
		Equivalently, after choosing simple roots, its Gram matrix is the negative
		of a finite ADE Cartan matrix.  An \emph{ADE configuration} on a K3 surface
		is a collection of smooth rational curves with this intersection matrix.
	\end{definition}
	
	\subsection{K3 surfaces: lattices, Weyl chambers, periods, and Torelli}
	
	We next recall the standard lattice and cone theory of K3 surfaces. A \emph{K3 surface} is a compact complex surface $X$ with
	$K_X\simeq\mathcal O_X$ and $H^1(X,\mathcal O_X)=0$.  Its second integral cohomology, equipped with the cup
	product, is the even unimodular \emph{K3 lattice}
	$$
	H^2(X,\Z)\simeq
	\Kthree:=U^{\oplus3}\oplus E_8(-1)^{\oplus2},
	\qquad \operatorname{sign}(\Kthree)=(3,19),
	$$
	where $U$ is the hyperbolic plane with Gram matrix
	$\left(\begin{smallmatrix}0&1\\1&0\end{smallmatrix}\right)$.
	A \emph{marking} is a lattice isometry
	$\varphi:H^2(X,\Z)\xrightarrow{\sim}\Kthree$.
	The N\'eron-Severi and transcendental lattices are
	$$
	\NS(X)=H^{1,1}(X)\cap H^2(X,\Z),\qquad
	T(X)=\NS(X)^\perp.
	$$
	If $X$ is projective, the Hodge index theorem gives
	$\operatorname{sign}\NS(X)=(1,\rho(X)-1)$.  Conversely, a K3 surface is
	projective if and only if $\NS(X)$ contains a class of positive square.
	See \cite[Chapters~1--3]{HuybrechtsK3}.
	
	Let $X$ be projective.  Its \emph{positive cone} $\Pos(X)$ is the
	component of
	\begin{equation}\label{eqn:positivecone}
		\{x\in\NS(X)_\R:x^2>0\}
	\end{equation}
	containing the ample cone.  The ample cone $\Amp(X)$ consists of
	classes of ample $\R$-divisors, and its closure is the algebraic nef cone
	$\Nef(X)$.  In $H^{1,1}(X,\R)$, the K\"ahler cone
	$\Kah_X$ is the open cone of K\"ahler classes.  For a projective K3
	surface, $\Amp(X)=\Kah_X\cap\NS(X)_\R$.

	\begin{theorem}\label{thm:weyl}
		Let $X$ be a projective K3 surface and
		$\Delta_X=\{\delta\in\NS(X):\delta^2=-2\}$.
		For $\delta\in\Delta_X$, let $s_\delta$ be the reflection introduced in Definition~\ref{ADE}. 
		
		\begin{enumerate}
			\item For every $\delta\in\Delta_X$, Riemann--Roch implies that
			$\delta$ or $-\delta$ is effective.
			\item The root hyperplanes $\delta^\perp$ decompose $\Pos(X)$ into
			chambers.  The Weyl group
			$$
			W_X=\langle s_\delta:\delta\in\Delta_X\rangle
			\subset O^+(\NS(X))
			$$
			acts simply transitively on these chambers.
			\item The ample cone is the chamber
			$$
			\Amp(X)=
			\{x\in\Pos(X):(x,C)>0
			\text{ for every smooth rational curve }C\subset X\}.
			$$
			Put
			$$\Nef(X):=\overline{\Amp(X)}.$$ Then it is a fundamental domain for the
			Weyl-group action on the positive cone.
		\end{enumerate}
	\end{theorem}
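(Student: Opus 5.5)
This is the classical Weyl chamber description of the ample cone of a projective K3 surface; see \cite[Chapter~8]{HuybrechtsK3}. We prove the three items in order. Each uses Riemann--Roch together with the fact that $\Pos(X)$ is a round cone in a Lorentzian space.

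For (1), take $\delta\in\Delta_X$ and let $L$ be the corresponding line bundle. Riemann--Roch on a K3 surface gives
\[
\chi(L)=\tfrac{\delta^2}{2}+2=1.
\]
Hence $h^0(L)+h^2(L)\ge1$. Serre duality gives $h^2(L)=h^0(L^{-1})$, so $\delta$ or $-\delta$ is effective.

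For (2), we first check local finiteness of the hyperplanes $\delta^\perp$ inside $\Pos(X)$. Fix a compact set $K\subset\Pos(X)$ and choose $h\in\Pos(X)$. On $h^\perp$ the form is negative definite. For $x\in K$ and a root $\delta$ with $(x,\delta)=0$, decompose $\delta$ along $x$ and $x^\perp$. Since $\delta^2=-2$, this gives a uniform bound on the norm of $\delta$. Because $\NS(X)$ is discrete, only finitely many roots have hyperplanes meeting $K$. Therefore the complement of the root hyperplanes splits $\Pos(X)$ into open convex chambers.

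Each reflection $s_\delta$ preserves $\NS(X)$, preserves $\Pos(X)$ (it fixes points of $\delta^\perp\cap\Pos(X)$), and permutes the roots. So $W_X$ permutes the chambers. Transitivity is the standard argument. Take a chamber $C_0$ with $h\in C_0$ and any chamber $C$ containing $y$. Among the elements $w\in W_X$, choose one minimizing $(h,wy)$; this minimum is attained because the set of values is discrete, by local finiteness. If $wy$ were not in $C_0$, some wall $\delta^\perp$ of $C_0$ with $(h,\delta)>0$ would satisfy $(wy,\delta)<0$. Then $(h,s_\delta wy)=(h,wy)+(wy,\delta)(\delta,h)<(h,wy)$, a contradiction.

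Simple transitivity (freeness) is the main obstacle. We plan to reduce it to the usual statement for reflection groups generated by the walls of a fundamental chamber. First show that $W_X$ is generated by the reflections in the walls of $C_0$, using the same minimization argument. Then show by the standard induction on word length that no nontrivial $w$ fixes $C_0$: if $w\ne1$, the word $w$ has a final wall crossing, so $wC_0$ lies on the opposite side of some root hyperplane from $C_0$. Equivalently, we can invoke the general theory of discrete reflection groups acting on hyperbolic space $\Pos(X)/\R_{>0}$ (Vinberg), for which the chamber is a strict fundamental domain.

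For (3), first suppose $x$ is ample. Then $(x,C)>0$ for every curve $C$, and in particular for every smooth rational curve. Conversely, suppose $x\in\Pos(X)$ and $(x,C)>0$ for every smooth rational curve $C$. Let $D$ be an irreducible curve. If $D^2\ge0$, then $D$ lies in $\overline{\Pos(X)}$, and $(x,D)>0$ because $x$ is in the interior of the positive cone. If $D^2<0$, adjunction gives $D^2=-2$ with $D$ a smooth rational curve, so $(x,D)>0$ by assumption. Together with $x^2>0$, the Nakai--Moishezon criterion (for $\R$-classes, via Campana--Peternell) shows that $x$ is ample.

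Thus $\Amp(X)$ is cut out by the hyperplanes $C^\perp$. These are root hyperplanes, and by (1) every root hyperplane equals $C'^\perp$ for a sum of curves, so no root hyperplane meets $\Amp(X)$. Hence $\Amp(X)$ is exactly one chamber. Simple transitivity from (2) then shows that its closure $\Nef(X)$ (intersected with $\Pos(X)$) is a fundamental domain for the $W_X$-action on $\Pos(X)$.
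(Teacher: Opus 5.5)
Your argument is the standard textbook proof (as in Huybrechts' K3 book, Ch.~8) of a result the paper only recalls with a citation and does not prove, so it matches the paper's approach and is correct in outline. Two small repairs are needed. First, the minimum of $(h,wy)$ is attained not ``by local finiteness'' (which alone does not give discreteness of the orbit values). It is attained because you may take $h,y\in\NS(X)$, which is possible since chambers are open cones, and then the values $(h,wy)$ lie in $\Z_{>0}$. Second, your ``final wall crossing'' sentence for freeness already presupposes the lemma relating word length to separating walls, so it is circular as written. For simple transitivity you should rely on the general theory of reflection groups that you mention (e.g.\ Bourbaki, Ch.~V, \S3).
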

	
	We now recall the two period results used to realize the lattice data
	geometrically. The period domain of marked K3 surfaces is
	$$
	\Omega_{\Kthree}
	=
	\{[\sigma]\in\mathbb P(\Kthree\otimes\mathbb C):
	\sigma^2=0,\;(\sigma,\bar\sigma)>0\}.
	$$
	The following global Torelli theorem is one of the fundamental theorems in the theory of K3 surfaces~\cite{BHPV}.
	\begin{theorem}\label{thm:torelli}
		\begin{enumerate}
			\item The period map of marked K3 surfaces is surjective.
			\item Let $X,X'$ be K3 surfaces and let
			$\psi:H^2(X',\Z)\to H^2(X,\Z)$ be a Hodge isometry.
			If $\psi$ maps a K\"ahler class of $X'$ to a K\"ahler class of
			$X$ (equivalently, it maps the K\"ahler cone onto the K\"ahler
			cone), then there is a unique isomorphism $f:X\to X'$ such that
			$f^*=\psi$.
		\end{enumerate}
	\end{theorem}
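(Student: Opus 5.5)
Since Theorem~\ref{thm:torelli} is classical, the plan is to follow the standard route of \cite{BHPV} rather than find a new argument. We work throughout on the (non-Hausdorff) moduli space $\mathcal M$ of marked K3 surfaces, with period map $\mathcal P:\mathcal M\to\Omega_{\Kthree}$. The first input is the local Torelli theorem: since $H^0(X,\Theta_X)=H^2(X,\Theta_X)=0$, the Kuranishi family is universal and smooth of dimension $20$. Moreover, $\Theta_X\cong\Omega^1_X$ via the holomorphic $2$-form, so the differential of the period map is an isomorphism. Hence $\mathcal P$ is a local isomorphism.

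We prove part (2) first, by a density-and-limit argument.

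\begin{enumerate}
\item \emph{Kummer case.} For Kummer surfaces, we use the correspondence between $H^2(X,\Z)$ and $H^2$ of the underlying torus, together with the Torelli theorem for complex tori. A Hodge isometry that preserves effective and K\"ahler classes then comes from an isomorphism; this is the Piatetski-Shapiro--Shafarevich and Burns--Rapoport argument.
\item \emph{Density.} Periods of Kummer surfaces are dense in $\Omega_{\Kthree}$, because marked $2$-dimensional tori have dense periods in their own period domain.
\item \emph{Limit argument.} Take a Hodge isometry $\psi$ preserving K\"ahler classes, and deform $X$ and $X'$ compatibly over a small polydisc in the local universal families. On a dense set of parameters both fibres are Kummer with matching periods, so those fibres are isomorphic. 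Taking limits of the graphs of these isomorphisms (Bishop compactness, with volumes bounded by the K\"ahler classes) yields a cycle $Z=\Gamma+\sum C_i\times C_i'$ in $X\times X'$ with $[Z]_*=\psi$. Here $\Gamma$ is the graph of a bimeromorphic map, hence of an isomorphism, since K3 surfaces are minimal.
\item \emph{Removing the rational-curve terms.} The components $C_i\times C_i'$ come from rational curves. The hypothesis that $\psi$ maps a K\"ahler class to a K\"ahler class forces these components to vanish, so $f^*=\psi$. This uses the chamber structure of Theorem~\ref{thm:weyl} and the fact that $W_X$ acts simply transitively on chambers.
\end{enumerate}

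Uniqueness of $f$ follows because any automorphism acting trivially on $H^2$ is the identity, which can again be checked on Kummer surfaces and then propagated.

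For part (1), surjectivity, we use hyperk\"ahler structures:

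\begin{enumerate}
\item \emph{Twistor lines.} By Yau's theorem each K\"ahler class $\kappa$ gives a hyperk\"ahler metric. The associated twistor family maps onto the conic in $\Omega_{\Kthree}$ determined by the positive $3$-plane $\langle \kappa,\operatorname{Re}\sigma,\operatorname{Im}\sigma\rangle$.
\item \emph{Connectedness.} Any two points of $\Omega_{\Kthree}$ are joined by a chain of generic twistor lines.
\item \emph{Transfer.} Starting from one known K3 surface, for example a Kummer surface, we move along the chain. At each step we need a K\"ahler class whose $3$-plane contains the next target line. For very general periods ($\NS=0$) the K\"ahler cone is the whole positive cone, so such a class exists; a density argument then covers the remaining periods.
\end{enumerate}

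The main obstacle is the limit step in part (2), namely controlling the degenerate components of the limit cycle. The only leverage available is the condition on K\"ahler classes, so that is where we would combine Theorem~\ref{thm:weyl} with a careful analysis of how the rational curves $C_i$ and $C_i'$ act on cohomology.
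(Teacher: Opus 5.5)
The paper does not prove this theorem; it quotes it from \cite{BHPV}, and your outline follows that classical route. As written, however, two steps of part (2) do not go through.

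First, density of Kummer periods does not follow from tori having dense periods in their own period domain. Fix one primitive embedding of the Kummer lattice $K$ (the saturation of the span of the sixteen exceptional curves) into $\Kthree$. The corresponding Kummer periods lie in $\Omega_{\Kthree}\cap\mathbb P(K^\perp\otimes\C)$, which is only $4$-dimensional inside the $20$-dimensional $\Omega_{\Kthree}$. What you need is that the union of the $O(\Kthree)$-translates of this locus is dense, and that is a separate lattice-theoretic statement.

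Second, ``matching periods, so those fibres are isomorphic'' is too weak to give $[Z]_*=\psi$. A Hodge isometry between Kummer fibres only yields $f_s$ with $f_s^*=\pm w_s\circ\psi_s$ for some $w_s\in W_{X_s}$, where $\psi_s$ is the parallel transport of $\psi$. Unless $f_s^*=\psi_s$, you control neither $[Z]_*$ nor the volumes of the graphs. So you need a lemma that $\psi_s$ stays effective for all small $s$. This is not automatic, because Kummer fibres carry many $(-2)$-curves. The lemma can be proved as follows.
Take Kähler classes $\kappa'_s\to\kappa'$ on $X'_s$ and $\kappa_s\to\kappa=\psi(\kappa')$ on $X_s$, and put $\hat\kappa_s=\kappa_s/\sqrt{\kappa_s^2}$. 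A $(-2)$-class $\delta$ of type $(1,1)$ on $X_s$ has squared Hodge norm $2(\delta\cdot\hat\kappa_s)^2+2$ for the plane $\langle\kappa_s,\operatorname{Re}\sigma_s,\operatorname{Im}\sigma_s\rangle$. Hence, uniformly for $s$ near $0$, only finitely many such classes have bounded pairing with $\kappa_s$. Those occurring for $s$ arbitrarily close to $0$ are of type $(1,1)$ on $X$, so they pair nontrivially with the Kähler class $\kappa$. After shrinking the base, $|\delta\cdot\kappa_s|\ge c>0$ uniformly. Therefore the small perturbation $\psi_s(\kappa'_s)$ of $\kappa_s$ crosses no $(-2)$-wall.

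In part (1), openness of the image together with density does not give surjectivity, so a final ``density argument'' cannot reach the special periods. The correct ending is direct. Given $x\in\Omega_{\Kthree}$, choose a positive three-plane $W$ containing the plane of $x$ with $W^\perp\cap\Kthree=0$. Each nonzero integral class is orthogonal to the planes of at most two points of the twistor line $T_W=\Omega_{\Kthree}\cap\mathbb P(W\otimes\C)$. Hence only countably many points of $T_W$ have $\NS\neq0$, and $T_W$ contains a point $y$ with $\NS=0$. Your chain reaches $y$, provided every junction point of the chain is also chosen with $\NS=0$, so that the class in the next three-plane is $\pm$ Kähler there. At $y$ the class in $W$ is $\pm$ Kähler, and its twistor family realizes every point of $T_W$, including $x$.

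Your uniqueness argument and the Weyl-chamber mechanism for removing the components $C_i\times C_i'$ are fine as outlined.
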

	We briefly discuss how we will use the Torelli theorem, Theorem~\ref{thm:torelli}, and Theorem~\ref{thm:weyl} when constructing new examples of rigid currents on K3 surfaces containing $(-2)$-curves.
	Item~(1) of the Torelli theorem allows us to find a K3 surface $X$ from a chosen period and also to prescribe $\NS(X)$ as the integral
	$(1,1)$-lattice orthogonal to that period. Item~(2) allows us to realize a particular lattice isometry by an automorphism.
	An arbitrary Hodge isometry need not itself be induced by an
	isomorphism of $X$ until its action on K\"ahler chambers has been controlled. For this,
	changing a marking by a root
	reflection fixes the period but changes the marked chamber and
	Theorem~\ref{thm:weyl} shows that one can reach exactly one chosen chamber.

	
	\section{A flat torus model}
	\label{sec:flat-torus-model}
	
	We begin with the elementary flat model motivating the rigidity problem.
	Let $A=V/\Lambda$ be a complex torus of complex dimension $n$, and let
	$\omega$ be a nonzero semipositive real $(1,1)$-form on $A$, invariant under
	translations and of complex rank $r<n$.  Set
	\[
	\alpha=[\omega],
	\qquad
	K=\ker\omega\subset V,
	\]
	and let $H\subset A$ be the closure of the image of $K$.  Thus $H$ is a
	connected real subtorus, possibly not a complex subtorus.
	We first record the metric behavior of these flat degenerations.
	
	\begin{lemma}
		\label{lem:flat-torus-degeneration}
		Set $W:=T_0H$.  The symmetric bilinear form
		\[
		g_\omega(u,v):=\omega(u,Jv),
		\qquad u,v\in V,
		\]
		is positive semidefinite with kernel $K\subset W$.  Let
		$\overline g_\omega$ denote the translation-invariant flat metric on $A/H$
		defined by the quotient norm
		\[
		|v+W|_{\overline g_\omega}
		:=
		\inf_{w\in W}|v+w|_{g_\omega},
		\qquad v\in V.
		\]
		Let $\beta_i$ be any sequence of translation-invariant K\"ahler forms on
		$A$ such that $[\beta_i]\to\alpha$.  Then
		\[
		(A,\beta_i)
		\xrightarrow[i\to\infty]{\mathrm{GH}}
		(A/H,\overline g_\omega),
		\qquad
		\dim_\R(A/H)=2n-\dim_\R H,
		\]
		and
		\[
		\diam(A,\beta_i)
		\longrightarrow
		\diam(A/H,\overline g_\omega).
		\]
		In particular, $\diam(A,\beta_i)\to0$ if and only if $H=A$.
	\end{lemma}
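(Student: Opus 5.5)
Everything here is linear algebra on the universal cover $V$ together with an elementary Gromov--Hausdorff estimate for flat tori. First, since $\omega$ is a semipositive translation-invariant real $(1,1)$-form, $g_\omega(u,v)=\omega(u,Jv)$ is symmetric, $J$-invariant and positive semidefinite. Its kernel is $K=\ker\omega$: indeed $g_\omega(u,u)=0$ forces $u$ into the radical of $g_\omega$, and $\omega(u,v)=g_\omega(u,-Jv)$ shows this radical equals $\ker\omega$. The image of $K$ lies in $H$, so $K\subset W$.

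Second, since the $\beta_i$ are translation invariant, they are identified with constant forms on $V$. Their classes determine them uniquely, because $H^{1,1}(A,\R)$ is the space of constant real $(1,1)$-forms. Hence $\beta_i\to\omega$ as constant forms, and the metrics $g_i:=g_{\beta_i}$ converge to $g_\omega$ as quadratic forms on $V$. The distance in $(A,g_i)$ between $[x]$ and $[y]$ is $\min_{\lambda\in\Lambda}|x-y+\lambda|_{g_i}$.

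The key step is to compare this with the quotient distance
\[
\bar d([x],[y])=\inf_{\lambda\in\Lambda,\,w\in W}|x-y+\lambda+w|_{g_\omega},
\]
which is the distance on $A/H=V/(W+\Lambda)$. Here $W+\Lambda$ is closed, since $H$ is a closed subtorus, so $A/H$ is a flat torus of real dimension $2n-\dim_\R H$. I will show that the projection $\pi:(A,g_i)\to(A/H,\bar d)$ is an $\epsilon_i$-GH approximation with $\epsilon_i\to0$. Surjectivity is clear.

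\emph{Upper bound on the distortion.} Since $g_i\to g_\omega$ uniformly on compact sets of $V$, for $x-y+\lambda$ in a fundamental domain we get $d_i\ge \bar d-o(1)$: restrict to a fixed compact set of $v$'s and note $|v|_{g_\omega}\ge \bar d$.

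\emph{Lower bound on the distortion; the main obstacle.} We need $d_i([x],[y])\le\bar d([x],[y])+o(1)$ uniformly. Because $\overline{(K+\Lambda)/\Lambda}=H$, the set $K+\Lambda$ is dense in $W+\Lambda$. So for any $\epsilon>0$ there is a finite set $F\subset K+\Lambda$ with the following property: every $w$ in a fixed compact piece $B$ of $W$ lies within Euclidean distance $\epsilon$ of $F+\Lambda$. Only a bounded piece of $W$ matters, since $W/(W\cap\Lambda)$ is compact.

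Take a vector $v+w$ realizing $\bar d$ up to $\epsilon$, with $w\in B$. Write $w=k+\lambda'+e$ with $k\in K$ ranging over a bounded set, $\lambda'\in\Lambda$ and $|e|<\epsilon$. Then
\[
d_i\le |v+k+e|_{g_i}\le |v+k|_{g_\omega}+o(1)+C\epsilon = |v|_{g_\omega}+o(1)+C\epsilon,
\]
using $g_\omega(\cdot,k)=0$ and uniform convergence on bounded sets.

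Care is needed for one point: the vector $v+w$ realizing the infimum need not have $|v|_{g_\omega}$ computed with $w\in W$ absorbed. This is handled by noting $|v+w|_{g_\omega}$ depends on $w$ only modulo $K$. So we may pick a compact set of representatives $v$ and compact $w$, and the inequality holds uniformly. This gives $\epsilon_i\to0$, hence the GH convergence.

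The diameter convergence follows since diameters are continuous under GH convergence. The dimension formula is immediate. Finally, $\diam(A/H)=0$ iff $A/H$ is a point iff $H=A$; this uses $\bar g_\omega$ being a genuine metric on $V/W$, which holds since $\ker g_\omega=K\subset W$.
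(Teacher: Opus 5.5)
Your route is the paper's route: translation-invariant forms are determined by their classes, so $\beta_i\to\omega$; then the flat lattice distance formula; then continuity of diameter. But the step you dispose of in one line, $d_i\ge\bar d-o(1)$, is where the argument breaks. In fact the lemma is false for an arbitrary sequence $\beta_i$.

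The inequality does not follow from uniform convergence $g_{\beta_i}\to g_\omega$ on compact sets. The distance $d_i([x],[y])=\min_{\lambda\in\Lambda}|x-y+\lambda|_{\beta_i}$ is a minimum over all lattice translates. Because the $g_{\beta_i}$ degenerate, the minimizing translate need not stay in any fixed compact set. Placing $x-y+\lambda$ in a fundamental domain only bounds $d_i$ from above. Near-null directions of $\beta_i$ that tilt slightly away from $K$ can wind around $A$ at negligible cost.

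\emph{Counterexample.} Take $A=\C^2/\Z[i]^2$ and $\omega=\frac{i}{2}\,dz_1\wedge d\bar z_1$. Then $K=W=\{0\}\times\C$, $H\ne A$, and $(A/H,\bar g_\omega)$ is the square torus $\C/\Z[i]$, of diameter $\frac{\sqrt2}{2}$. Let $\beta_m$ be the translation-invariant K\"ahler form with $|v|^2_{\beta_m}=|v_1+m^{-1}v_2|^2+m^{-3}|v_2|^2$; then $\beta_m\to\omega$.
\begin{itemize}
\item For even $m$, the translate $v=(\tfrac12,-\tfrac{m}{2})$ of $(\tfrac12,0)$ has $|v|^2_{\beta_m}=\frac{1}{4m}$. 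So the point $[(\tfrac12,0)]$, whose image in $A/H$ lies at distance $\tfrac12$ from $0$, is at $\beta_m$-distance at most $\frac{1}{2\sqrt m}$ from $0$. The optimal translate has Euclidean length about $\tfrac{m}{2}$.
\item Globally, in the coordinates $w_1=z_1+z_2/m$, $w_2=m^{-3/2}z_2$, the metric is Euclidean and $\Z[i]^2$ becomes $\Z[i](\tfrac1m,m^{-3/2})\oplus\Z[i](0,m^{-1/2})$. Hence $\diam(A,\beta_m)\le m^{-1/2}\to0$, even though $H\ne A$.
\end{itemize}
This is the flat analogue of Remark~\ref{rem:tangential}. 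The paper's one-line appeal to the lattice distance formula passes over the same point.

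\emph{What survives.} For every sequence, $d_i\le\bar d+o(1)$ holds uniformly, and your density argument for $K+\Lambda$ in $W+\Lambda$ proves it once the bookkeeping is fixed. From $w=k+\lambda'+e$ one gets $v+\lambda'=(v+w)-k-e$, hence
\[
d_i\le|v+w|_{\beta_i}+|k|_{\beta_i}+|e|_{\beta_i},
\]
with $v+w$ in a fixed compact set and $k$ in a finite subset of $K$. This gives $\limsup_i\diam(A,\beta_i)\le\diam(A/H,\bar g_\omega)$, which is all that (i)$\Rightarrow$(ii) of Proposition~\ref{prop:torus} needs.

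The lower bound, and with it the lemma, needs an extra hypothesis such as $\beta_i\ge(1-\delta_i)\omega$ with $\delta_i\to0$. An example is $\beta_i=\omega+\kappa_i$ with $\kappa_i$ K\"ahler, as in Theorem~\ref{thm:point-collapse-general-CY}. Under that hypothesis, for every $\lambda\in\Lambda$,
\[
|u+\lambda|_{\beta_i}\ge(1-\delta_i)^{1/2}|u+\lambda|_{g_\omega}\ge(1-\delta_i)^{1/2}\,\bar d(0,[u]),
\]
so $d_i\ge\bar d-o(1)$ with no compactness argument at all. Without such a restriction, the example above also defeats (iii)$\Rightarrow$(i) of Proposition~\ref{prop:torus}.
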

	
	\begin{proof}
		The natural map from the space of translation-invariant real $(1,1)$-forms
		on $A$ to $H^{1,1}(A,\R)$ is an isomorphism.  Hence
		$[\beta_i]\to[\omega]$ implies that $\beta_i\to\omega$ as forms.  The flat
		lattice distance formula gives the stated Gromov--Hausdorff limit, and
		continuity of diameter under Gromov--Hausdorff convergence gives the diameter
		limit.  Finally, the quotient metric has zero diameter exactly when $H=A$.
	\end{proof}
	
	\begin{proposition}
		\label{prop:torus}
		The following conditions are equivalent:
		\begin{enumerate}[label=\textup{(\roman*)}]
			\item $\alpha$ is rigid,
			\item for every sequence of translation-invariant K\"ahler forms
			$\beta_i$ on $A$ such that $[\beta_i]\to\alpha$,
			\[
			\diam(A,\beta_i)\longrightarrow0,
			\]
			\item for some sequence of translation-invariant K\"ahler forms
			$\beta_i$ on $A$ such that $[\beta_i]\to\alpha$,
			\[
			\diam(A,\beta_i)\longrightarrow0.
			\]
		\end{enumerate}
	\end{proposition}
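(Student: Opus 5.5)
By Lemma~\ref{lem:flat-torus-degeneration}, the diameter of $(A,\beta_i)$ converges to $\diam(A/H,\overline g_\omega)$ for \emph{every} sequence of translation-invariant K\"ahler forms with $[\beta_i]\to\alpha$. This limit is independent of the sequence and vanishes exactly when $H=A$. Consequently (ii) and (iii) are each equivalent to the condition $H=A$. Such sequences exist, for instance $\beta_i=\omega+i^{-1}\omega_0$ with $\omega_0$ a flat K\"ahler form. The proposition therefore reduces to showing that $\alpha$ is rigid if and only if the closure $H$ of the image of $K=\ker\omega$ is all of $A$.

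\textbf{Step 1: every positive current in $\alpha$ is invariant along $H$.} Let $T=\omega+\ddc\varphi\ge0$. I would first average $T$ over translations by $A$. Since $T$ is cohomologous to the invariant form $\omega$, the average is $\omega$ itself. Next, take $v\in K$ and consider the trace measure $T\wedge \omega_0^{n-1}$ tested against directions in $K$. Positivity gives $T(v,Jv)\ge0$ pointwise as a measure. On the other hand,
\[
\int_A T(v,Jv)\,dV=\omega(v,Jv)\,\Vol(A)=0,
\]
because integrating a $\ddc$-exact term against a constant coefficient form gives zero. Hence the measure $T(v,Jv)$ vanishes. By Cauchy--Schwarz for positive $(1,1)$-currents, this forces $T(v,\cdot)=0$ for all $v\in K$ and $v\in JK$; note that $K$ is $J$-invariant, since $\omega$ is $(1,1)$. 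Writing $\varphi$ as a quasi-psh function, this means $\partial_v\bar\partial\varphi=0$ in the $K$-directions in the distributional sense. Then $\varphi$ restricted to each leaf $x+K$ is pluriharmonic and bounded above, hence, as I expect to argue via Fourier expansion on $A$, constant along $K$. Concretely, expand $\varphi=\sum_\xi \hat\varphi(\xi)e^{2\pi i\langle\xi,\cdot\rangle}$ over the dual lattice. The equation $\ddc\varphi(v,Jv)=0$ for $v\in K$ gives $|\xi(v)|^2+|\xi(Jv)|^2=0$ on the support of $\hat\varphi$, up to positive constants, so $\xi$ annihilates $K$. A character vanishing on $K$ vanishes on its closure $H$. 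Hence $\varphi$ is $H$-invariant.

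\textbf{Step 2: conclusion of the two directions.} If $H=A$, Step 1 forces $\varphi$ to be constant, so $T=\omega$ and $\alpha$ is rigid. Conversely, if $H\ne A$, choose a nonzero lattice character $\xi$ vanishing on $W=T_0H$ (possible since $A/H$ is a nontrivial torus). Put $\psi=\epsilon\cos(2\pi\langle\xi,x\rangle)$. Then $\ddc\psi$ is a smooth form whose coefficients lie in directions transverse to $K$; more precisely, $\ddc\psi(v,\cdot)=0$ for $v\in K$, since $\xi|_K=0$ and $\xi|_{JK}=0$. Because $\omega$ is positive definite on $V/K$, the sum $\omega+\ddc\psi$ is semipositive for small $\epsilon$, and it differs from $\omega$. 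Thus $\alpha$ is nonrigid.

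\textbf{Main obstacle.} The delicate point is Step 1: passing from positivity plus the cohomological vanishing to the statement that the distributional Fourier coefficients of $\varphi$ are supported on the annihilator of $K$. The quasi-psh $\varphi$ is only $L^1$, so I would carry out the computation by pairing $T$ with smooth constant-coefficient test forms. I would also check that the Fourier coefficients of the measure $T(v,Jv)$ equal $c|\xi_{\C}(v)|^2\hat\varphi(\xi)$, which is valid for $\xi\ne0$, and that the vanishing of a positive measure then kills every $\hat\varphi(\xi)$ with $\xi(K)\ne0$.
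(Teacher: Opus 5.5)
Your proof is correct. The reduction of (ii) and (iii) to the condition $H=A$ via Lemma~\ref{lem:flat-torus-degeneration}, including the remark that approximating sequences exist, matches the paper. So does the nonrigidity construction with $\cos(2\pi\langle\xi,x\rangle)$.

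The genuine difference is in the direction $H=A\Rightarrow$ rigid. The paper does the following:
\begin{itemize}
\item it cites Sibony--Soldatenkov--Verbitsky to get that $T$ vanishes along the kernel foliation;
\item it asserts that the potential $u$ is pluriharmonic on each leaf;
\item it takes a maximum point of the upper semicontinuous $u$ and applies the maximum principle on the dense leaf through it;
\item it concludes by upper semicontinuity.
\end{itemize}
You proceed differently. You prove the vanishing directly: for $v\in K$ the positive measure $T(v,Jv)$ has total mass $\omega(v,Jv)\Vol(A)=0$. You then read off from the Fourier expansion that $\hat\varphi(\xi)=0$ unless $\xi|_K=0$, so $\varphi$ is $H$-invariant.

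Your route is self-contained and treats $\varphi$ purely as a distribution. It never has to restrict an $L^1$ potential to a measure-zero leaf, or evaluate it pointwise at a maximum; the paper's leafwise argument needs both without spelling them out. It also gives more: every current in $\mathcal P(\alpha)$ has an $H$-invariant potential, even when $H\ne A$. The paper's argument is shorter and geometric, and it is the mechanism that extends beyond flat tori (as in Sibony--Soldatenkov--Verbitsky), where Fourier analysis is unavailable.

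Two small points. Your Fourier step uses only the vanishing of the measures $T(v,Jv)$ for $v\in K$, so the averaging remark and the Cauchy--Schwarz step are superfluous. In the converse direction you should record why $\ddc\psi\not\equiv0$. For instance, $\ddc\psi(w,Jw)$ is a nonzero constant multiple of $(\xi(w)^2+\xi(Jw)^2)\psi$, which does not vanish identically when $\xi\ne0$.
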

	
	\begin{proof}
		We first show that $\alpha$ is rigid if and only if $H=A$.  Suppose that
		$H=A$.  Then every leaf of the kernel foliation is dense.  Let $T$ be a
		positive current cohomologous to $\omega$, and write
		$T=\omega+\ddc u$ with $u$ upper semicontinuous.  The argument of
		Sibony--Soldatenkov--Verbitsky \cite[Proposition~1.2]{SSV} shows that $T$
		vanishes in the directions tangent to the kernel foliation, so $u$ is
		pluriharmonic on each leaf.  If $p$ is a maximum point of $u$, the maximum
		principle implies that $u$ is constant on the leaf through $p$.  Since this
		leaf is dense, upper semicontinuity implies that $u$ is constant on $A$.
		Hence $T=\omega$, and $\alpha$ is rigid.
		
		Conversely, if $H\neq A$, choose a nonzero character
		$\ell\colon A\to\R/\Z$ vanishing on $H$, and let
		$\varphi=\cos(2\pi\ell)$.  Since $K$ is $J$-invariant,
		$\iota_v\ddc\varphi=0$ for $v\in K$.  Moreover,
		$\ddc\varphi\not\equiv0$, since otherwise $\varphi$ would be
		pluriharmonic and hence constant on $A$.  Thus, for sufficiently small
		$\varepsilon>0$, the forms
		\[
		\omega\pm\varepsilon\ddc\varphi
		\]
		are distinct smooth semipositive representatives of $\alpha$.  Hence
		$\alpha$ is nonrigid.
		
		Lemma~\ref{lem:flat-torus-degeneration} gives
		\textup{(i)}$\Rightarrow$\textup{(ii)}.  Since $\omega$ can be approximated
		by translation-invariant K\"ahler forms,
		\textup{(ii)}$\Rightarrow$\textup{(iii)} is immediate.  Finally,
		\textup{(iii)} and the same lemma give
		$\diam(A/H,\overline g_\omega)=0$.  Thus $H=A$, and $\alpha$ is rigid.
	\end{proof}
	
	When $A/H$ has positive dimension, pullbacks of suitable functions on the
	quotient yield transverse perturbations of $\omega$ within the class $\alpha$.
	The Kummer construction in Section~\ref{sec:kummer} realizes the same mechanism
	on a K3 surface.

	\section{Point collapse implies rigidity}
	\label{sec:point-collapse-rigidity}
	
	In this section, we prove that for any compact
	Calabi--Yau manifold, a limiting nef class is rigid whenever the Ricci-flat
	metrics associated with a single sequence of its K\"ahler perturbations
	collapse to a point.
	
	Throughout this section, let $X$ be a compact Calabi--Yau manifold of
	complex dimension $n$, with a fixed complex structure.  Let
	\[
	0\neq \alpha\in \overline{\mathcal K}_X
	\]
	be a nef class satisfying
	\[
	\alpha^n=0.
	\]
	
	We denote by
	\[
	\mathcal P(\alpha)
	:=
	\{T\geq 0:\ T \text{ is a closed real $(1,1)$-current and }
	[T]=\alpha\}
	\]
	the set of positive currents in the class $\alpha$.
	
	\subsection{A fixed normalized volume measure}
	\label{subsec:fixed-measure}
	
	Fix a nowhere vanishing holomorphic volume form $\Omega$ on $X$.  If
	$\omega$ is any Ricci-flat Kähler form on the fixed complex manifold $X$,
	then
	\[
	\operatorname{Ric}(\omega)
	=
	-dd^c\log
	\frac{\omega^n}{i^{n^2}\Omega\wedge\overline{\Omega}}
	=0.
	\]
	Hence
	\[
	\omega^n=c_\omega\,i^{n^2}\Omega\wedge\overline{\Omega}
	\]
	for some constant $c_\omega>0$.  It follows that the normalized volume
	measure
	\[
	d\mu
	:=
	\frac{\omega^n}{\int_X\omega^n}
	=
	\frac{i^{n^2}\Omega\wedge\overline{\Omega}}
	{\int_X i^{n^2}\Omega\wedge\overline{\Omega}}
	\]
	is independent of the Ricci-flat Kähler class.
	
	If $T_0,T_1\in\mathcal P(\alpha)$, the
	$\partial\bar\partial$-lemma for currents gives
	\[
	T_1-T_0=dd^c u
	\]
	for some real $u\in L^1(X)$, unique up to an additive constant.
	We normalize $u$ by
	\[
	\int_Xu\,d\mu=0
	\]
	and define
	\[
	d_\mu(T_0,T_1)
	:=
	\int_X|u|\,d\mu.
	\]
	In particular,
	\[
	d_\mu(T_0,T_1)=0
	\quad\Longleftrightarrow\quad
	T_0=T_1.
	\]
	
	\subsection{A Green function estimate}
	
	We first establish a quantitative estimate for an arbitrary K\"ahler class.
	
	\begin{proposition}
		\label{prop:general-green-rigidity}
		Let $\beta\in\Kah_X$.  Write $\omega=\CY(\beta)$ for the unique
		Ricci-flat K\"ahler form in the class $\beta$, and set
		$D_\beta:=\operatorname{diam}(X,\omega)$.  Then
		\[
		\operatorname{diam}_{d_\mu}\mathcal P(\alpha)
		\leq
		C_n D_\beta^2
		\frac{
			\int_X\alpha\wedge\beta^{n-1}
		}{
			\int_X\beta^n
		}.
		\]
		Here $C_n$ depends only on the dimension and on the normalization
		conventions for $d^c$ and the Laplacian.
	\end{proposition}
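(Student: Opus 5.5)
Fix $T_0,T_1\in\mathcal P(\alpha)$ and write $T_1-T_0=\ddc u$ with $\int_X u\,d\mu=0$. The plan is to represent $u$ through the Green function of the Ricci-flat metric $\omega=\CY(\beta)$ and to control that Green function by the diameter. Since $\omega$ is Ricci-flat, the Laplacian satisfies $\Delta_\omega u\,\omega^n = n\,\ddc u\wedge\omega^{n-1}$, up to the normalization constant fixed by the conventions. We also know $d\mu=\omega^n/\int_X\omega^n$. With $G(x,y)$ the Green function of $\Delta_\omega$ normalized so that $\int_X G(x,\cdot)\,\omega^n=0$, the representation formula gives
\[
u(x)=-\int_X G(x,y)\,\Delta_\omega u(y)\,\omega^n(y)
=-n\int_X G(x,y)\,(T_1-T_0)\wedge\omega^{n-1}(y).
\]
Here we use $\int u\,d\mu=0$. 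For currents this is first justified by regularizing $T_j$, for example with Demailly's approximation, or by noting that $T_j\wedge\omega^{n-1}$ is a positive measure and using $G\in L^1$.

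The key input is a uniform lower bound for the Green function on manifolds with $\Ric\ge0$. By Bakry--Qian or Li--Yau/Cheng type heat kernel estimates, nonnegative Ricci curvature gives
\[
G(x,y)\ge -C_n\,\frac{D_\beta^2}{\Vol(X,\omega)},
\]
with the same normalization of $G$. I would try to quote this as a standard estimate, for example the one used by Tosatti et al. for diameter bounds. Set $A:=C_nD_\beta^2/\Vol$, so that $\tilde G:=G+A\ge0$. Because $\int_X (T_1-T_0)\wedge\omega^{n-1}=0$ by cohomology, adding the constant $A$ does not change the formula for $u$. This yields
\[
|u(x)|\le n\int_X \tilde G(x,y)\,\bigl(T_0+T_1\bigr)\wedge\omega^{n-1}(y),
\]
since both $T_j\wedge\omega^{n-1}$ are positive measures and $\tilde G\ge0$.

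Next integrate in $x$ against $d\mu$ and apply Fubini. Since $\int_X G(x,y)\,\omega^n(x)=0$, we get $\int_X\tilde G(x,y)\,d\mu(x)=A$. Therefore
\[
\int_X|u|\,d\mu\le nA\int_X(T_0+T_1)\wedge\omega^{n-1}
=2nC_nD_\beta^2\,\frac{\int_X\alpha\wedge\beta^{n-1}}{\int_X\beta^n},
\]
where the volume $\int_X\beta^n$ absorbs the constant $n!$ according to the volume convention. Since $T_0,T_1\in\mathcal P(\alpha)$ were arbitrary, this bounds $\operatorname{diam}_{d_\mu}\mathcal P(\alpha)$ and proves the proposition.

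The main obstacle is the Green function lower bound with the sharp scaling $D^2/\Vol$ depending only on $n$. It needs care to be dimension-only, meaning no non-collapsing assumption. I would derive it from the Li--Yau heat kernel bound: write $G=\int_0^\infty(H_t-1/V)\,dt$. For $t\le D^2$, use $H_t\ge0$. For $t\ge D^2$, the Poincaré inequality with $\lambda_1\ge c_n/D^2$ (Li--Yau/Zhong--Yang) gives exponential decay, with the sup bound $H_t\le C/V$ at $t\sim D^2$ coming from volume doubling. A secondary technical point is justifying the representation formula for singular $u$, which I would handle by smoothing $T_j$ within the class plus small multiples of $\omega$ and passing to the limit in $L^1$.
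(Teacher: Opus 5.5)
Your proposal is correct and follows essentially the same route as the paper: represent $u$ as the Green potential of the zero-mass signed measure $(T_1-T_0)\wedge\omega^{n-1}$, use the Ricci-flat lower bound $G\ge -C_nD_\beta^2/V_\omega$, and apply Fubini. The paper cites this lower bound from Bando--Mabuchi rather than deriving it from heat kernel estimates. Your shift $\tilde G=G+A\ge0$, which uses the zero total mass, is a repackaging of the paper's estimate $\sup_y\int_X|G(x,y)|\,dV_\omega(x)\le C_nD_\beta^2$. For the singular $u$, the paper avoids regularization: it shows that $u$ minus the Green potential is a harmonic distribution, hence constant, and then zero by the normalization.
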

	
	\begin{proof}
		Fix $T_0,T_1\in\mathcal P(\alpha)$, and choose a real $u\in L^1(X)$
		such that
		\[
		T_1-T_0=dd^c u,
		\qquad
		\int_Xu\,d\mu=0.
		\]
		Let
		\[
		dV_\omega:=\frac{\omega^n}{n!},
		\qquad
		V_\omega:=\operatorname{Vol}(X,\omega).
		\]
		Since the normalized volume measure is independent of $\beta$,
		\[
		d\mu=\frac{dV_\omega}{V_\omega},
		\]
		and therefore
		\[
		\int_Xu\,dV_\omega=0.
		\]
		
		There is a constant $c_n>0$, depending only on $n$, such that, in the
		sense of distributions,
		\[
		(\Delta_\omega u)\,dV_\omega
		=
		c_n\,dd^cu\wedge\omega^{n-1}.
		\]
		Thus
		\[
		\nu
		:=
		(\Delta_\omega u)\,dV_\omega
		=
		c_n(T_1-T_0)\wedge\omega^{n-1}
		\]
		is a finite signed measure of total mass zero.  Since
		$T_j\wedge\omega^{n-1}$ are positive measures, the total variation of
		$\nu$ is at most
		\[
		c_n\int_X(T_0+T_1)\wedge\omega^{n-1}
		=
		2c_n\int_X\alpha\wedge\beta^{n-1}.
		\]
		
		Let $G_\omega(x,y)$ be the Green kernel of $\Delta_\omega$ with zero
		mean.  Since $\operatorname{Ric}(\omega)=0$, the standard Green function
		estimate \cite[Theorem 3.2]{BandoMabuchi} gives
		\[
		G_\omega(x,y)
		\geq
		-C_n\frac{D_\beta^2}{V_\omega}.
		\]
		Because
		\[
		\int_XG_\omega(x,y)\,dV_\omega(x)=0,
		\]
		the positive and negative parts of $G_\omega(\cdot,y)$ have equal
		integral.  Hence
		\[
		\sup_{y\in X}
		\int_X|G_\omega(x,y)|\,dV_\omega(x)
		\leq
		C_nD_\beta^2.
		\]
		
		Define the Green potential
		\[
		v(x)
		:=
		\int_XG_\omega(x,y)\,d\nu(y),
		\]
		with the sign chosen according to the convention for $\Delta_\omega$.
		By the preceding estimate and Fubini's theorem,
		\[
		\|v\|_{L^1(dV_\omega)}
		\leq
		C_nD_\beta^2
		\int_X\alpha\wedge\beta^{n-1}.
		\]
		Moreover,
		\[
		\Delta_\omega v=\Delta_\omega u
		\]
		in the sense of distributions.  Thus $u-v$ is distributionally
		harmonic.  Elliptic regularity implies that it is smooth and harmonic,
		hence constant.  Both $u$ and $v$ have zero $dV_\omega$-average, so
		$u=v$ almost everywhere.  Therefore,
		\[
		\int_X|u|\,dV_\omega
		\leq
		C_nD_\beta^2
		\int_X\alpha\wedge\beta^{n-1}.
		\]
		Dividing by
		\[
		V_\omega
		=
		\frac{1}{n!}\int_X\beta^n
		\]
		and absorbing the factorial into $C_n$ gives
		\[
		d_\mu(T_0,T_1)
		\leq
		C_nD_\beta^2
		\frac{
			\int_X\alpha\wedge\beta^{n-1}
		}{
			\int_X\beta^n
		}.
		\]
		Taking the supremum over $T_0,T_1\in\mathcal P(\alpha)$ proves the
		proposition.
	\end{proof}
	
	\begin{corollary}
		\label{cor:general-quantitative-rigidity}
		Let $\beta_i\to\alpha$ be K\"ahler classes and put
		\[
		\omega_i=\CY(\beta_i),
		\qquad
		D_i=\operatorname{diam}(X,\omega_i).
		\]
		If
		\[
		D_i^2
		\frac{
			\int_X\alpha\wedge\beta_i^{n-1}
		}{
			\int_X\beta_i^n
		}
		\longrightarrow0,
		\]
		then $\alpha$ is rigid.
	\end{corollary}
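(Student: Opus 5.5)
The corollary follows directly from Proposition~\ref{prop:general-green-rigidity}; the plan is simply to apply it along the sequence and pass to the limit. Since the quantity $\operatorname{diam}_{d_\mu}\mathcal P(\alpha)$ is defined independently of $\beta$, I would apply the proposition with $\beta=\beta_i$ for each $i$. The key point is that the normalized measure $\mu$ from Subsection~\ref{subsec:fixed-measure} does not depend on the Ricci-flat class. Consequently, the left-hand side of the estimate is the same number for every $i$, and we obtain
\[
\operatorname{diam}_{d_\mu}\mathcal P(\alpha)
\le C_n D_i^2\,\frac{\int_X\alpha\wedge\beta_i^{n-1}}{\int_X\beta_i^n}
\qquad\text{for all } i.
\]

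Letting $i\to\infty$ and using the hypothesis, the right-hand side tends to zero, so $\operatorname{diam}_{d_\mu}\mathcal P(\alpha)=0$. Now take any $T_0,T_1\in\mathcal P(\alpha)$. Then $d_\mu(T_0,T_1)=0$, which forces the normalized potential $u$ with $T_1-T_0=dd^cu$ to vanish $\mu$-almost everywhere. Since $\mu$ is a smooth positive volume form, $u=0$ as an $L^1$ function, and therefore $T_1=T_0$.

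It remains to note that $\mathcal P(\alpha)$ is nonempty, so that rigidity is meaningful. This holds because $\alpha$ is nef: for instance, weak limits of $\omega_i=\CY(\beta_i)$ have bounded mass and converge, after passing to a subsequence, to a closed positive current in $\alpha$. Hence $\mathcal P(\alpha)$ consists of exactly one current, and $\alpha$ is rigid.

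There is no genuine obstacle here. The only point requiring care is the class-independence of $d_\mu$, which is what allows the bounds for different $i$ to be compared, and this was already established earlier in the section.
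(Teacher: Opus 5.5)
Your proof is correct and is essentially the paper's argument: apply Proposition~\ref{prop:general-green-rigidity} with $\beta=\beta_i$, observe that $d_\mu(T_0,T_1)$ does not depend on $i$ while the right-hand side tends to zero, and conclude $T_0=T_1$. Your extra remark that $\mathcal P(\alpha)\neq\emptyset$, obtained from weak limits of the $\omega_i$, is a harmless completion of a point the paper leaves implicit.
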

	
	\begin{proof}
		For any $T_0,T_1\in\mathcal P(\alpha)$,
		Proposition~\ref{prop:general-green-rigidity} gives
		\[
		d_\mu(T_0,T_1)
		\leq
		C_nD_i^2
		\frac{
			\int_X\alpha\wedge\beta_i^{n-1}
		}{
			\int_X\beta_i^n
		}.
		\]
		The left side is independent of $i$, while the right side
		tends to zero.  Hence $T_0=T_1$.
	\end{proof}
	
	\subsection{K\"ahler perturbations of the limiting class}
	
	For perturbations in a K\"ahler direction, the cohomological factor in
	Corollary~\ref{cor:general-quantitative-rigidity} has an elementary uniform
	bound.
	
	\begin{lemma}
		\label{lem:positive-approach-ratio}
		Let $\kappa\in\Kah_X$ and put $\beta=\alpha+\kappa$.  Then
		$\beta\in\Kah_X$ and
		\[
		0\leq
		\frac{
			\int_X\alpha\wedge\beta^{n-1}
		}{
			\int_X\beta^n
		}
		=
		1-
		\frac{
			\int_X\kappa\wedge\beta^{n-1}
		}{
			\int_X\beta^n
		}
		<1.
		\]
	\end{lemma}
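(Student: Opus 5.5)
The plan is to prove the three assertions in order: $\beta$ is K\"ahler, the identity holds, and the ratio lies in $[0,1)$.

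First, $\beta=\alpha+\kappa$ is K\"ahler because $\alpha$ is nef. Indeed, $\alpha\in\overline{\Kah_X}$ and $\Kah_X$ is an open convex cone. Hence $\overline{\Kah_X}+\Kah_X\subset\Kah_X$: if $\alpha=\lim\alpha_j$ with $\alpha_j\in\Kah_X$, then $\alpha+\kappa=\alpha_j+\bigl(\kappa+(\alpha-\alpha_j)\bigr)$, and the bracket is K\"ahler for $j$ large by openness. So $\int_X\beta^n>0$.

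Second, the identity is pure linearity. Since $\alpha=\beta-\kappa$,
\[
\int_X\alpha\wedge\beta^{n-1}=\int_X\beta^n-\int_X\kappa\wedge\beta^{n-1}.
\]
Dividing by $\int_X\beta^n$ gives the middle expression.

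Third, the two inequalities come from positivity of mixed intersection numbers.
\begin{itemize}
\item For the upper bound: $\kappa$ and $\beta$ are both K\"ahler, so they are represented by K\"ahler forms $\omega_\kappa,\omega_\beta$. The form $\omega_\kappa\wedge\omega_\beta^{n-1}$ is a strictly positive volume form, so $\int_X\kappa\wedge\beta^{n-1}>0$. This gives the strict inequality $<1$.
\item For the lower bound: $\alpha$ is a limit of K\"ahler classes $\alpha_j$, and $\int_X\alpha_j\wedge\beta^{n-1}>0$ for each $j$ by the same argument. Passing to the limit by continuity of the intersection pairing on cohomology gives $\int_X\alpha\wedge\beta^{n-1}\ge0$. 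Equivalently, one can pair any positive current $T\in\mathcal P(\alpha)$ with the smooth positive form $\omega_\beta^{n-1}$.
\end{itemize}

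There is no real obstacle here. The only point needing care is that $\alpha$ is merely nef, not K\"ahler. This is handled by the approximation $\alpha_j\to\alpha$ in both the openness argument of the first step and the nonnegativity argument of the third step.
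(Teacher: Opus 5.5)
Your proposal is correct and follows the same route as the paper: $\beta$ is K\"ahler because nef plus K\"ahler is K\"ahler, the identity is the linear splitting $\int_X\beta^n=\int_X\alpha\wedge\beta^{n-1}+\int_X\kappa\wedge\beta^{n-1}$ with strictly positive second term, and nonnegativity comes from pairing a nef class with a K\"ahler class. You simply write out the approximation details that the paper leaves implicit.
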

	
	\begin{proof}
		The class $\beta$ is K\"ahler because $\alpha$ is nef and $\kappa$ is
		K\"ahler.  Positivity of the intersection of a nef class with a K\"ahler
		class gives the first inequality,
		while
		\[
		\int_X\beta^n
		=
		\int_X\alpha\wedge\beta^{n-1}
		+
		\int_X\kappa\wedge\beta^{n-1}
		\]
		and the second term is strictly positive.  This proves the identity and the
		strict upper bound.
	\end{proof}
	
	\begin{theorem}
		\label{thm:point-collapse-general-CY}
		Let $X$ be a compact Calabi--Yau manifold of complex dimension $n$, and let
		\[
		0\neq\alpha\in\overline{\mathcal K}_X,
		\qquad
		\alpha^n=0.
		\]
		Let $\kappa_i\to0$ be K\"ahler classes.  Put
		\[
		\beta_i=\alpha+\kappa_i,
		\qquad
		\omega_i=\CY(\beta_i),
		\qquad
		D_i=\operatorname{diam}(X,\omega_i).
		\]
		If $D_i\to0$, then $\alpha$ is rigid.
	\end{theorem}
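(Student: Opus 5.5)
The plan is to combine Corollary~\ref{cor:general-quantitative-rigidity} with the uniform cohomological bound of Lemma~\ref{lem:positive-approach-ratio}. No further geometric input should be needed: the Green function estimate already turns diameter control into control of the $d_\mu$-diameter of $\mathcal P(\alpha)$, and the only remaining task is to check that the cohomological factor stays bounded along the given sequence.

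First, apply Lemma~\ref{lem:positive-approach-ratio} with $\kappa=\kappa_i$. Since $\alpha$ is nef and each $\kappa_i$ is K\"ahler, every $\beta_i=\alpha+\kappa_i$ lies in $\Kah_X$, so $\omega_i=\CY(\beta_i)$ is well defined. The lemma also gives
\[
0\le \frac{\int_X\alpha\wedge\beta_i^{n-1}}{\int_X\beta_i^n}<1
\]
for every $i$, with a bound that does not depend on $i$. This uniformity is the point: we do not need to know how the ratio behaves as $\kappa_i\to0$. In fact, since $\alpha^n=0$, both numerator and denominator tend to zero and the ratio can approach a nonzero limit, but the lemma makes that irrelevant.

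Next, multiply by $D_i^2$. Since $D_i\to0$ by hypothesis,
\[
D_i^2\,\frac{\int_X\alpha\wedge\beta_i^{n-1}}{\int_X\beta_i^n}\le D_i^2\longrightarrow0 .
\]
Because $\kappa_i\to0$, we have $\beta_i\to\alpha$, so the hypotheses of Corollary~\ref{cor:general-quantitative-rigidity} hold. That corollary yields $d_\mu(T_0,T_1)=0$ for all $T_0,T_1\in\mathcal P(\alpha)$. Hence $T_0=T_1$, since $d_\mu$ separates points as noted in Section~\ref{subsec:fixed-measure}.

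It remains to check that $\mathcal P(\alpha)$ is nonempty, so that ``rigid'' means exactly one current rather than none. This holds because a nef class on a compact K\"ahler manifold contains a closed positive current: take a weak limit of smooth representatives of $\alpha+\varepsilon\kappa$ that are at least $-\varepsilon\kappa$, or simply take a weak limit of the forms $\omega_i$ themselves, whose masses are bounded.

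There is no serious obstacle here; the analytic content sits in Proposition~\ref{prop:general-green-rigidity}. The only step needing care is that the volume-ratio bound is uniform in $i$ even though $\int_X\beta_i^n\to0$, and Lemma~\ref{lem:positive-approach-ratio} provides exactly that.
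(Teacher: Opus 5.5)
Your proof is correct and matches the paper's: the paper also combines Proposition~\ref{prop:general-green-rigidity} with Lemma~\ref{lem:positive-approach-ratio} to get $\operatorname{diam}_{d_\mu}\mathcal P(\alpha)\le C_nD_i^2\to0$, and routing this through Corollary~\ref{cor:general-quantitative-rigidity} is the same argument. Your extra check that $\mathcal P(\alpha)$ is nonempty, for instance via a weak limit of the $\omega_i$, is a harmless addition that the paper leaves implicit.
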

	
	\begin{proof}
		Proposition~\ref{prop:general-green-rigidity} and Lemma
		\ref{lem:positive-approach-ratio} give
		\[
		\operatorname{diam}_{d_\mu}\mathcal P(\alpha)
		\leq C_nD_i^2
		\]
		for every $i$.  Letting $i\to\infty$ shows that this diameter is zero, so
		$\alpha$ is rigid.
	\end{proof}
	
	\begin{corollary}
		\label{cor:radial-green}
		Let $X$ be a compact Calabi--Yau manifold of complex dimension $n$, and let
		$0\neq\alpha\in\overline{\mathcal K}_X$ satisfy $\alpha^n=0$.  If
		$\alpha$ is nonrigid, then there is $c_\alpha>0$ such that, for every
		K\"ahler class $\kappa$,
		\[
		\diam\bigl(X,\CY(\alpha+\kappa)\bigr)\geq c_\alpha.
		\]
	\end{corollary}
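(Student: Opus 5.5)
The plan is to argue by contraposition straight from Proposition~\ref{prop:general-green-rigidity}, with Lemma~\ref{lem:positive-approach-ratio} controlling the cohomological factor. Since $\alpha$ is nonrigid, there are two distinct currents $T_0\neq T_1$ in $\mathcal P(\alpha)$. By the normalization in Subsection~\ref{subsec:fixed-measure}, their distance $\delta:=d_\mu(T_0,T_1)$ is strictly positive. It depends only on $\alpha$, on the two chosen currents, and on the fixed measure $\mu$. It does not depend on any K\"ahler class, because $\mu$ is the same for every Ricci-flat metric.

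Now fix an arbitrary $\kappa\in\Kah_X$ and set $\beta=\alpha+\kappa$. Lemma~\ref{lem:positive-approach-ratio} gives that $\beta$ is K\"ahler and
\[
\frac{\int_X\alpha\wedge\beta^{n-1}}{\int_X\beta^n}<1 .
\]
Proposition~\ref{prop:general-green-rigidity} applied to $\beta$ then yields
\[
\delta\le \operatorname{diam}_{d_\mu}\mathcal P(\alpha)\le C_n\,\diam\bigl(X,\CY(\alpha+\kappa)\bigr)^2 .
\]
Rearranging gives $\diam\bigl(X,\CY(\alpha+\kappa)\bigr)\ge c_\alpha:=(\delta/C_n)^{1/2}>0$. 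This constant depends only on $n$ and $\alpha$, so the bound is uniform in $\kappa$.

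There is no real obstacle here; the statement is a direct rephrasing of Theorem~\ref{thm:point-collapse-general-CY} made quantitative. The two points to check are the following.
\begin{itemize}
\item The constant $C_n$ in Proposition~\ref{prop:general-green-rigidity} must be independent of $\beta$. It is, because it comes from the Bando--Mabuchi Green function bound, which depends only on the dimension under $\Ric=0$.
\item The ratio bound must hold for every $\kappa$, not only for small $\kappa$. This is also the case, since Lemma~\ref{lem:positive-approach-ratio} imposes no smallness assumption.
\end{itemize}
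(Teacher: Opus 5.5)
Your proposal is correct and matches the paper's proof essentially line for line: you fix distinct $T_0,T_1\in\mathcal P(\alpha)$, set $\delta=d_\mu(T_0,T_1)>0$, and combine Proposition~\ref{prop:general-green-rigidity} with Lemma~\ref{lem:positive-approach-ratio} to get $\delta\le C_n\,\diam\bigl(X,\CY(\alpha+\kappa)\bigr)^2$, hence $c_\alpha=\sqrt{\delta/C_n}$. The argument is direct rather than by contraposition, but that is only a matter of labeling.
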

	
	\begin{proof}
		Choose distinct $T_0,T_1\in\mathcal P(\alpha)$ and set
		$\delta=d_\mu(T_0,T_1)>0$.  Proposition
		\ref{prop:general-green-rigidity} and Lemma
		\ref{lem:positive-approach-ratio} give
		\[
		\delta
		\leq
		C_n\diam\bigl(X,\CY(\alpha+\kappa)\bigr)^2.
		\]
		Thus one may take $c_\alpha=\sqrt{\delta/C_n}$.
	\end{proof}
	
	\section{Marked period exclusion}
	\label{sec:collapsing-marked-period}

	The Green function argument above is independent of dimension: on any compact
	Calabi--Yau manifold, collapse to a point along a single sequence $\alpha+\kappa_i$,
	with $\kappa_i$ K\"ahler and $\kappa_i\to0$, forces rigidity.
	
	Starting in this section, we restrict to K3 surfaces and turn to the converse
	implication in Theorem~\ref{thm:equivalence}.
	We first combine the regular fibration theorem with the small cycle analysis
	of Ouyang--Tian and the fixed K3 marking to exclude limits of dimensions two
	and three.  We then construct test functions in the remaining one-dimensional
	case.
	
	Let $(X_i,g_i)$ be hyperk\"ahler K3 surfaces such that
	\[
	0<d_-\leq\diam(X_i,g_i)\leq d_+<\infty,
	\qquad
	\Vol(X_i,g_i)\longrightarrow0,
	\qquad
	(X_i,g_i)\xrightarrow{\mathrm{GH}}(Z,d_Z).
	\]
	Fix Gromov--Hausdorff $\epsilon_i$-approximations $\psi_i:X_i\to Z$, with
	$\epsilon_i\to0$.
	
	\subsection{Curvature regularity and local fibrations}
	
	Cheeger--Tian $\varepsilon$-regularity
	\cite[Theorem 0.8]{CheegerTian} gives universal constants
	$\varepsilon,C>0$ such that, on every Ricci-flat manifold of real dimension
	four with compact $\overline{B_r(p)}$,
	\[
	\int_{B_r(p)}|\operatorname{Rm}|^2\,dV\leq\varepsilon
	\quad\Longrightarrow\quad
	\sup_{B_{r/2}(p)}|\operatorname{Rm}|
	\leq Cr^{-2}.
	\]
	No lower volume bound is required.  Let $S\subset Z$ be the set of limits of
	sequences $q_i\in X_i$ for which
	$|\operatorname{Rm}_{g_i}(q_i)|\to\infty$.  Since
	Chern--Gauss--Bonnet gives total curvature energy $192\pi^2$, each point of
	$S$ accounts for at least $\varepsilon$ of this energy.  Hence
	$\#S\leq192\pi^2/\varepsilon$.  Moreover, on $\psi_i^{-1}(K)$ for every
	$K\Subset Z\setminus S$, the curvature and all its covariant derivatives are
	uniformly bounded, compare \cite[Corollary 3.22]{SunZhang}.
	
	The following proposition is due to Sun--Zhang
	\cite[Theorem 3.25 and Appendix A]{SunZhang}.  The higher derivative
	estimates are supplied by their Appendix~A.  Compatibility with the fixed
	Gromov--Hausdorff approximations is retained from the
	Cheeger--Fukaya--Gromov construction underlying the theorem and will be used
	in Section~\ref{sec:positive-limits}.
	
	\begin{proposition}
		\label{prop:regular-fibration}
		Let $Q\Subset Z\setminus S$ be a connected compact domain with smooth
		boundary in a smooth stratum of dimension $d$.
		For all large $i$, there are compact domains $Q_i\subset X_i$ with
		$\partial Q_i=F_i^{-1}(\partial Q)$ and smooth fiber bundle maps
		\[
		F_i:Q_i\longrightarrow Q
		\]
		such that $F_i$ is a Gromov--Hausdorff $\tau_i$ approximation,
		$\tau_i\to0$, and
		\begin{equation}
			|\nabla F_i|_{g_i}
			+|\nabla^2F_i|_{g_i}
			\leq C_Q.
			\label{eq:fibration-derivatives}
		\end{equation}
		The maps may be chosen compatibly with the fixed Gromov--Hausdorff
		approximations in the quantitative sense that
		\begin{equation}
			\sup_{x\in Q_i}d_Z\bigl(F_i(x),\psi_i(x)\bigr)\longrightarrow0.
			\label{eq:fibration-GH-compatibility}
		\end{equation}
		Moreover, for every $K\Subset\operatorname{int}Q$,
		$\psi_i^{-1}(K)\subset Q_i$ for all large $i$.
		Every fiber is a compact infranilmanifold of dimension $4-d$.  In
		particular, when $d=1$ the fibers have dimension three and are of flat or
		Heisenberg type, up to a finite affine quotient.
	\end{proposition}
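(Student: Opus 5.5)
This proposition is essentially a repackaging of the Cheeger--Fukaya--Gromov collapsing theory as adapted to hyperk\"ahler four-manifolds by Sun--Zhang. The plan is therefore to check that its hypotheses hold on $\psi_i^{-1}(Q)$ and then read off the stated properties from their construction, rather than to reprove the fibration theorem.

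\textbf{Step 1: bounded geometry near $Q$.} Choose an open $U$ with $Q\Subset U\Subset Z\setminus S$. By the Cheeger--Tian $\varepsilon$-regularity recalled above, and by \cite[Corollary 3.22]{SunZhang}, $|\nabla^k\operatorname{Rm}_{g_i}|\le C_k$ on $\psi_i^{-1}(\overline U)$ for all $k$. Since $\Vol(X_i,g_i)\to0$ and the curvature is bounded, the injectivity radius tends to zero uniformly on this set. This is the bounded-curvature collapsing regime.

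\textbf{Step 2: construct the fibration.} Apply Cheeger--Fukaya--Gromov in the form used in \cite[Theorem 3.25]{SunZhang}. First pass to a nearby metric with uniformly bounded higher derivatives; Appendix~A of \cite{SunZhang} supplies these estimates directly for the Ricci-flat metric. Then take the frame bundle, whose limit is a smooth $O(4)$-manifold $Y$ with $Y/O(4)\supset U$. Over the smooth stratum of dimension $d$ containing $Q$, the $O(4)$-orbit type is constant. Hence the center-of-mass construction gives $O(4)$-equivariant fibrations of the frame bundles onto $Y$, and these descend to smooth submersions $F_i$ from a neighborhood of $\psi_i^{-1}(Q)$ onto a neighborhood of $Q$.

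\textbf{Step 3: read off the stated properties.} Define $Q_i:=F_i^{-1}(Q)$. Because $F_i$ is a submersion with $\partial Q$ smooth, $Q_i$ is a compact domain with $\partial Q_i=F_i^{-1}(\partial Q)$.
\begin{itemize}
\item The fibers are compact infranilmanifolds of dimension $4-d$. In the case $d=1$, the three-dimensional nilpotent groups are $\R^3$ or the Heisenberg group, which gives the flat or Heisenberg type up to a finite affine quotient.
\item The derivative bound \eqref{eq:fibration-derivatives} follows from the construction: $F_i$ is built from distance functions and center-of-mass averages, and these are controlled by the curvature bounds of Step 1 on the scale of the local universal covers. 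The $C^2$ bound uses the higher-derivative estimates from Appendix~A.
\item The fibers have diameter tending to zero, and $F_i$ is $\tau_i$-close to the original approximation used to build it. Hence $F_i$ is a $\tau_i$ Gromov--Hausdorff approximation.
\end{itemize}

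\textbf{Main obstacle: compatibility \eqref{eq:fibration-GH-compatibility}.} The CFG construction starts from \emph{some} GH approximation, so I would start it from $\psi_i$ itself. Concretely, define $F_i(x)$ as a Riemannian center of mass of points $z\in Q'$ (with $Q'$ a slightly larger domain) that are close to $\psi_i(x)$. Averaging in this way moves points by at most $O(\epsilon_i+\tau_i)$. This yields $\sup_{Q_i}d_Z(F_i,\psi_i)\to0$.

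The last claim then follows. Fix $K\Subset\operatorname{int}Q$ and let $\eta:=d_Z(K,\partial Q)>0$. If $\psi_i(x)\in K$ and $i$ is large, then $x$ lies in the domain of $F_i$ and $d_Z(F_i(x),K)<\eta$. Therefore $F_i(x)\in Q$, that is, $x\in Q_i$.

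The delicate point is making sure the equivariant smoothing does not destroy this closeness. I expect this to be the hardest step, and would handle it by tracking the $C^0$ displacement explicitly through each stage of the construction.
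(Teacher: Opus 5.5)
Your proposal is correct and follows the paper's route. The paper uses the same ingredients: a buffer domain inside $Z\setminus S$, Sun--Zhang's Theorem~3.25 with Appendix~A for the infranil bundle and the derivative bounds, $Q_i:=F_i^{-1}(Q)$, and interior coverage from $d_Z(K,\partial Q)>0$ together with the uniform closeness of $F_i$ to $\psi_i$. The compatibility step you flag as the main obstacle is settled in the paper simply by citing the closeness of the Cheeger--Fukaya--Gromov smoothed projection to its initial approximation \cite[(2.6.7)]{CGF}; the averaging there is over $\psi_i(y)$ for $y$ near $x$ in $X_i$, not over points $z$ near $\psi_i(x)$, since the latter would make $F_i$ factor through the nonsmooth map $\psi_i$.
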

	
	\begin{proof}
		Choose a compact domain $\widetilde Q$ with
		$Q\Subset\operatorname{int}\widetilde Q\Subset Z\setminus S$.
		The curvature bounds above allow us to apply Sun--Zhang's localized theorem
		over $\widetilde Q$.  Their construction gives the infranil fiber bundle
		and control as an almost Riemannian submersion
		\cite[Theorem 3.25]{SunZhang}, and their Appendix~A gives the stated derivative
		bounds.  The smoothed projection remains uniformly close to the original
		Gromov--Hausdorff approximation, as in the underlying
		Cheeger--Fukaya--Gromov construction
		\cite[(2.6.7)]{CGF}.  Restricting its full preimage to $Q$
		gives $Q_i$, the boundary identity, and
		\eqref{eq:fibration-GH-compatibility}.  If
		$K\Subset\operatorname{int}Q$, its positive distance from $\partial Q$,
		together with the buffer $\widetilde Q$, gives
		$\psi_i^{-1}(K)\subset Q_i$ for all large $i$.  No
		invariant hyperk\"ahler replacement of $g_i$ is used here.
	\end{proof}
	
	\subsection{Small primitive cycles in dimensions two and three}
	
	The following proposition is a consequence of Ouyang--Tian's analysis of
	limits of dimensions two and three
	\cite[Theorems 4.10 and 5.4]{OuyangTian}.  It provides the small cycles used
	in the marked period argument.
	
	\begin{proposition}
		\label{prop:small-period-cycles}
		Let $(X_i,g_i)$ be hyperk\"ahler K3 surfaces of unit diameter with
		hyperk\"ahler triples
		$\boldsymbol\omega_i=(\omega_{1,i},\omega_{2,i},\omega_{3,i})$ and volumes
		$V_i\to0$.  If their Gromov--Hausdorff limit has dimension two
		or three, then, after passing to a subsequence, there are embedded
		oriented tori $T_i\subset X_i$ with the following properties:
		\begin{enumerate}[label=\textup{(\roman*)}]
			\item $[T_i]\in H_2(X_i,\Z)$ is primitive and
			$T_i\cdot T_i=0$;
			\item for $a=1,2,3$,
			\begin{equation}
				\left|\int_{T_i}\omega_{a,i}\right|
				\leq CV_i.
				\label{eq:small-cycle-periods}
			\end{equation}
		\end{enumerate}
	\end{proposition}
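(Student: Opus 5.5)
The plan is to read off the tori $T_i$ directly from the regular fibration structure of Proposition~\ref{prop:regular-fibration}, supplemented by the finer description of two- and three-dimensional limits in \cite[Theorems 4.10 and 5.4]{OuyangTian}. The periods will then be bounded by areas, using only the comass estimate $|\omega_{a,i}|_{g_i}\le 1$ (each $\omega_{a,i}$ is a K\"ahler form for $g_i$ in a compatible complex structure). This gives
\[
\Bigl|\int_{T_i}\omega_{a,i}\Bigr|\le \operatorname{Area}_{g_i}(T_i).
\]
So everything reduces to producing embedded tori of area $O(V_i)$ whose homology classes are primitive and isotropic.

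\emph{Two-dimensional limits.} Pick a small closed disc $Q$ in the regular part of $Z$, away from the finite set $S$. Over $Q$, Proposition~\ref{prop:regular-fibration} gives a fibration $F_i:Q_i\to Q$ with two-dimensional infranil fibers. For hyperk\"ahler K3 these fibers are flat $2$-tori; this is where I would invoke \cite[Theorem 4.10]{OuyangTian} to exclude Klein-bottle quotients after a finite cover of $Q$.
\begin{itemize}
\item Area bound. Since $F_i$ is an almost Riemannian submersion with the bounds \eqref{eq:fibration-derivatives}, the coarea formula gives $\operatorname{Vol}(Q_i)\approx \operatorname{Area}(Q)\cdot\operatorname{Area}(F_i^{-1}(z))$, up to uniformly bounded factors. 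Hence any fiber $T_i:=F_i^{-1}(z)$ has area at most $CV_i$.
\item Isotropy. Two distinct fibers are disjoint and homologous, so $T_i\cdot T_i=0$.
\end{itemize}

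\emph{Three-dimensional limits.} Here \cite[Theorem 5.4]{OuyangTian} shows that, away from finitely many points, $X_i$ is modeled on a Gibbons--Hawking type circle bundle over a flat $3$-dimensional region. Choose a closed geodesic loop $\gamma\subset Z\setminus S$ of length at most $C$, which exists in the flat part, and set $T_i:=F_i^{-1}(\gamma)$, a circle bundle over $\gamma$. Its area is at most $\operatorname{length}(\gamma)$ times the fiber length. The fiber length is comparable to $V_i/\operatorname{Vol}(Z)$, since $Z$ has unit-size diameter and positive $3$-volume. Hence $\operatorname{Area}(T_i)\le CV_i$. To get $T_i\cdot T_i=0$, push $\gamma$ off itself to a parallel loop $\gamma'$ in the flat base, which gives a disjoint homologous torus.

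\emph{Primitivity, the main obstacle.} The area and self-intersection bounds are essentially formal; primitivity is not. A fiber class could a priori be a multiple $k[T']$, for instance if the fibration has multiple fibers, or the circle bundle over $\gamma$ covers another torus. My first attempt would be to replace $[T_i]$ by its primitive root $[T_i]/k$. This preserves the inequality \eqref{eq:small-cycle-periods}, since dividing by $k\ge1$ only shrinks the periods, and also preserves isotropy. The remaining issue is then only to represent the primitive class by an embedded torus. For this I would use the explicit local models in \cite{OuyangTian}, which exhibit the fiber torus as a primitive vanishing-type cycle in $H_2(X_i,\Z)$. If that fails, I would fall back on the fact that on a K3 surface any primitive isotropic class is represented by an embedded torus (e.g.\ via a special Lagrangian/elliptic fibration after hyperk\"ahler rotation). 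I would also check that the period bound survives for that representative, because the periods $\int\omega_{a,i}$ are cohomological and depend only on the class.

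Finally, pass to a subsequence so that the chosen base regions $Q$ (respectively loops $\gamma$) are fixed independently of $i$; this makes the constant $C$ uniform.
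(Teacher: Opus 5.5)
\paragraph{The gap: you never show $[T_i]\neq0$.}
Your ``primitive root'' step presupposes that $[T_i]\neq0$ in $H_2(X_i,\Z)$, and nothing in your argument establishes this. The map $F_i$ is a fibration only over a small region $Q$, or over a neighbourhood of $\gamma$. No local model of the collapse can decide whether its fibres, or $F_i^{-1}(\gamma)$, bound in $X_i$.

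If $[T_i]=0$, then \eqref{eq:small-cycle-periods} is vacuous and there is no primitive class to extract. Yet Lemma~\ref{lem:OT-scale} needs exactly a nonzero primitive isotropic $e_i$ with small projection.

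Your fallback does not help with this, and its justification is also off. A primitive isotropic class made of type $(1,1)$ by a hyperk\"ahler rotation need not be nef, so it need not be an elliptic fibre class. The claim that it is represented by an embedded torus is true, but it comes from diffeomorphisms of K3, not directly from a special Lagrangian fibration. For example, reduce to a nef class by the Weyl group and realize the reflections by Dehn twists along $(-2)$-spheres.

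\paragraph{How the paper handles this.}
The paper obtains nonvanishing and primitivity from the \emph{global} results of Ouyang--Tian, applied to specific tori.
\begin{itemize}
\item In dimension two, after hyperk\"ahler rotation $X_i$ carries a holomorphic elliptic fibration \cite[Theorem 4.10]{OuyangTian}. A smooth fibre is nonzero, since its $\omega_{3,i}$-period equals its area. It is primitive by \cite[Proposition 2.5]{OuyangTian}, and its period is bounded via $\operatorname{Area}(\mathbb P^1_i,h_i')=4V_i/\mu_i$.
\item In dimension three, $T_i$ is the preimage of one of Ouyang--Tian's circles. Its period vector $\varepsilon_i\bigl(\int_{p_0}^{p_0+v}dx_{a,i}\bigr)_a$ is nonzero, and its class is primitive by \cite[Lemma 5.6]{OuyangTian}.
\end{itemize}

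\paragraph{What your route still needs.}
You would first have to show that your $F_i$-fibres, respectively $F_i^{-1}(\gamma)$, are homologous to these tori.

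After that, your comass--coarea bound $|\int_T\omega_{a,i}|\le\operatorname{Area}_{g_i}(T)$ is a legitimate alternative to the paper's explicit period computations. It needs neither the holomorphic fibration nor the Gibbons--Hawking model. However, you must drop two claims: ``every fibre has area $\le CV_i$'' and ``the circle length is comparable to $V_i$ along $\gamma$''. Your fibres are not calibrated, so the derivative bounds control their size only on average. Instead, average the coarea formula over $z\in Q$, respectively over parallel loops filling a tubular neighbourhood of $\gamma$, and pick a good fibre or loop.

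Finally, the three-dimensional limit is $T^3/\Z_2$, whose regular part is non-orientable. So $\gamma$ must be chosen orientation-preserving, for instance the image of a closed geodesic of $T^3$ avoiding the fixed points. Otherwise $F_i^{-1}(\gamma)$ is a Klein bottle.
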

	
	\begin{proof}
		Suppose first that the limit has dimension three.  In the proof of
		\cite[Theorem 5.4]{OuyangTian}, the inverse image of any one of three
		independent circles in the regular part of the limiting flat orbifold is
		an embedded torus $T_i$.  Its normal bundle is trivial, so
		$T_i\cdot T_i=0$, and its class is primitive by
		\cite[Lemma 5.6]{OuyangTian}.  If $\varepsilon_i$ denotes the period of
		the collapsing circle, the period computation in that proof gives
		\[
		\int_{T_i}\omega_{a,i}
		=\varepsilon_i\int_{p_0}^{p_0+v}dx_{a,i},
		\qquad
		\int_{X_i}\omega_{a,i}^2\asymp\varepsilon_i.
		\]
		The line integrals are bounded, while
		$\int_{X_i}\omega_{a,i}^2=2V_i$.  Thus
		$\varepsilon_i\asymp V_i$, which proves
		\eqref{eq:small-cycle-periods}.
		
		Suppose next that the limit has dimension two.  An $SO(3)$ hyperk\"ahler
		rotation preserves the Euclidean norm of the period vector, so it is enough
		to prove the estimate for the rotated triple.  After such a
		rotation, \cite[Theorem 4.10]{OuyangTian} gives an elliptic fibration
		\[
		\pi_i:X_i\longrightarrow\mathbb P_i^1.
		\]
		A smooth fiber $T_i$ has primitive class by
		\cite[Proposition 2.5]{OuyangTian} and self-intersection zero.  With the
		rotation chosen so that
		$\Omega_i=\omega_{1,i}+i\omega_{2,i}$ is holomorphic, put
		\[
		\mu_i=\int_{T_i}\omega_{3,i}>0.
		\]
		The restriction of $\Omega_i$ to $T_i$ vanishes, so the other
		two periods are zero.  Let $h_i'$ be the generalized Kähler--Einstein
		metric on the base associated with
		$\Omega_i/\sqrt{\mu_i}$.  Fiber integration gives
		\[
		\operatorname{Area}(\mathbb P_i^1,h_i')
		=\frac{4V_i}{\mu_i}.
		\]
		Choose $U_0\Subset U$ in the regular region with positive limit area, and
		let $f_i:U\to\mathbb P_i^1$ be the local identifications of
		\cite[(4.11)]{OuyangTian}.  By \cite[Lemma 4.13]{OuyangTian},
		$f_i^*h_i'$ converges in $C^{0,\alpha}$ on $U$ to the limit metric of
		dimension two.  Thus $\operatorname{Area}(f_i(U_0),h_i')\geq c>0$.  Hence
		$\mu_i\leq4c^{-1}V_i$, proving
		\eqref{eq:small-cycle-periods} also in this case.
	\end{proof}
	
	With these geometric inputs in place, we turn to the marked period plane and
	the arithmetic obstruction.
	
	\subsection{The marked period plane and its lattice minimum}
	
	We first fix the notation and normalizations.  The marked second cohomology
	lattice is
	\[
	\Lambda=H^2(X,\Z),\qquad
	q(v,w)=\int_X v\wedge w.
	\]
	It is the even unimodular K3 lattice of signature $(3,19)$.  We call
	$e\in\Lambda$ \emph{primitive} if $e=ke'$ with $e'\in\Lambda$ and
	$k\in\Z$ implies $k=\pm1$, and \emph{isotropic} if $q(e,e)=0$.
	
	Fix a nonzero holomorphic two-form $\Omega$ and put
	\[
	H=\operatorname{span}_{\R}\{\operatorname{Re}\Omega,
	\operatorname{Im}\Omega\}.
	\]
	The restriction of $q$ to $H$ is positive definite.  Real $(1,1)$-classes
	are precisely the vectors orthogonal to $H$, so
	\[
	N:=H^\perp\subset\Lambda_\R
	\]
	has signature $(1,19)$ and contains every $\beta_i$ and $\alpha$.
	
	Let $\beta_i\to\alpha$ be Kähler classes and set
	\[
	q_i=q(\beta_i,\beta_i)>0,\qquad
	u_i=\frac{\beta_i}{\sqrt{q_i}},\qquad
	P_i=H\oplus\R u_i.
	\]
	Thus $q(u_i,u_i)=1$, and $P_i$ is an oriented positive three-plane.  This
	plane has the following metric interpretation.  Complete the Ricci-flat
	Kähler form in $\beta_i$
	to a hyperk\"ahler triple $\boldsymbol\omega_i=(\omega_{1,i},
	\omega_{2,i},\omega_{3,i})$, with $\omega_{3,i}$ in the class $\beta_i$.
	The first two cohomology classes span $H$, and therefore
	\[
	P_i=\operatorname{span}_{\R}\{[\omega_{1,i}],[\omega_{2,i}],
	[\omega_{3,i}]\}.
	\]
	Multiplying the metric by a positive constant multiplies all three
	Kähler forms by that constant and does not change $P_i$.  Hyperkähler
	rotation only changes the orthonormal basis of this same plane.
	
	For an arbitrary positive three-plane $P\subset\Lambda_\R$, write the
	$q$-orthogonal decomposition as
	\[
	v=v_P+v_{P^\perp}.
	\]
	Since $q$ is positive definite on $P$ and negative definite on $P^\perp$,
	the formula
	\begin{equation}
		\|v\|_P^2=q(v_P,v_P)-q(v_{P^\perp},v_{P^\perp})
		\label{eq:Hodge-norm}
	\end{equation}
	defines a positive definite norm, called the Hodge norm associated with
	$P$.  If $q(e,e)=0$, then
	\[
	q(e_P,e_P)+q(e_{P^\perp},e_{P^\perp})=0,
	\]
	and hence
	\begin{equation}
		\|e\|_P^2=2q(e_P,e_P)
		=2\|\operatorname{pr}_Pe\|_q^2.
		\label{eq:isotropic-Hodge-projection}
	\end{equation}
	Here $\|\cdot\|_q$ denotes the Euclidean norm defined by the positive form
	$q|_P$.
	
	Define the shortest primitive isotropic projection by
	\begin{equation}
		m(P)=\min_{\substack{e\in\Lambda\ \mathrm{primitive}\\q(e,e)=0}}
		\|\operatorname{pr}_Pe\|_q.
		\label{eq:lattice-minimum}
	\end{equation}
	This set is nonempty because the K3 lattice contains a hyperbolic plane
	$U$, and a standard generator of $U$ is primitive and isotropic.  The
	minimum is positive and is attained.  Indeed, $\|\cdot\|_P$ is a
	positive definite norm on the discrete lattice $\Lambda$, so every
	$\|\cdot\|_P$-bounded set contains only finitely many lattice points,
	now use \eqref{eq:isotropic-Hodge-projection}.
	
	Geometric periods are related to orthogonal projection as follows.  Let $g$
	be a hyperk\"ahler metric of volume $V$ and
	let $\boldsymbol\omega=(\omega_1,\omega_2,\omega_3)$ be its hyperk\"ahler
	triple.  Our convention gives
	\[
	q([\omega_a],[\omega_b])
	=\int_X\omega_a\wedge\omega_b=2V\delta_{ab}.
	\]
	Thus $p_a=[\omega_a]/\sqrt{2V}$ is a $q$-orthonormal basis of its period
	plane $P$.  If an oriented two-cycle $T$ has Poincaré dual $e$, then
	\begin{equation}
		\|\operatorname{pr}_Pe\|_q^2
		=\sum_{a=1}^3q(e,p_a)^2
		=\frac{1}{2V}\sum_{a=1}^3
		\left(\int_T\omega_a\right)^2.
		\label{eq:period-projection}
	\end{equation}
	Formula \eqref{eq:period-projection} is the normalization used below.
	
	\subsection{The arithmetic lower bound}
	
	The fixed marking enters through the following lemma.
	
	\begin{lemma}
		\label{lem:arithmetic}
		If $\R\alpha\cap H^2(X,\Q)=\{0\}$, then
		\[
		\frac{m(P_i)}{\sqrt{q_i}}\longrightarrow\infty.
		\]
	\end{lemma}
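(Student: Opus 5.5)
The idea is to make the Hodge norm of $P_i$ explicit and show that a bounded ratio would force $\alpha$ onto a rational isotropic ray. Since $P_i=H\oplus\R u_i$ is an orthogonal sum with $u_i\in N=H^\perp$ a unit vector, every $e\in\Lambda$ satisfies
\[
\|\operatorname{pr}_{P_i}e\|_q^2=\|e_H\|_q^2+q(e,u_i)^2
=\|e_H\|_q^2+\frac{q(e,\beta_i)^2}{q_i}.
\]
Here $e_H$ is the orthogonal projection of $e$ to $H$. We argue by contradiction. Suppose that, along a subsequence, $m(P_i)\le M\sqrt{q_i}$, attained by primitive isotropic $e_i\in\Lambda$. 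The displayed identity then gives
\[
\|(e_i)_H\|_q^2\le M^2q_i\to0,
\qquad
|q(e_i,\beta_i)|\le Mq_i\to0,
\]
because $q_i\to q(\alpha,\alpha)=0$.

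The main step, and the expected obstacle, is to show that the $e_i$ stay in a bounded subset of $\Lambda_\R$, measured in a fixed Hodge norm. The natural frame $u_i$ degenerates as $u_i\to\infty$ along the light cone, so the bounds must be transferred to a fixed frame. Fix a unit timelike vector $u_0\in N$ and write $e_i=(e_i)_H+a_iu_i+w_i$ with $w_i\in N\cap u_i^\perp$ and $a_i=q(e_i,u_i)$. Then $|a_i|\le M\sqrt{q_i}$. Isotropy gives
\[
0=\|(e_i)_H\|_q^2+a_i^2-\bigl(-q(w_i,w_i)\bigr),
\]
so the $u_i$-frame norm of the $N$-component satisfies
\[
\|(e_i)_N\|_{u_i}^2=a_i^2-q(w_i,w_i)\le 3M^2q_i.
\]
Next we compare the Euclidean norms on $N$ determined by the timelike unit vectors $u_i$ and $u_0$. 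A standard Lorentz boost estimate gives $\|x\|_{u_0}\le 2\gamma_i\|x\|_{u_i}$, where $\gamma_i=q(u_0,u_i)=q(u_0,\beta_i)/\sqrt{q_i}$. Since $q(u_0,\beta_i)\to q(u_0,\alpha)$ is bounded, $\gamma_i\le C/\sqrt{q_i}$. Hence
\[
\|(e_i)_N\|_{u_0}\le 2\cdot\frac{C}{\sqrt{q_i}}\cdot\sqrt3\,M\sqrt{q_i}=2\sqrt3\,CM.
\]
Together with $\|(e_i)_H\|_q\to0$, this shows the $e_i$ lie in a bounded subset of $\Lambda_\R$.

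Because $\Lambda$ is discrete, only finitely many $e_i$ occur, so after passing to a further subsequence $e_i=e$ is constant. Letting $i\to\infty$ in the two bounds gives $e_H=0$ and $q(e,\alpha)=0$. The first says $e\in N\cap\Lambda$, so $e$ is an integral, primitive, isotropic class in the Lorentzian space $N$ of signature $(1,19)$. The vectors $e$ and $\alpha$ are nonzero, isotropic, and mutually orthogonal in a space of signature $(1,19)$, so they are proportional: otherwise they would span a totally isotropic plane, which is impossible in signature $(1,n)$. Hence $\alpha\in\R e$ with $e\in H^2(X,\Z)$, contradicting $\R\alpha\cap H^2(X,\Q)=\{0\}$. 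Since $M$ was arbitrary, $m(P_i)/\sqrt{q_i}\to\infty$.
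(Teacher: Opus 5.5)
Your proof is correct and follows essentially the same route as the paper: a Lorentz-boost comparison of the varying period-plane norms with a fixed reference norm (the factor $\cosh r_i=q(u_0,\beta_i)/\sqrt{q_i}$ absorbs the $\sqrt{q_i}$), discreteness of $\Lambda$ to freeze $e_i=e$, and proportionality of orthogonal isotropic vectors in signature $(1,19)$. The only cosmetic difference is that you split off the $H$-component and use isotropy by hand to bound the $N$-part in the $u_i$-frame, which is exactly the identity $\|e\|_{P_i}^2=2\|\operatorname{pr}_{P_i}e\|_q^2$ that the paper invokes.
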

	
	\begin{proof}
		Choose a $q$-unit vector $u_*\in N$ in the same component of the positive
		cone as the Kähler cone and put $P_*=H\oplus\R u_*$.  We write
		$\|\cdot\|_*=\|\cdot\|_{P_*}$.  We first compare this norm with the
		varying norms $\|\cdot\|_{P_i}$.
		
		Since $u_i$ and $u_*$ are unit positive vectors in the Lorentzian space
		$N$, there are $r_i\ge0$ and a vector $w_i\in u_*^\perp\cap N$ such that
		\[
		q(w_i,w_i)=-1,
		\qquad
		u_i=(\cosh r_i)u_*+(\sinh r_i)w_i.
		\]
		When $r_i=0$, the choice of $w_i$ is immaterial.  Put
		$L_i=\operatorname{span}_{\R}\{u_*,w_i\}$, and define $B_i\in O(q)$ to be
		the identity on $L_i^{\perp_q}$ and to act on $L_i$ by
		\[
		B_iu_*=u_i=(\cosh r_i)u_*+(\sinh r_i)w_i,
		\qquad
		B_iw_i=(\sinh r_i)u_*+(\cosh r_i)w_i.
		\]
		Since $H\subset L_i^{\perp_q}$, the map $B_i$ fixes $H$ and carries $P_*$
		to $P_i$.  A $q$-isometry carrying one positive plane to another also carries
		the corresponding $q$-orthogonal decompositions in
		\eqref{eq:Hodge-norm} to one another.  Consequently,
		\[
		\|B_ix\|_{P_i}=\|x\|_*,
		\qquad\text{or equivalently}\qquad
		\|v\|_{P_i}=\|B_i^{-1}v\|_*.
		\]
		For the null vectors $\ell_\pm=u_*\pm w_i$, one has
		$B_i\ell_\pm=e^{\pm r_i}\ell_\pm$.  In the positive definite inner
		product associated with $\|\cdot\|_*$, the vectors $u_*$ and $w_i$ are
		orthonormal: $u_*\in P_*$, while $w_i\in P_*^\perp$ and the sign of $q$ is
		reversed on $P_*^\perp$.  Thus $\ell_+/\sqrt{2}$ and
		$\ell_-/\sqrt{2}$ are orthonormal, and their orthogonal complement is
		$L_i^{\perp_q}$, where $B_i$ is the identity.  Hence
		$\|B_i\|_{*\to *}=e^{r_i}$.  Applying this to $B_i^{-1}v$ gives
		\begin{equation}
			\|v\|_*
			=\|B_i(B_i^{-1}v)\|_*
			\le e^{r_i}\|B_i^{-1}v\|_*
			=e^{r_i}\|v\|_{P_i}
			\qquad(v\in\Lambda_\R).
			\label{eq:boost-comparison}
		\end{equation}
		Moreover,
		\[
		\cosh r_i=q(u_i,u_*)
		=\frac{q(\beta_i,u_*)}{\sqrt{q_i}}.
		\]
		Since $q(\beta_i,u_*)\to q(\alpha,u_*)>0$, the sequence
		$q(\beta_i,u_*)$ is bounded.  Since $e^{r_i}\le2\cosh r_i$,
		\eqref{eq:boost-comparison}
		implies
		\begin{equation}
			\|v\|_*\le \frac{C}{\sqrt{q_i}}\|v\|_{P_i}.
			\label{eq:norm-comparison}
		\end{equation}
		
		Suppose the conclusion of the lemma were false.  After passing to a
		subsequence, there would be a constant $A$ and primitive isotropic
		vectors $e_i\in\Lambda$ realizing the minimum such that
		\[
		\|\operatorname{pr}_{P_i}e_i\|_q=m(P_i)
		\le A\sqrt{q_i}.
		\]
		Equations \eqref{eq:isotropic-Hodge-projection} and
		\eqref{eq:norm-comparison} give
		\[
		\|e_i\|_*
		\le \frac{C}{\sqrt{q_i}}m(P_i)
		\le CA.
		\]
		Only finitely many vectors of $\Lambda$ lie in this fixed ball.  Passing
		to another subsequence, we may therefore assume $e_i=e\ne0$.
		
		Because $q_i\to q(\alpha,\alpha)=0$, the displayed upper bound on
		$m(P_i)$ tends to zero.  Since $P_i=H\oplus\R u_i$ is a
		$q$-orthogonal sum, the $H$-component of
		$\operatorname{pr}_{P_i}e$ is the fixed vector
		$\operatorname{pr}_He$, and
		\[
		\|\operatorname{pr}_He\|_q
		\leq\|\operatorname{pr}_{P_i}e\|_q
		=m(P_i)\longrightarrow0.
		\]
		Thus $\operatorname{pr}_He=0$, or equivalently $e\in H^\perp=N$.  Its
		remaining component along $u_i$ also tends to zero:
		\[
		|q(e,u_i)|\le\|\operatorname{pr}_{P_i}e\|_q
		\longrightarrow0.
		\]
		Consequently
		\[
		q(e,\alpha)=\lim_{i\to\infty}q(e,\beta_i)
		=\lim_{i\to\infty}\sqrt{q_i}\,q(e,u_i)=0.
		\]
		
		It remains to use a basic fact about Lorentzian spaces.  In a vector space
		of signature $(1,n)$, two nonzero isotropic vectors which are orthogonal
		are proportional.  For completeness, choose coordinates in which
		$q(x,x)=x_0^2-|x'|^2$.  If $x$ and $y$ are isotropic and $q(x,y)=0$, then
		$|x_0|=|x'|$, $|y_0|=|y'|$, and equality holds in the Euclidean
		Cauchy--Schwarz inequality for $x'$ and $y'$, hence $x'$ and $y'$ are
		proportional, with the same proportionality for the zeroth coordinates.
		Applying this to $e,\alpha\in N$ shows that $e=c\alpha$.  Since
		$0\ne e\in H^2(X,\Z)$, the ray $\R\alpha$ is rational, contradicting the
		hypothesis.
	\end{proof}
	
	\subsection{The geometric upper bound and the exclusion of dimensions two and three}
	
	We first translate Proposition~\ref{prop:small-period-cycles} into a
	period plane estimate with explicit diameter dependence.  This form applies,
	in particular, to collapsing sequences whose diameters remain bounded above
	and below. In the argument below, only the lower bound is used.
	
	\begin{lemma}
		\label{lem:OT-scale}
		Let $(X_i,g_i)$ be hyperk\"ahler K3 surfaces, and put
		\[
		D_i=\diam(X_i,g_i),
		\qquad
		V_i=\Vol(X_i,g_i).
		\]
		Suppose that $D_i^{-4}V_i\to0$.  Fix markings
		$H^2(X_i,\Z)\cong\Lambda$, and denote the period planes by $P_i$.  If
		the metrics $D_i^{-2}g_i$, normalized to unit diameter, have a
		Gromov--Hausdorff limit of
		dimension two or three, then, after passing to a subsequence, there are
		primitive isotropic vectors $e_i\in\Lambda$ such that
		\[
		\|\operatorname{pr}_{P_i}e_i\|_q
		\leq C D_i^{-2}\sqrt{V_i}.
		\]
		In particular, $m(P_i)\leq C D_i^{-2}\sqrt{V_i}$.
	\end{lemma}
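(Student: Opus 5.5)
The plan is to rescale to unit diameter, apply Proposition~\ref{prop:small-period-cycles}, and then undo the scaling with the period formula \eqref{eq:period-projection}, using the fact that $P_i$ is unchanged by scaling.

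\textbf{Rescaling.} Put $\tilde g_i=D_i^{-2}g_i$. Then $\diam(X_i,\tilde g_i)=1$ and
\[
\tilde V_i:=\Vol(X_i,\tilde g_i)=D_i^{-4}V_i\to0.
\]
The hyperk\"ahler triple of $\tilde g_i$ is $\tilde{\boldsymbol\omega}_i=D_i^{-2}\boldsymbol\omega_i$. By the discussion preceding \eqref{eq:Hodge-norm}, the period plane of $\tilde g_i$ is still $P_i$, since multiplying the metric by a constant does not change the span of the three K\"ahler classes. By hypothesis, the unit-diameter sequence has a Gromov--Hausdorff limit of dimension two or three, so Proposition~\ref{prop:small-period-cycles} applies directly to $(X_i,\tilde g_i)$. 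After passing to a subsequence, it gives embedded tori $T_i$ with primitive classes and $T_i\cdot T_i=0$, and
\[
\Bigl|\int_{T_i}\tilde\omega_{a,i}\Bigr|\le C\tilde V_i,\qquad a=1,2,3.
\]

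\textbf{From cycles to lattice vectors.} Let $e_i\in\Lambda$ be the image under the fixed marking of the Poincar\'e dual of $[T_i]$. Poincar\'e duality and the marking are lattice isometries, so $e_i$ is primitive and $q(e_i,e_i)=T_i\cdot T_i=0$; hence $e_i$ is primitive isotropic. Applying \eqref{eq:period-projection} to the triple $\tilde{\boldsymbol\omega}_i$, which has volume $\tilde V_i$ and period plane $P_i$, gives
\[
\|\operatorname{pr}_{P_i}e_i\|_q^2=\frac1{2\tilde V_i}\sum_{a}\Bigl(\int_{T_i}\tilde\omega_{a,i}\Bigr)^2\le \frac{3C^2\tilde V_i^2}{2\tilde V_i}=\tfrac32C^2\tilde V_i .
\]
Therefore $\|\operatorname{pr}_{P_i}e_i\|_q\le C'\sqrt{\tilde V_i}=C'D_i^{-2}\sqrt{V_i}$. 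The statement $m(P_i)\le C'D_i^{-2}\sqrt{V_i}$ then follows from definition \eqref{eq:lattice-minimum}, since $e_i$ competes in that minimum.

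\textbf{Main obstacle.} The delicate point is making sure Proposition~\ref{prop:small-period-cycles} applies verbatim. That proposition is stated for unit-diameter hyperk\"ahler K3 surfaces with volumes tending to zero. The hypothesis $D_i^{-4}V_i\to0$ is exactly what guarantees this after rescaling, and its constant $C$ is uniform along the subsequence.

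A second point is that the projection formula uses the $q$-orthonormal basis $[\tilde\omega_a]/\sqrt{2\tilde V_i}$ of $P_i$. This is a basis of the same plane whichever hyperk\"ahler rotation is used, so the rotation performed inside the proof of the proposition does not matter: $\|\operatorname{pr}_{P_i}e\|_q$ is rotation invariant.

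Finally, the marking of each $X_i$ only enters through the identification of $H^2$ with $\Lambda$, and primitivity and isotropy are invariant under that identification.
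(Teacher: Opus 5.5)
Your proposal is correct and follows the paper's proof essentially verbatim: rescale to unit diameter, note that $P_i$ is unchanged by scaling, apply Proposition~\ref{prop:small-period-cycles}, take Poincar\'e duals under the marking, and insert \eqref{eq:small-cycle-periods} into \eqref{eq:period-projection}. Your extra remarks on tracking the constant and on the invariance of $\|\operatorname{pr}_{P_i}e\|_q$ under hyperk\"ahler rotation are accurate; the paper makes the rotation point in the proof of Proposition~\ref{prop:small-period-cycles} rather than in this lemma.
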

	
	\begin{proof}
		Put
		\[
		\widehat g_i=D_i^{-2}g_i,
		\qquad
		\widehat{\boldsymbol\omega}_i=D_i^{-2}\boldsymbol\omega_i,
		\qquad
		\widehat V_i=D_i^{-4}V_i,
		\]
		where $\boldsymbol\omega_i$ is a hyperk\"ahler triple for $g_i$.  Multiplying
		all three components by the same positive constant leaves the period plane
		$P_i$ unchanged.  Let $T_i$ be given by Proposition
		\ref{prop:small-period-cycles} for the rescaled sequence, and let
		$e_i$ be its Poincaré dual under the chosen marking.  Then $e_i$ is
		primitive and isotropic.  Equations \eqref{eq:period-projection} and
		\eqref{eq:small-cycle-periods} give
		\[
		\|\operatorname{pr}_{P_i}e_i\|_q^2
		=\frac{1}{2\widehat V_i}
		\sum_{a=1}^3
		\left(\int_{T_i}\widehat\omega_{a,i}\right)^2
		\leq C\widehat V_i
		=C D_i^{-4}V_i.
		\]
		Taking square roots proves the displayed estimate, and the assertion about
		$m(P_i)$ follows from its definition.
	\end{proof}
	
	\begin{proof}[Proof of Theorem~\ref{thm:dimension-exclusion}]
		Suppose, to the contrary, that a subsequence of the metrics in the statement,
		normalized to unit diameter, converges to a limit of dimension two or three.
		Put
		$D_i=\diam(X,g_i)$ and $V_i=\Vol(X,g_i)=q_i/2$.  Since $q_i\to0$ and
		$D_i$ is bounded below,
		\[
		D_i^{-4}V_i=D_i^{-4}\frac{q_i}{2}\longrightarrow0.
		\]
		Lemma~\ref{lem:OT-scale} therefore gives
		\[
		m(P_i)\le CD_i^{-2}\sqrt{\frac{q_i}{2}}
		\le C'\sqrt{q_i},
		\]
		contradicting Lemma~\ref{lem:arithmetic}.
	\end{proof}
	
	\section{Limits of positive dimension produce nonrigidity}
	\label{sec:positive-limits}
	
	In this section we prove \textup{(i)}$\Rightarrow$\textup{(ii)} in
	Theorem~\ref{thm:equivalence}.  It is enough to show that
	$\alpha$ is nonrigid whenever there is a sequence of Kähler classes
	$\beta_i\to\alpha$ with $\liminf_{i\to\infty}D(\beta_i)>0$.  There are two
	cases. A rational nef isotropic ray is nonrigid by an elementary argument from
	algebraic geometry.  For an irrational ray, Theorem
	\ref{thm:dimension-exclusion} removes the case with limits of dimensions two and three. Thus it remains only to consider a one-dimensional
	limit.
	
	\begin{lemma}
		\label{lem:rational-ray-nonrigid}
		Let $0\ne\alpha\in\overline{\Kah_X}$ satisfy $\alpha^2=0$.  If
		\[
		\R\alpha\cap H^2(X,\Q)\ne\{0\},
		\]
		then $\alpha$ is nonrigid.
	\end{lemma}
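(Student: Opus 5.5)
Rescale $\alpha$ so that it is a primitive integral class $e\in\NS(X)$ with $e^2=0$. Since $\alpha$ is a $(1,1)$-class, $e$ is of type $(1,1)$, so $X$ is projective. The plan is to show that $e$ is the class of a fiber of an elliptic fibration $\pi:X\to\mathbb P^1$. A positive current can then be produced from any positive measure on the base, which gives many distinct currents in $\alpha$.

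Step one is to make $e$ effective and nef as a line bundle. Riemann--Roch gives $\chi(\mathcal O_X(e))=2+e^2/2=2$, so $h^0(e)+h^0(-e)\ge2$. Because $\alpha$ is nef and $\Kah_X\ne\emptyset$, we have $q(e,\kappa)\ge0$ for every Kähler class $\kappa$. If $q(e,\kappa)>0$, then $-e$ is not effective, so $h^0(e)\ge2$. The case $q(e,\kappa)=0$ for all $\kappa$ would force $e=0$, which is excluded. Moreover $e$ is nef on curves, since $\alpha$ lies in $\overline{\Kah_X}$.

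Step two invokes the standard fact that on a K3 surface a nef, primitive, isotropic class $e\neq0$ is base point free and $|e|$ defines an elliptic fibration $\pi:X\to\mathbb P^1$ with $e=[\pi^*\mathcal O(1)]$ (see \cite[Chapter~2, Proposition~3.10]{HuybrechtsK3}). This is the main input, and it is where nefness is used: it rules out fixed components consisting of $(-2)$-curves. A tempting shortcut is to use only $h^0(e)\ge2$ and take a pencil. That does not suffice, because the moving part need not be isotropic, so I would cite the known result rather than reprove it.

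Step three constructs the currents. For any probability measure $\nu$ on $\mathbb P^1$, the current $T_\nu=\pi^*\nu$, i.e.\ $\int_{\mathbb P^1}[\pi^{-1}(s)]\,d\nu(s)$, is a closed positive $(1,1)$-current. Its class is $e$, since all fibers are cohomologous to $e$. Taking $\nu$ to be two different Dirac masses gives the currents of integration $[\pi^{-1}(s_1)]\neq[\pi^{-1}(s_2)]$ for $s_1\ne s_2$. These differ because their supports are disjoint. Alternatively, one can pull back the Fubini--Study form and a Dirac mass. Scaling back to $\alpha$ shows that $\mathcal P(\alpha)$ has at least two elements, so $\alpha$ is nonrigid.
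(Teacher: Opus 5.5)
Your overall route works, but you dismiss a valid argument, and one claim in Step one is false.

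\textbf{The shortcut you reject is the paper's proof.} From $h^0(L)\ge 2$, which you also obtain, take nonproportional sections $s_0,s_1\in H^0(X,L)$. Their zero divisors $D_0,D_1$ are distinct: otherwise $s_0/s_1$ would be a nowhere-vanishing holomorphic function on the compact surface $X$, hence constant. The integration currents $[D_0]\neq[D_1]$ are then two closed positive currents in the class $e$, and rescaling by $\lambda^{-1}$ shows that $\alpha$ is nonrigid.

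Nonrigidity only requires two distinct currents. Whether $|e|$ has a fixed part, or whether its moving part is isotropic, does not matter, so your Step two can be deleted. What the elliptic fibration adds is structure: the convex family $\{\pi^*\nu\}$, indexed by all probability measures $\nu$ on $\mathbb P^1$, shows that $\mathcal P(\alpha)$ is infinite-dimensional. The paper's argument uses only Riemann--Roch and Serre duality, so it applies verbatim to every compact K3 surface.

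\textbf{The projectivity claim is false.} The statement ``$e$ is of type $(1,1)$, so $X$ is projective'' does not hold. Projectivity requires a class of positive square in $\NS(X)$, and here $e^2=0$. For example, surjectivity of the period map gives a K3 surface with $\NS(X)=\Z f$ and $f^2=0$. It has no $(-2)$-classes, so $\Kah_X$ is a component of the positive cone and one of $\pm f$ is nef, yet $X$ is not projective; it has algebraic dimension one.

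The lemma is stated, and used in Proposition~\ref{prop:positive-limit}, for arbitrary K3 surfaces. Your citation of \cite[Chapter~2, Proposition~3.10]{HuybrechtsK3} concerns algebraic K3 surfaces, so as written Step two does not cover the nonprojective case. The fibration statement does remain true for all compact K3 surfaces (cf.\ \cite{BHPV}). In the nonalgebraic case, $\NS(X)$ is negative semidefinite with an isotropic kernel, and this replaces the Hodge index step. The gap is therefore repairable, but the simplest repair is to drop Step two and argue with two members of $|e|$ as above.
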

	
	\begin{proof}
		Choose $\lambda>0$ such that
		$e:=\lambda\alpha\in H^2(X,\Z)$ is primitive.  Since $e$ is an integral
		$(1,1)$-class, there is a line bundle $L$ with $c_1(L)=e$.  It is
		nontrivial and nef, and Riemann--Roch on a K3 surface gives
		\[
		\chi(X,L)=2+\frac12e^2=2.
		\]
		If $L^{-1}$ had a nonzero section, its divisor would be an effective
		representative of $-e$, which has negative intersection with every
		Kähler class.  This is impossible.  Hence, by Serre duality,
		$h^2(X,L)=h^0(X,L^{-1})=0$, and Riemann--Roch yields
		$h^0(X,L)=2+h^1(X,L)\geq2$.
		
		Choose nonproportional sections $s_0,s_1\in H^0(X,L)$.  Their zero
		divisors $D_0,D_1$ are distinct: otherwise $s_0/s_1$ would be a meromorphic
		function with neither zeros nor poles, hence holomorphic and constant.  The
		integration currents $[D_0]$ and $[D_1]$ are therefore distinct closed
		positive currents in the class $e$.  Scaling them by $\lambda^{-1}$
		gives two distinct positive currents in $\alpha$.
	\end{proof}
	
	\subsection{Test functions in a one-dimensional collapse}
	
	We now isolate the consequence of Sun--Zhang's localized regular fibration
	theorem that is used in the proof of Proposition
	\ref{prop:positive-limit}.  
	
	\begin{lemma}
		\label{lem:interval-test-functions}
		Let $(X_i,g_i)$ be hyperk\"ahler K3 surfaces satisfying
		\[
		0<d_-\leq\diam(X_i,g_i)\leq d_+<\infty,
		\qquad
		\Vol(X_i,g_i)\longrightarrow0,
		\]
		and suppose that
		\[
		(X_i,g_i)\xrightarrow{\mathrm{GH}}(Z,d_Z),
		\]
		where $Z$ is a compact interval.  Put
		\[
		\mu_i=\frac{dV_{g_i}}{\Vol(X_i,g_i)}.
		\]
		Fix Gromov--Hausdorff $\epsilon_i$-approximations $\psi_i:X_i\to Z$
		realizing this convergence, and let $S\subset Z$ be the curvature
		concentration set for $g_i$.  Choose compact intervals with nonempty interior
		\[
		Q\Subset\operatorname{int}Q'
		\Subset\operatorname{int}Z\setminus S
		\]
		and a function
		\[
		h\in C_c^\infty(\operatorname{int}Q),
		\qquad 0\leq h\leq1.
		\]
		Assume that, for $a=0,1$, there is a nonempty open interval
		$U_a\Subset\operatorname{int}Q$ on which $h\equiv a$.
		Extend $h$ by zero to a smooth function on $Q'$.  Let
		$F_i:Q_i'\to Q'$ be the regular fibration maps given by Proposition
		\ref{prop:regular-fibration}.  The
		functions $u_i=h\circ F_i$ extend by zero to smooth functions on $X_i$ and
		satisfy
		\begin{equation}
			0\leq u_i\leq1,
			\qquad
			|\nabla^2u_i|_{g_i}\leq C_h.
			\label{eq:test-function-Hessian}
		\end{equation}
		Moreover, there is $\delta>0$ such that, for all sufficiently large $i$,
		\begin{equation}
			\mu_i(\{u_i=0\})\geq\delta,
			\qquad
			\mu_i(\{u_i=1\})\geq\delta.
			\label{eq:plateau-measures}
		\end{equation}
	\end{lemma}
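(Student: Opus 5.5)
The plan has three parts, taken in the order of the statement: first the extension by zero and the pointwise bounds \eqref{eq:test-function-Hessian}, then the plateau estimate \eqref{eq:plateau-measures}.

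\emph{Extension and bounds.} Since $h$ has compact support in $\operatorname{int}Q\subset\operatorname{int}Q'$, choose a compact interval $K$ with $\operatorname{supp}h\subset\operatorname{int}K$ and $K\Subset\operatorname{int}Q'$. On $Q_i'$, the function $u_i=h\circ F_i$ vanishes outside $F_i^{-1}(K)$. To extend it by zero, it is enough that $F_i^{-1}(K)$ stays a positive distance from $\partial Q_i'=F_i^{-1}(\partial Q')$. Then $u_i$ vanishes on a neighborhood of $\partial Q_i'$ inside $Q_i'$, and extending by zero outside $Q_i'$ gives a smooth function on $X_i$.

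The bound $0\le u_i\le 1$ is immediate from $0\le h\le1$. For the Hessian I use the chain rule:
\[
\nabla^2u_i=h''(F_i)\,dF_i\otimes dF_i+h'(F_i)\,\nabla^2F_i .
\]
Together with \eqref{eq:fibration-derivatives} on $Q'$, this gives $|\nabla^2u_i|\le \|h\|_{C^2}(C_{Q'}^2+C_{Q'})=:C_h$. Outside $Q_i'$ the Hessian is zero.

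\emph{Plateau measures.} Fix a compact interval $I_a\Subset U_a$ with nonempty interior. By the last clause of Proposition~\ref{prop:regular-fibration}, $\psi_i^{-1}(I_a)\subset Q_i'$ for large $i$. By \eqref{eq:fibration-GH-compatibility}, $F_i$ maps $\psi_i^{-1}(I_a)$ into a small neighborhood of $I_a$, which lies in $U_a$ for large $i$. Hence $u_i=a$ on $\psi_i^{-1}(I_a)$. It therefore suffices to show
\[
\liminf_i\mu_i(\psi_i^{-1}(I_a))>0.
\]
I would deduce this from measured Gromov--Hausdorff convergence. After passing to a subsequence, the normalized measures $\mu_i$ converge (Cheeger--Colding) to a renormalized limit measure $\nu$ on $Z$. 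By Bishop--Gromov relative volume comparison, valid since $\Ric=0$, this limit measure has full support. The key uniform input is that, with $\Ric\geq0$ and $\diam\le d_+$, every ball $B_r(x)$ with $r$ fixed satisfies $\mu_i(B_r(x))\ge c(r,d_+)>0$ uniformly in $i$ and $x$. This follows from $\mathrm{Vol}\,B_r(x)/\mathrm{Vol}\,X\ge (r/d_+)^4$.

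Now choose a point $z\in\operatorname{int}I_a$, a radius $r$ with $B_{2r}(z)\subset I_a$, and points $x_i\in X_i$ with $\psi_i(x_i)$ close to $z$. For large $i$, the ball $B_r(x_i)$ lies inside $\psi_i^{-1}(B_{2r}(z))\subset\psi_i^{-1}(I_a)$. Therefore
\[
\mu_i(\{u_i=a\})\ge (r/d_+)^4=:\delta .
\]
This argument does not actually need the subsequence or the limit measure.

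\emph{Expected obstacle.} The delicate step is the extension by zero, namely showing that $F_i^{-1}(K)$ is separated from $\partial Q_i'$. This should follow because $F_i$ is $C_{Q'}$-Lipschitz by \eqref{eq:fibration-derivatives}. A point $x\in F_i^{-1}(K)$ and a point $y\in\partial Q_i'$ therefore satisfy
\[
d(x,y)\ge C_{Q'}^{-1}\,d_Z\bigl(K,\partial Q'\bigr)>0 .
\]
Consequently $u_i$ vanishes on a collar of $\partial Q_i'$, and the extension by zero is smooth.
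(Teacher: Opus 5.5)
Your proposal is correct and follows essentially the paper's route: the chain rule with \eqref{eq:fibration-derivatives} gives \eqref{eq:test-function-Hessian}, and a ball around an approximate $\psi_i$-preimage of a point of $U_a$ is placed in $Q_i'$ by interior coverage and sent into $U_a$ by \eqref{eq:fibration-GH-compatibility}, with Bishop--Gromov then giving the uniform plateau bound. The differences are minor: your Lipschitz collar argument for the extension by zero is more than needed, since continuity of $F_i$ and $h=0$ near $\partial Q'$ already suffice, as the paper notes, and the measured Gromov--Hausdorff detour can be dropped, as you observe yourself.
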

	
	\begin{proof}
		Proposition~\ref{prop:regular-fibration}, applied to $Q'$, gives the
		derivative bounds, compatibility with $\psi_i$, and interior coverage used
		below.
		
		Since $h$ vanishes on a neighborhood of $Q'\setminus Q$ and
		$\partial Q_i'=F_i^{-1}(\partial Q')$, extension by zero is smooth.
		The chain rule and \eqref{eq:fibration-derivatives} give
		\eqref{eq:test-function-Hessian}.
		
		Choose $z_a\in U_a$, $a=0,1$, and $\rho>0$ such that
		$\overline{B_{4\rho}(z_a)}\subset U_a$ and $\rho\leq2d_+$.  By the
		approximate surjectivity of $\psi_i$, choose $x_{a,i}\in X_i$ with
		$d_Z(\psi_i(x_{a,i}),z_a)<\epsilon_i$.  For all large $i$, the distortion
		bound for $\psi_i$ sends $B_\rho(x_{a,i})$ into $B_{2\rho}(z_a)$.
		Applying the interior coverage assertion to
		$\overline{B_{2\rho}(z_a)}$ places this ball in $Q_i'$.  Since
		$\sup_{Q_i'}d_Z(F_i,\psi_i)<\rho$ for all large $i$, its $F_i$-image lies
		in $B_{3\rho}(z_a)\subset U_a$, and hence
		\[
		B_\rho(x_{a,i})\subset\{u_i=a\}.
		\]
		Bishop--Gromov comparison, $\Ric(g_i)=0$, and
		$\diam(X_i,g_i)\leq d_+$ now give
		\[
		\mu_i(\{u_i=a\})
		\geq\mu_i(B_\rho(x_{a,i}))
		\geq\left(\frac{\rho}{2d_+}\right)^4,
		\qquad a=0,1.
		\]
		This proves \eqref{eq:plateau-measures}.
	\end{proof}
	
	\begin{proposition}
		\label{prop:positive-limit}
		Let $0\ne\alpha\in\overline{\Kah_X}$ satisfy $\alpha^2=0$, and let
		$\beta_i\to\alpha$ be Kähler classes.  If
		\[
		\liminf_{i\to\infty}D(\beta_i)>0,
		\]
		then $\alpha$ is nonrigid.
	\end{proposition}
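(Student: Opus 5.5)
We split according to whether the ray of $\alpha$ is rational. If $\R\alpha\cap H^2(X,\Q)\ne\{0\}$, Lemma~\ref{lem:rational-ray-nonrigid} already gives nonrigidity, so we assume the ray is irrational. Pass to a subsequence with $D(\beta_i)\to D_\infty>0$. Diameters are uniformly bounded above \cite{Tosatti}, and volumes $q(\beta_i,\beta_i)/2\to0$. By Gromov compactness we may therefore assume $(X,g_i)\to(Z,d_Z)$ in the Gromov--Hausdorff sense with $\diam Z>0$. Theorem~\ref{thm:dimension-exclusion} then forces $Z$ to be a compact interval of positive length. The curvature concentration set $S$ is finite, so we can choose compact intervals $Q\Subset\operatorname{int}Q'\Subset\operatorname{int}Z\setminus S$ and a cutoff $h$ with plateaus $h\equiv0$ on $U_0$ and $h\equiv1$ on $U_1$. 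Lemma~\ref{lem:interval-test-functions} then yields smooth $u_i=h\circ F_i$ with $0\le u_i\le1$, $|\nabla^2u_i|_{g_i}\le C_h$, and plateau measures at least $\delta$.

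The plan is to use the $u_i$ to build positive representatives of $\alpha$ which are uniformly far apart in $d_\mu$. Write $\omega_i=\CY(\beta_i)$. The Hessian bound gives $|\ddc u_i|_{\omega_i}\le C$, so $\omega_i\pm\varepsilon\ddc u_i\ge0$ for a fixed small $\varepsilon>0$ independent of $i$. These are positive $(1,1)$-forms in the classes $\beta_i$ with uniformly bounded mass, since their mass is $\int\beta_i\wedge\kappa_0\to\int\alpha\wedge\kappa_0$ for a fixed Kähler $\kappa_0$. After passing to a subsequence, weak compactness of positive currents gives limits $T_\pm\in\mathcal P(\alpha)$. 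We also need $u_i$, as an $L^1(\mu)$ function, to converge. The measure $\mu$ is independent of $i$. The functions $u_i$ are bounded, and their $\ddc$ is bounded in mass by the previous estimate. By compactness of quasi-psh functions (each $\varepsilon u_i$ is $\omega_i$-psh, hence $C\kappa_0$-psh for large $i$), a subsequence converges in $L^1(\mu)$ to some $u$ with $0\le u\le1$. This gives
\[
T_+-T_-=\lim_i 2\varepsilon\,\ddc u_i=2\varepsilon\,\ddc u .
\]

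The key step, and the main obstacle, is to show that $u$ is not constant, so that $T_+\ne T_-$. The plateau estimates give $\mu(\{u_i=0\})\ge\delta$ and $\mu(\{u_i=1\})\ge\delta$, and these must persist in the $L^1(\mu)$ limit. Strong $L^1$ convergence suffices here. If $u\equiv c$, then
\[
\int|u_i-c|\,d\mu\ge\delta\bigl(|c|+|1-c|\bigr)\ge\delta,
\]
which contradicts $u_i\to u$ in $L^1(\mu)$. The delicate point is justifying the $L^1(\mu)$ compactness. This needs the uniform bound on $\ddc u_i$ relative to a fixed Kähler form $\kappa_0$, not just relative to $\omega_i$. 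To get it, compare $\omega_i\le C\kappa_0$ in cohomology and use positivity: $-C'\kappa_0\le\varepsilon\ddc u_i$ holds as currents because $\omega_i+\varepsilon\ddc u_i\ge0$ and $\omega_i$ is a smooth form. If $\omega_i$ is not uniformly dominated by $\kappa_0$ pointwise, we instead view $u_i$ as $\omega_i$-psh functions. In that case we apply Hartogs-type compactness to the family of currents $\omega_i+\varepsilon\ddc u_i$, extract a weak limit of potentials relative to a fixed smooth representative, and then use the plateau lower bound together with $0\le u_i\le1$ (dominated convergence along a.e. convergent subsequences) to show that the difference potential is nonconstant. Since $T_+\ne T_-$, the class $\alpha$ is nonrigid.
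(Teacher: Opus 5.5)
Your outline is the paper's proof. It has the same rational/irrational split, the same reduction to an interval limit via Theorem~\ref{thm:dimension-exclusion}, the same test functions from Lemma~\ref{lem:interval-test-functions}, the same perturbations $\omega_i\pm\varepsilon\ddc u_i$, and the same plateau argument showing that the $L^1$ limit $u$ is nonconstant.

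The one step that is wrong as written is your main justification of the $L^1$ compactness of $u_i$. Passing from ``$\varepsilon u_i$ is $\omega_i$-psh'' to ``$\varepsilon u_i$ is $C\kappa_0$-psh'' requires the pointwise bound $\omega_i\le C\kappa_0$ with $C$ independent of $i$. None of your inputs gives this:
\begin{itemize}
\item the cohomological bound $\int_X\omega_i\wedge\kappa_0\le C$ gives no pointwise control;
\item smoothness of each individual $\omega_i$ gives nothing uniform in $i$;
\item no such bound is available for collapsing Ricci-flat metrics.
\end{itemize}
So the inequality $-C'\kappa_0\le\varepsilon\ddc u_i$ is unjustified. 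The case distinction on whether $\omega_i$ is dominated by $\kappa_0$ should be dropped.

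Your fallback is the correct argument and is exactly what the paper does, so make it the proof and state it precisely. Choose smooth $\theta_i\in\beta_i$ with $\theta_i\to\theta_\alpha$ in $C^\infty$, and write $\omega_i=\theta_i+\ddc\varphi_i$ with $\sup_X\varphi_i=0$.
\begin{itemize}
\item Both $\varphi_i$ and $\varphi_i+\varepsilon u_i$ are $\theta_i$-psh, hence $C\kappa_0$-psh for a uniform $C$.
\item Since $0\le u_i\le1$, their suprema lie in $[0,\varepsilon]$.
\item Quasi-psh compactness therefore makes both sequences precompact in $L^1$, and hence so is their difference $\varepsilon u_i$.
\end{itemize}
The paper phrases this with $v_i=u_i+C_1\varphi_i$, which is the same device.

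Alternatively, the fact you note just before already suffices. From $-C_1\omega_i\le\ddc u_i\le C_1\omega_i$ you get that $\ddc u_i$ has uniformly bounded mass, and $0\le u_i\le1$. This bounds $\Delta_{\kappa_0}u_i$ in $L^1$, hence $u_i$ in $W^{1,p}$ for $p<4/3$. Rellich's theorem then gives $L^1$ compactness with no quasi-psh argument.
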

	
	\begin{proof}
		If the ray of $\alpha$ is rational, the conclusion follows from Lemma
		\ref{lem:rational-ray-nonrigid}.  We may therefore assume
		$\R\alpha\cap H^2(X,\Q)=\{0\}$.  Set $\omega_i=\CY(\beta_i)$ and
		$g_i=g_{\omega_i}$.
		After passing to a subsequence, the hypothesis and the uniform diameter
		theorem \cite[Theorem 4.1]{Tosatti} give constants $d_0,D_0>0$ such that
		$d_0\leq D(\beta_i)\leq D_0$.
		By Gromov compactness, after passing to a further subsequence, we obtain
		\[
		(X,g_i)\xrightarrow{\mathrm{GH}}(Z,d_Z).
		\]
		Fix Gromov--Hausdorff $\epsilon_i$-approximations $\psi_i:X\to Z$, and set
		$\mu_i=dV_{g_i}/\Vol(X,g_i)$.  Continuity of diameter gives
		$d_0\leq\diam Z\leq D_0$, while
		$\Vol(X,g_i)=\beta_i^2/2\to0$.
		Sun--Zhang's classification gives $\dim Z\in\{1,2,3\}$ and identifies a
		one-dimensional limit with a compact interval
		\cite[Theorem 1.1]{SunZhang}.  Theorem
		\ref{thm:dimension-exclusion} excludes dimensions two and three.  Hence $Z$
		is a compact interval.  Let $S\subset Z$ be the curvature concentration set
		associated with this subsequence and the maps $\psi_i$.
		
		Choose compact intervals
		$Q\Subset\operatorname{int}Q'\Subset\operatorname{int}Z\setminus S$ and a
		function
		\[
		h\in C_c^\infty(\operatorname{int}Q),
		\qquad
		0\leq h\leq1.
		\]
		Choose $h$ so that each of $\{h=0\}$ and $\{h=1\}$ contains a nonempty
		open interval compactly contained in $\operatorname{int}Q$.
		Let $u_i$ be the pullback functions supplied by Lemma
		\ref{lem:interval-test-functions}, extended by zero.
		
		\paragraph{Uniform complex Hessian control.}
		Equation \eqref{eq:test-function-Hessian} gives
		\[
		|\nabla^2_{g_i}u_i|_{g_i}\leq C_h.
		\]
		The complex structure $J$ is fixed and
		$\nabla^{g_i}J=0$.  Hence, after enlarging the constant, there is $C_1$
		independent of $i$ such that
		\begin{equation}
			-C_1\omega_i\leq\ddc u_i\leq C_1\omega_i.
			\label{eq:ddcu-bound}
		\end{equation}
		Choose $0<\varepsilon<C_1^{-1}$.  Then
		\begin{equation}
			\omega_i^\pm
			:=\omega_i\pm\varepsilon\ddc u_i
			\label{eq:perturbed-forms}
		\end{equation}
		are Kähler forms and $[\omega_i^\pm]=\beta_i$.
		
		\paragraph{$L^1$ compactness on the fixed complex surface.}
		Choose smooth closed representatives $\theta_i\in\beta_i$ such that
		\[
		\theta_i\longrightarrow\theta_\alpha
		\qquad\text{in }C^\infty(X),
		\]
		and write
		\[
		\omega_i=\theta_i+\ddc\varphi_i,
		\qquad
		\sup_X\varphi_i=0.
		\]
		For a fixed background Kähler form $\omega_0$, the functions
		$\varphi_i$ satisfy
		\[
		\ddc\varphi_i\geq-\theta_i\geq-C\omega_0.
		\]
		Compactness of normalized quasi-plurisubharmonic functions therefore
		gives, after passing to a subsequence,
		\begin{equation}
			\varphi_i\longrightarrow\varphi
			\qquad\text{in }L^1(X).
			\label{eq:phi-convergence}
		\end{equation}
		Set
		\[
		v_i=u_i+C_1\varphi_i.
		\]
		By \eqref{eq:ddcu-bound},
		\[
		\ddc v_i
		\geq-C_1\omega_i+C_1(\omega_i-\theta_i)
		=-C_1\theta_i
		\geq-C_2\omega_0.
		\]
		Since $0\leq u_i\leq1$ and $\sup_X\varphi_i=0$, one also has
		$0\leq\sup_Xv_i\leq1$.  A further application of quasi-psh compactness
		gives $v_i\to v$ in $L^1(X)$.  Consequently,
		\begin{equation}
			u_i\longrightarrow u:=v-C_1\varphi
			\qquad\text{in }L^1(X).
			\label{eq:u-convergence}
		\end{equation}
		
		\paragraph{The limiting function is nonconstant.}
		By Subsection \ref{subsec:fixed-measure}, the normalized volume measures
		of $g_i$ all equal the same smooth probability measure $\mu$.  This
		measure has a smooth bounded density with
		respect to the fixed background volume $\omega_0^2$, so
		\eqref{eq:u-convergence} also holds in $L^1(X,\mu)$.  By
		\eqref{eq:plateau-measures}, there is $\delta>0$ such that, for all large $i$,
		\[
		\mu(\{u_i=0\})\geq\delta,
		\qquad
		\mu(\{u_i=1\})\geq\delta.
		\]
		If $u$ were constant, say $u\equiv c$, then $0\leq c\leq1$ and
		\[
		\|u_i-c\|_{L^1(X,\mu)}
		\geq c\,\mu(\{u_i=0\})
		+(1-c)\,\mu(\{u_i=1\})
		\geq\delta,
		\]
		contrary to \eqref{eq:u-convergence}.  Thus
		$u$ is nonconstant.

		\paragraph{Two distinct limiting positive currents.}
		The masses of $\omega_i$ with respect to a fixed background Kähler form
		are uniformly bounded because $\beta_i\to\alpha$.  After passing to a
		subsequence,
		\[
		\omega_i\rightharpoonup T
		\]
		for a closed positive $(1,1)$-current $T$ with $[T]=\alpha$.  Equation
		\eqref{eq:u-convergence} gives
		$\ddc u_i\rightharpoonup\ddc u$ in the sense of distributions.
		Passing to the limit in \eqref{eq:perturbed-forms} yields
		\[
		T^\pm:=T\pm\varepsilon\ddc u\geq0,
		\qquad
		[T^\pm]=\alpha.
		\]
		If $T^+=T^-$, then $\ddc u=0$. By compactness of $X$, $u$ is constant, contradicting the fact that $u$ is nonconstant. Therefore
		$T^+\neq T^-$, and $\alpha$ is nonrigid.
	\end{proof}
	
	\subsection{Proof and consequences of the metric characterization}
	
	\begin{proof}[Proof of Theorem~\ref{thm:equivalence}]
		The implication \textup{(ii)}$\Rightarrow$\textup{(iii)} is immediate.
		Applying Theorem~\ref{thm:point-collapse-general-CY} to
		$\kappa_i=i^{-1}\kappa$ gives
		\textup{(iii)}$\Rightarrow$\textup{(i)}.
		
		It remains to prove
		\textup{(i)}$\Rightarrow$\textup{(ii)}.  Suppose that $\alpha$ is rigid
		but \textup{(ii)} fails.  Then there is a sequence $\beta_i\to\alpha$ for
		which $D(\beta_i)$ does not tend to zero.  After passing to a subsequence,
		there is a constant $d_0>0$ such that
		\[
		D(\beta_i)\geq d_0.
		\]
		Proposition~\ref{prop:positive-limit} implies that $\alpha$ is nonrigid,
		a contradiction.
	\end{proof}
	
	\begin{proof}[Proof of Corollary~\ref{cor:dichotomy}]
		The rigid case follows from Theorem~\ref{thm:equivalence}.  Suppose that
		$\alpha$ is nonrigid and set $g_t=g_{\CY(\alpha+t\kappa)}$.  Corollary
		\ref{cor:radial-green} and the general diameter theorem give
		\[
		0<c\leq\diam(X,g_t)\leq C
		\]
		for all small $t$.  Given any sequence $t_i\to0^+$, Gromov compactness
		gives, after passing to a subsequence,
		\[
		(X,g_{t_i})\xrightarrow{\mathrm{GH}}(Z,d_Z).
		\]
		Since
		$\Vol(X,g_{t_i})=t_i\alpha\cdot\kappa+t_i^2\kappa^2/2\to0$,
		Sun--Zhang's classification \cite[Theorem 1.1]{SunZhang} gives
		$\dim Z\in\{1,2,3\}$.  Theorem~\ref{thm:dimension-exclusion} excludes
		dimensions two and three.  Hence $Z$ is a compact interval, and the lower
		diameter bound makes it nondegenerate.
	\end{proof}
	
	\begin{remark}
		\label{rem:tangential}
		Theorem~\ref{thm:equivalence} should not be confused with the false
		statement
		\[
		\bigl(\exists\,\beta_i\to\alpha:D(\beta_i)\to0\bigr)
		\quad\Longrightarrow\quad
		\alpha\text{ is rigid}.
		\]
		Filip--Tosatti constructed a nonrigid rational elliptic class $\alpha$ and
		two sequences $\alpha_i,\beta_i\to\alpha$ satisfying
		\[
		\diam\bigl(X,\CY(\alpha_i)\bigr)\geq c>0,
		\qquad
		\diam\bigl(X,\CY(\beta_i)\bigr)\to0.
		\]
		This does not contradict Theorem
		\ref{thm:point-collapse-general-CY}: that theorem requires
		$\beta_i-\alpha=\kappa_i\in\Kah_X$.  The first sequence can be taken
		radial, $\alpha_i=\alpha+i^{-1}\omega$, whereas the second is produced by a
		parabolic automorphism and does not satisfy this condition,
		see \cite{FilipTosatti} and the discussion in
		\cite[Theorem 3.1 and (25)]{Tosatti}.
	\end{remark}
	
	\section{Examples with nonrigid classes and diameter bounded below}
	\label{sec:kummer}
	
	\subsection{Kummer construction}
	Fix $\theta\in\R\setminus\Q$.  Identifying $\C^2$ with $\R^4$ by the real
	coordinates $(x_1,x_2,y_1,y_2)$, let $A=\C^2/\Lambda$, where
	\[
	\Lambda=B\Z^4,
	\qquad
	B=
	\begin{pmatrix}
		1 & \theta & 0 & 0\\
		0 & 1 & 0 & 0\\
		0 & 0 & 1 & 0\\
		0 & 0 & 0 & 1
	\end{pmatrix}.
	\]
	The columns of $B$ correspond to
	\[
	\lambda_1=(1,0),\qquad
	\lambda_2=(\theta,1),\qquad
	\lambda_3=(i,0),\qquad
	\lambda_4=(0,i).
	\]
	The periods of $dx_1$ and $dy_1$ on these generators are
	$(1,\theta,0,0)$ and $(0,0,1,0)$, respectively.  Thus $y_1\bmod\Z$
	defines a map $A\to\R/\Z$.  Set
	\[
	\alpha_A=dx_1\wedge dy_1=\frac{i}{2}dz_1\wedge d\overline z_1.
	\]
	Its periods on the cycles $\lambda_i\wedge\lambda_j$, ordered as
	$(12,13,14,23,24,34)$, are
	\[
	(0,1,0,\theta,0,0).
	\]
	Thus $\R[\alpha_A]\cap H^2(A,\Q)=\{0\}$.
	
	Let $Y=A/\{\pm1\}$ and let $r:X\to Y$ be the minimal resolution.  Choose
	a smooth even function $h:\R/\Z\to[0,\infty)$ such that
	\[
	\int_0^1h(y)\,dy=1,
	\qquad h=0\text{ near }\tfrac12\Z.
	\]
	The form
	\[
	h(y_1)\alpha_A
	\]
	is closed, semipositive, invariant under $z\mapsto-z$, and zero near all
	fixed points.  It therefore descends and extends by zero to a smooth
	semipositive form $T_h$ on $X$.
	
	\begin{proof}[Proof of Theorem~\ref{thm:kummer}\textup{(a)}]
		If $h_1,h_2$ are two such functions, set $k=h_1-h_2$ and
		\[
		H(y)=\int_0^y k(s)\,ds.
		\]
		Evenness and equal averages imply that $H$ is periodic, odd, and zero near
		$\tfrac12\Z$.  The form $-H(y_1)dx_1$ descends smoothly to $X$, and
		\[
		T_{h_1}-T_{h_2}=d\bigl(-H(y_1)dx_1\bigr).
		\]
		Hence all $T_h$ represent one class $\alpha\in H^{1,1}(X,\R)$.  Since $T_h$
		is semipositive and has positive pairing with every Kähler class, $\alpha$ is
		nonzero and nef. Moreover, $T_h^2=0$, so $\alpha^2=0$.  If a nonzero multiple
		of $\alpha$ were rational, its pullback to the twofold torus cover away from
		the exceptional curves would make the same multiple of $[\alpha_A]$ rational.
		The displayed periods rule this out.  Thus the ray of $\alpha$ is irrational.
		Choosing two distinct admissible functions $h_1$ and $h_2$ gives
		$T_{h_1}\neq T_{h_2}$.  Hence $\alpha$ is nonrigid.
	\end{proof}
	
	\begin{proof}[Proof of Theorem~\ref{thm:kummer}\textup{(b)}]
		The diameter bound in Theorem~\ref{thm:kummer}(b) follows directly from
		Corollary~\ref{cor:radial-green}, since $\alpha$ is nonrigid.  We give a
		second proof that yields an explicit constant.  Choose a nonconstant smooth
		even periodic function $f$ with $f'=0$ near $\tfrac12\Z$.  Then
		$F=f(y_1)$ descends smoothly to $X$.  On the torus cover,
		\[
		\beta:=i\partial F\wedge\overline\partial F
		=\frac12 f'(y_1)^2\alpha_A.
		\]
		The preceding exactness calculation therefore gives
		\[
		[\beta]=c_f\alpha,
		\qquad
		c_f=\frac12\int_0^1f'(y)^2\,dy>0.
		\]
		Let $\kappa\in\Kah_X$ be arbitrary and put
		\[
		\omega_\kappa=\CY(\alpha+\kappa),\quad
		A_\kappa=\alpha\cdot\kappa,\quad B_\kappa=\kappa^2.
		\]
		Since $\alpha$ is nef, $\alpha+\kappa$ is Kähler.  Since $\alpha$ is
		nonzero, the Hodge index theorem gives
		$A_\kappa>0$. Moreover, $B_\kappa>0$ because $\kappa$ is Kähler.
		Then
		\[
		V_\kappa:=\Vol(X,\omega_\kappa)
		=A_\kappa+\frac12B_\kappa
		\]
		and the Dirichlet energy is cohomological:
		\[
		\int_X|dF|_{\omega_\kappa}^2\,dV_{\omega_\kappa}
		=2\int_X\beta\wedge\omega_\kappa=2c_f A_\kappa.
		\]
		The normalized Ricci-flat volume measure is independent of $\kappa$.  Its
		lift to the torus, normalized to have total mass one, is Haar measure on $A$.
		The pushforward of Haar measure by the nonconstant homomorphism
		$y_1\colon A\to\R/\Z$ is normalized Lebesgue measure.  Put
		\[
		\overline f=\int_0^1f(s)\,ds,
		\qquad
		F_0=F-\overline f,
		\qquad
		\sigma_f
		:=\sqrt{\int_0^1\bigl(f(y)-\overline f\bigr)^2\,dy}>0.
		\]
		The positivity of $\sigma_f$ follows from the choice of the nonconstant
		function $f$.  The preceding description of the measure gives
		\[
		\int_XF_0\,dV_{\omega_\kappa}=0,
		\qquad
		\int_XF_0^2\,dV_{\omega_\kappa}=V_\kappa\sigma_f^2>0,
		\qquad
		dF_0=dF.
		\]
		Let
		$\Delta_\kappa=d_{\omega_\kappa}^{*}d$ be the nonnegative Laplace--Beltrami
		operator on functions, and let $\lambda_1(\omega_\kappa)$ be its first positive
		eigenvalue.  For a nonzero smooth real function $v$, its Rayleigh
		quotient is
		\[
		\mathcal R_\kappa(v)
		:=\frac{\int_X|dv|_{\omega_\kappa}^2\,dV_{\omega_\kappa}}
		{\int_Xv^2\,dV_{\omega_\kappa}}.
		\]
		Because $X$ is connected, the zero eigenspace consists of the constant
		functions.  The variational characterization of the first positive
		eigenvalue is therefore
		\[
		\lambda_1(\omega_\kappa)
		=\inf\left\{\mathcal R_\kappa(v):
		0\ne v\in C^\infty(X,\R),\ 
		\int_Xv\,dV_{\omega_\kappa}=0\right\}.
		\]
		The function $F_0$ is an admissible test function.  Using $dF_0=dF$ and the
		Dirichlet energy identity above, we obtain
		\[
		\lambda_1(\omega_\kappa)
		\le\mathcal R_\kappa(F_0)
		=\frac{\int_X|dF|_{\omega_\kappa}^2\,dV_{\omega_\kappa}}
		{\int_XF_0^2\,dV_{\omega_\kappa}}
		=\frac{2c_f A_\kappa}{V_\kappa\sigma_f^2}.
		\]
		On the other hand, $(X,\omega_\kappa)$ is closed and
		$\Ric(\omega_\kappa)=0$.  The Li--Yau estimate \cite{Lipeter} thus applies
		and gives
		\[
		\lambda_1(\omega_\kappa)\ge
		\frac{c(n)}{\diam(X,\omega_\kappa)^2}.
		\]
		Combining the two bounds and substituting the formula for $V_\kappa$ yields
		\[
		\diam(X,\omega_\kappa)^2
		\ge\frac{c(n)V_\kappa\sigma_f^2}{2c_f A_\kappa}
		=\frac{c(n)\sigma_f^2}{2c_f}
		\left(1+\frac{B_\kappa}{2A_\kappa}\right)
		\geq\frac{c(n)\sigma_f^2}{2c_f}.
		\]
		This gives another proof of Theorem~\ref{thm:kummer}(b).
	\end{proof}
	
	\begin{proof}[Proof of Theorem~\ref{thm:kummer}\textup{(c)}]
		To prove part (c), let $\kappa_i\in\Kah_X$ satisfy $\kappa_i\to0$, and put
		\[
		\omega_i=\CY(\alpha+\kappa_i),
		\qquad
		D_i=\diam(X,\omega_i).
		\]
		Part (b) gives $D_i\geq c_0$, while the uniform diameter theorem
		\cite[Theorem 4.1]{Tosatti} gives $D_i\leq C$.  Moreover,
		\[
		\Vol(X,D_i^{-2}\omega_i)
		=D_i^{-4}\left(\alpha\cdot\kappa_i+\frac12\kappa_i^2\right)
		\longrightarrow0.
		\]
		By Gromov compactness, the unit diameter metrics $D_i^{-2}\omega_i$ are
		precompact.  Sun--Zhang's classification \cite[Theorem 1.1]{SunZhang} leaves
		limit dimensions one, two, and three, while Theorem
		\ref{thm:dimension-exclusion} excludes dimensions two and three because
		$\alpha+\kappa_i\to\alpha$ and $\inf_iD_i>0$. Thus every cluster point is
		isometric to $[0,1]$.
	\end{proof}

	\begin{remark}
		Along any subsequence for which $D_i\to D_\infty$, the unnormalized limit
		is the interval of length $D_\infty$. Without diameter convergence, Theorem
		\ref{thm:kummer}(c) makes no claim about an unnormalized limit.
	\end{remark}

	\section{New examples of rigid currents}
	In this and next sections, we construct new examples of rigid irrational isotropic nef classes on K3 surfaces.
	
	\subsection{Mechanism of Filip--Tosatti and Sibony--Soldatenkov--Verbitsky}
	First, we review the result of Filip--Tosatti~\cite[Theorem 4.3.1]{FilipTosatti} that motivates the problem.
	Let $X$ be a K3 surface containing no $(-2)$-curves. Under this assumption, one can show that the ample cone $\Amp(X)$ is a
	component of the full positive cone, compare Theorem~\ref{thm:weyl}. Moreover, $\Aut(X)$ acts on its projectivization as a lattice. A homogeneous dynamics argument then proves
	that there is a canonical rigid current in every irrational isotropic boundary class
	\cite[Theorem 4.3.1]{FilipTosatti}, and this canonical rigid current has a continuous potential
	\cite[Theorem 4.2.2]{FilipTosatti}. There are two main ingredients in the proof that every irrational isotropic nef class is rigid~\cite{FilipTosatti}, which we recall below.

	\subsubsection{Semicontinuity of diameter}
	Fix a nonzero holomorphic volume form $\Omega$ on a Calabi--Yau manifold $X$ and define the canonical volume form
	$$
	\mu=\frac{\Omega\wedge\overline\Omega}
	{\int_X\Omega\wedge\overline\Omega}.
	$$
	One important property of this probability measure is that it is invariant under the automorphism group $\Aut(X)$.
	
	The following notion of the diameter of a pseudoeffective class on a K\"ahler manifold was introduced by Sibony--Soldatenkov--Verbitsky~\cite{SSV} in their study of the rigidity of certain nef classes on hyperk\"ahler manifolds. For a pseudoeffective class $\alpha$, let $\cP(\alpha)$ be the weakly
	compact set of closed positive currents in $\alpha$.  After fixing a smooth
	representative $\theta_\alpha$, this is the usual $L^1$-compactness of
	normalized $\theta_\alpha$-psh potentials.
	For $T,T'\in\cP(\alpha)$, write
	$$
	T-T'=dd^c u,\qquad \int_Xu\,d\mu=0,
	$$
	and define
	$$
	d(\alpha):=\sup_{T,T'\in\cP(\alpha)}\int_X|u|\,d\mu.
	$$
	\begin{lemma}\label{thm:ssvdiam} \cite{SSV}
		Let $\alpha$ be a pseudoeffective class. Then the following statements hold:
		\begin{enumerate}
			\item $d(\alpha)=0$ if and only if $\alpha$ is rigid;
			\item
			$d(t\alpha)=t\,d(\alpha)$ for any $t>0$;
			\item 
			$d(f^*\alpha)=d(\alpha)$ for $f\in\Aut(X)$;
			\item 
			$d(\alpha)\ge\limsup_jd(\alpha_j)$ for $\alpha_j\to\alpha$ in $H^{1,1}(X,\R)$.
			
		\end{enumerate}
	\end{lemma}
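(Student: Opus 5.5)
We prove the four properties of $d(\alpha)$ in turn. Throughout we use that $\mu$ is a fixed smooth probability measure. We also use that $d(\alpha)<\infty$ for pseudoeffective $\alpha$. This finiteness holds because, after fixing $\theta_\alpha$, every $T\in\cP(\alpha)$ is $\theta_\alpha+\ddc\varphi_T$ with $\varphi_T$ a $\theta_\alpha$-psh function. The family of such potentials normalized by $\int\varphi_T\,d\mu=0$ is relatively compact in $L^1(\mu)$, hence bounded. The difference potential of $T,T'$ is $u=\varphi_T-\varphi_{T'}$, which is automatically mean zero.

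\emph{(1).} If $\alpha$ is rigid, $\cP(\alpha)$ is a single point, so $u=0$ and $d(\alpha)=0$. Conversely, if $d(\alpha)=0$, then every difference potential vanishes $\mu$-a.e. Since $\mu$ has smooth positive density, $u=0$ as a distribution, so $T=T'$.

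\emph{(2).} The map $T\mapsto tT$ is a bijection $\cP(\alpha)\to\cP(t\alpha)$ sending $u$ to $tu$. The normalization $\int u\,d\mu=0$ is preserved, so $d(t\alpha)=t\,d(\alpha)$.

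\emph{(3).} Let $f\in\Aut(X)$. Pullback $f^*$ is a bijection $\cP(\alpha)\to\cP(f^*\alpha)$. If $T-T'=\ddc u$, then $f^*T-f^*T'=\ddc(u\circ f)$. We have $f^*\Omega=c\,\Omega$ with $|c|=1$ after integrating $\Omega\wedge\overline\Omega$, so $f_*\mu=\mu$. Hence $\int u\circ f\,d\mu=\int u\,d\mu=0$ and $\int|u\circ f|\,d\mu=\int|u|\,d\mu$, which gives $d(f^*\alpha)=d(\alpha)$.

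\emph{(4).} This is the main step. By passing to a subsequence we may assume $d(\alpha_j)\to\limsup_j d(\alpha_j)$, and we may take the $\alpha_j$ pseudoeffective, since otherwise there is nothing to prove. Fix smooth representatives $\theta_j\to\theta_\alpha$ in $C^\infty$. Choose $T_j,T'_j\in\cP(\alpha_j)$ with difference potential $u_j$ such that $\int|u_j|\,d\mu\ge d(\alpha_j)-1/j$. Write $T_j=\theta_j+\ddc\varphi_j$ and $T'_j=\theta_j+\ddc\varphi'_j$ with $\int\varphi_j\,d\mu=\int\varphi'_j\,d\mu=0$, so that $u_j=\varphi_j-\varphi'_j$.

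Since $\theta_j\le C\omega_0$ uniformly, the $\varphi_j$ and $\varphi'_j$ are $C\omega_0$-psh with zero $\mu$-mean, and hence have uniformly bounded $\sup$. By compactness of quasi-psh functions, after a further subsequence $\varphi_j\to\varphi$ and $\varphi'_j\to\varphi'$ in $L^1$. The limits are $\theta_\alpha$-psh because $\theta_j+\ddc\varphi_j\ge0$ passes to the weak limit. The $L^1$ convergence preserves the zero $\mu$-mean. We therefore obtain $T=\theta_\alpha+\ddc\varphi$ and $T'=\theta_\alpha+\ddc\varphi'$ in $\cP(\alpha)$ whose difference potential is $u=\varphi-\varphi'$. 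Since $u_j\to u$ in $L^1(\mu)$,
\[
d(\alpha)\ge\int_X|u|\,d\mu=\lim_j\int_X|u_j|\,d\mu\ge\limsup_j d(\alpha_j).
\]

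The only delicate point is the uniform upper bound on $\sup\varphi_j$ under the mean normalization for the varying classes. This follows from the standard estimate $\sup\varphi\le\int\varphi\,d\mu+C$ for $C\omega_0$-psh functions, with $C$ depending only on $C\omega_0$; the uniform bound on $\theta_j$ makes this $C$ independent of $j$.
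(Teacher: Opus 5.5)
Your proof is correct and follows the standard argument. The paper gives no proof of its own: it cites Sibony--Soldatenkov--Verbitsky and remarks only that $\cP(\alpha)$ is compact by $L^1$-compactness of normalized $\theta_\alpha$-psh potentials. That compactness is exactly the mechanism you use, both for the finiteness of $d(\alpha)$ and for the upper semicontinuity in (4), while (1)--(3) are the expected elementary bijection arguments, with the $\Aut(X)$-invariance of $\mu$ coming from $f^*\Omega=c\,\Omega$, $|c|=1$.
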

	\subsubsection{Orbit density in the ample cone}
	Let $X$ be a K3 surface. The standing assumption of Filip--Tosatti is that $X$ contains no $(-2)$-curves. Let $N:=\NS(X)_\R$ be its N\'eron--Severi space, and let $\textbf{G}:=SO(N)$ be the orthogonal group preserving the intersection pairing on $N$. We denote by $G:=\textbf{G}^0(\mathbb R)$ the connected component of the identity, and by
	$$
	\Gamma:=\Aut(X)\cap G
	$$
	the corresponding subgroup. Under the standing assumption that there are no $(-2)$-curves, $\Gamma$ is a lattice in $G$. Rigidity theorems in homogeneous dynamics then imply the following statement.
	
	\begin{lemma}\label{thm:density} \cite[Theorem 7.4.2]{FilipTosatti}
		Let $\alpha\in\partial\Amp(X)$ be a point on the boundary of the ample cone. Then either its orbit $\Aut(X)\cdot\alpha$ is dense in the ample cone, or $\alpha$ is proportional to an integral vector $[E]$ and its orbit is discrete.
	\end{lemma}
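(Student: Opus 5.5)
The plan is to translate the statement into homogeneous dynamics on $\Gamma\backslash G$, using the standard duality between $\Gamma$-orbits on $G/H$ and $H$-orbits on $\Gamma\backslash G$. I read ``dense in the ample cone'' as dense in the boundary $\partial\Amp(X)\setminus\{0\}$, since the orbit of a boundary point stays in the boundary.

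\textbf{Step 1: reduce to a homogeneous space.} Since $X$ has no $(-2)$-curves, Theorem~\ref{thm:weyl} gives $\Amp(X)=\Pos(X)$. Hence $\partial\Amp(X)\setminus\{0\}$ is one sheet $\mathcal L$ of the null cone of $N=\NS(X)_\R$, which has signature $(1,n)$. The group $G=SO^+(1,n)$ acts transitively on $\mathcal L$. The stabilizer of an isotropic vector $\alpha$ is $H_\alpha=M U$, where $U\cong\R^{n-1}$ is the unipotent radical of the parabolic $P_\alpha$ stabilizing the ray $\R_{>0}\alpha$, and $M\cong SO(n-1)$ is compact. Thus $\mathcal L\cong G/MU$ and $\Gamma\alpha\leftrightarrow\Gamma gMU$. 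By the standard duality, $\Gamma\cdot\alpha$ is dense (respectively closed) in $G/MU$ if and only if $\Gamma g\,MU$ is dense (respectively closed) in $\Gamma\backslash G$. I take as given that $\Gamma$ is a lattice: by Torelli (Theorem~\ref{thm:torelli}) and the absence of roots, $\Aut(X)$ has finite index in $O^+(\NS(X))$ modulo the finite kernel, and $O^+(\NS(X))$ is arithmetic. So $\Gamma$ is an arithmetic lattice for the $\Q$-form given by $\NS(X)$.

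\textbf{Step 2: apply the horospherical orbit classification.} The group $U$ is horospherical, since it is the expanding horospherical subgroup of the geodesic flow $a_t\in P_\alpha$. By Dani's theorem on horospherical flows (a special case of Ratner), every $U$-orbit closure in $\Gamma\backslash G$ is homogeneous. More precisely, for $\Gamma g$ either $\overline{\Gamma gU}=\Gamma\backslash G$, or $gP_\alpha g^{-1}$ is a $\Gamma$-rational parabolic and $\Gamma gU$ is a compact (closed) orbit around the corresponding cusp. Enlarging $U$ to $MU$ only adds a compact factor. So $\Gamma gMU$ is likewise either dense or closed, and in the latter case its closed orbit projects to a discrete $\Gamma$-orbit in $\mathcal L$.

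\textbf{Step 3: identify the closed case with rationality.} For the arithmetic group $\Gamma\subset O(\NS(X))$, $\Gamma$-rational parabolics of $G$ are exactly the stabilizers of isotropic rays spanned by vectors of $\NS(X)_\Q$. This is Borel--Harish-Chandra for $SO(1,n)$: cusps correspond to $\Q$-isotropic lines. Hence the non-dense case happens exactly when $\R\alpha\cap\NS(X)=\Z[E]$ for a primitive integral isotropic $[E]$. Conversely, if $\alpha=c[E]$ then $\Gamma\alpha\subset c\cdot\NS(X)$ is discrete, hence not dense. This gives the dichotomy.

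The main obstacle is Step 2: checking that Dani's classification applies to the stabilizer of the vector $\alpha$ rather than of the ray, and getting the closed/compact alternative in finite covolume. The point is that the geodesic part $A\subset P_\alpha$ is not in $H_\alpha$, so one must use the $U$-version, not the $P$-version. I would first try to quote Dani's ``horospherical orbits are dense or closed'' directly. Failing that, I would use Ratner's orbit closure theorem for the unipotent group $U$ and classify intermediate closed subgroups $U\subset L\subset G$. In $SO(1,n)$ these are contained in $P_\alpha$ or equal to $G$, which forces the same dichotomy.
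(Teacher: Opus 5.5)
Your argument is correct for $\rho(X)\ge3$ and follows essentially the homogeneous-dynamics route the paper has in mind when it cites Filip--Tosatti: the duality between $\Gamma$-orbits on the null cone $G/MU$ and $MU$-orbits on $\Gamma\backslash G$ is the one used in the proof of Lemma~\ref{lem:ratner}, and you pass from uniform to arbitrary arithmetic lattices via Dani's dense-or-closed dichotomy for horospherical orbits and the identification of cusps with rational isotropic lines. You should state $n\ge2$ (i.e.\ $\rho(X)\ge3$) explicitly, since the statement as quoted omits it and is false without it: for $n=1$, $SO^+(1,1)$ is not transitive on the two null rays and $U$ is trivial, and an irrational boundary class then has a discrete orbit $\{\lambda^k\alpha\}$ without being proportional to an integral class.
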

	\subsubsection{Sketch of proof}
	With the preceding lemma in hand, we can prove the rigidity of $\alpha$ as follows. We first choose a hyperbolic element
	$g\in\Gamma$ and let $\alpha_+$ be its expanding eigenvector, i.e.,
	$$
	g^*\alpha_+=\lambda\alpha_+,\qquad \lambda>1.
	$$
	Lemma~\ref{thm:ssvdiam} yields that
	$$
	d(\alpha_+)=d(g^*\alpha_+)=d(\lambda\alpha_+)
	=\lambda d(\alpha_+).
	$$
	Thus $d(\alpha_+)=0$.
	Then, by Lemma~\ref{thm:density}, there is a sequence $\gamma_j\in\Aut(X)$ such that
	$$\gamma_j^*\alpha\to\alpha_+.$$
	Again by Lemma~\ref{thm:ssvdiam}, it follows that
	$$
	0=d(\alpha_+)
	\ge\limsup_{j\to\infty}d(\gamma_j^*\alpha)
	=d(\alpha)\ge0.
	$$
	Thus $d(\alpha)=0$ and $\alpha$ is rigid.
	\subsection{New rigid irrational isotropic nef classes: outline}
	In this subsection, inspired by the work of Sibony--Soldatenkov--Verbitsky~\cite{SSV} and Filip--Tosatti~\cite{FilipTosattiSmooth}, we construct new examples of rigid isotropic classes on K3 surfaces, especially in the presence of $(-2)$-curves. Such irrational (but not strongly irrational) rigid isotropic classes appear not to have been known previously. The starting point of our construction is the Torelli theorem for K3 surfaces. Roughly speaking, to construct a K3 surface it suffices to fix a marking in the period domain. To produce an automorphism of a fixed K3 surface, it suffices to produce a Hodge isometry of $\Kthree$ preserving the ample cone. Thus, in addition to the two main ingredients mentioned above from~\cite{FilipTosatti}, our main input for constructing new examples of rigid nef classes is lattice-theoretic and computational.
	
	To this end, we first fix a primitive embedding of a lattice $S\hookrightarrow\Kthree$ and an orthogonal decomposition, with respect to the standard bilinear form of $\Kthree$,
	$$
	S_\R=N_\R\oplus L_\R,
	$$
	where $N$ is a negative-definite ADE lattice and $L$ is a Lorentz lattice. Recall that the signature of $\Kthree$ is $(3,19)$ and that our $S$ will be chosen orthogonal to the period.
	The negative-definite lattice $N$ is generated by the classes of the
	prescribed ADE curves. We choose a K3 surface satisfying $\NS(X)_\R=S_\R$. The main idea is to find a subgroup
	$\Gamma<\Aut(X)$ that fixes $N$ pointwise and acts nontrivially on
	$L=N^\perp$. At the same time, we require the image of $\Gamma$ in $SO(1,\nu-1)$ to be uniform, where $\nu$ is the rank of $L$. Godement's criterion will be useful for obtaining such a group.
	
	Recall that
	a subgroup $\Gamma<G$ of a Lie group is a \emph{lattice} if it is
	discrete and $\Gamma\backslash G$ has finite invariant volume and it is
	\emph{uniform} if this quotient is compact.  An \emph{arithmetic subgroup}
	of $SO(L_\Q)$ is a subgroup commensurable with the integral points for
	some lattice in $L_\Q$. We will need the following compactness criterion for the quotient by $\Gamma$.
	\begin{lemma}\label{lem:godement}\cite{Morris}
		Let $(L_{\mathbb Q},q)$ be a Lorentz lattice defined over $\mathbb Q$. Then the following statements are equivalent:
		\begin{enumerate}
			\item $\Gamma\backslash SO(L_\mathbb R,q)$ is compact;
			\item \(\Gamma\) contains no nontrivial unipotent elements;
			\item 
			\(\operatorname{rank}_{\mathbb Q}SO(L_\mathbb R,q)=0\);
			\item $L_{\mathbb Q}$ has no nonzero rational isotropic vector, i.e.,
			$$
			x\in L_\Q,\quad x^2=0\quad\Longrightarrow\quad x=0.
			$$
		\end{enumerate}
	\end{lemma}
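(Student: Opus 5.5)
Throughout, $\Gamma$ is understood to be an arithmetic subgroup of $SO(L_\Q,q)$, as in the surrounding discussion; the lemma is only meaningful in that setting. I would also assume $\dim L_\Q\ge3$. This holds in our applications, since $L_\R$ has signature $(1,n)$ with $n\ge2$. It guarantees that $\mathbf G=SO(L_\Q,q)$ is a connected semisimple $\Q$-group, so the standard reduction theory of Borel--Harish-Chandra applies. The plan is to split the four conditions into two groups. The equivalences \textup{(1)}$\Leftrightarrow$\textup{(2)}$\Leftrightarrow$\textup{(3)} are the general Godement compactness criterion for arithmetic quotients of semisimple groups. The equivalence \textup{(3)}$\Leftrightarrow$\textup{(4)} is a computation of the $\Q$-rank of an orthogonal group in terms of the Witt index of the rational quadratic form.

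For \textup{(3)}$\Leftrightarrow$\textup{(4)}, I would use Witt decomposition over $\Q$. Write
\[
L_\Q=H_1\oplus\cdots\oplus H_r\oplus L_0,
\]
where the $H_j$ are rational hyperbolic planes and $L_0$ is anisotropic. Each $H_j$, with isotropic basis $e_j,f_j$, carries the one-parameter $\Q$-split torus $e_j\mapsto te_j$, $f_j\mapsto t^{-1}f_j$. These tori commute, so they give a $\Q$-split torus of dimension $r$ in $\mathbf G$. Conversely, let $S$ be a $\Q$-split torus in $\mathbf G$. A nontrivial character of $S$ defines a weight space in $L_\Q$, and any vector $v$ in it satisfies $q(v,v)=\chi^2(s)\,q(v,v)$ for all $s\in S$, hence $q(v,v)=0$. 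So if $L_\Q$ is anisotropic, every $\Q$-split torus acts trivially on $L_\Q$ and is therefore trivial. Together these give $\operatorname{rank}_\Q\mathbf G=r$, the Witt index of $L_\Q$, and in particular $\operatorname{rank}_\Q\mathbf G=0$ if and only if $L_\Q$ has no nonzero rational isotropic vector.

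For the Godement part, I would argue as follows.
\begin{itemize}
\item \textup{(1)}$\Rightarrow$\textup{(2)}: let $u\in\Gamma$ be a nontrivial unipotent element. Then $u$ lies in a one-parameter unipotent subgroup of $G$, and $u$ can be conjugated toward the identity by elements $g_k$ with $g_k^{-1}ug_k\to1$. This means the injectivity radius of $\Gamma\backslash G$ at the points $\Gamma g_k$ tends to zero. That is impossible if the quotient is compact, because on a compact quotient the injectivity radius of the discrete subgroup $\Gamma$ is bounded below.
\item \textup{(2)}$\Rightarrow$\textup{(3)}: if $\operatorname{rank}_\Q\mathbf G>0$, there is a proper parabolic $\Q$-subgroup $\mathbf P$. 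Its unipotent radical $\mathbf U$ is a nontrivial unipotent $\Q$-group, and $\Gamma\cap\mathbf U(\Q)$ is a lattice in $\mathbf U(\R)$, in particular nontrivial. In our orthogonal setting this is explicit: a rational isotropic vector $e$ gives Eichler--Siegel transvections $x\mapsto x+q(x,e)w-q(x,w)e-\tfrac12q(w,w)q(x,e)e$ for rational $w\perp e$. After clearing denominators, these are integral unipotent elements, and a suitable power lies in $\Gamma$.
\item \textup{(3)}$\Rightarrow$\textup{(1)}: this is the substantive direction, the Borel--Harish-Chandra / Mostow--Tamagawa theorem. I would prove it via Mahler's compactness criterion. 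Embed $\Gamma\backslash G$ into the space of lattices $\{gL_\Z\}$. It is enough to show that the lattices $gL_\Z$, $g\in G$, contain no arbitrarily short nonzero vectors for a fixed Euclidean norm.
\end{itemize}

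The main obstacle is the last step, and anisotropy makes it elementary here. Suppose $g_kv_k\to0$ with $0\ne v_k\in L_\Z$. Since $g_k$ preserves $q$, we get $q(v_k,v_k)=q(g_kv_k,g_kv_k)\to0$. But $q(v_k,v_k)$ lies in the discrete set $\tfrac1d\Z$ for a fixed denominator $d$, so $q(v_k,v_k)=0$ for large $k$. This contradicts \textup{(4)}. Mahler's criterion then makes the image of $G$ in the space of lattices relatively compact. It remains to see that $G\cdot L_\Z$ is closed there, so that $\Gamma\backslash G$ is compact. This closedness is the delicate point: it is the general fact that arithmetic orbits of reductive groups are closed. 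I would either cite it from Borel--Harish-Chandra, or prove it in this case using that the stabilizer of $L_\Z$ is commensurable with $\Gamma$ and that the orbit is cut out by the preserved integral form $q$.
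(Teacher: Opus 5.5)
Your proof is correct. The paper does not prove this lemma; it only cites Morris, who treats the general Godement criterion and the identification of $\operatorname{rank}_\Q SO(q)$ with the Witt index. You give a self-contained version of the standard argument, specialized to quadratic forms, organized as a cycle:
\begin{itemize}
\item (1)$\Rightarrow$(2), by contracting a nontrivial unipotent element toward the identity;
\item (2)$\Rightarrow$(3), via (3)$\Leftrightarrow$(4) and Eichler--Siegel transvections;
\item (3)$\Leftrightarrow$(4), via weight spaces of $\Q$-split tori;
\item (4)$\Rightarrow$(1), via Mahler's criterion and discreteness of $q$ on $L_\Z$.
\end{itemize}
What this buys over the citation is an elementary proof of the one substantive direction, with no appeal to Borel--Harish-Chandra.

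Two remarks.

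First, your extra hypothesis $\dim L_\Q\ge3$ is genuinely needed. Suppose $L_\Q$ is a rational hyperbolic plane. Then $SO(L_\Q)\cong\mathbb G_m$, and every arithmetic $\Gamma$ is finite, so it has no nontrivial unipotent elements. Yet $\Gamma\backslash SO(L_\R,q)$ is noncompact. So (2) does not imply the other conditions in rank two, and the lemma as stated in the paper needs this restriction. The restriction is harmless for the paper, since $\operatorname{rk}L\ge3$ in every application. This is also exactly where your transvection argument uses the dimension: you must choose $w\perp e$ with $w\notin\Q e$, because $E_{e,ce}=\mathrm{id}$. You should say this explicitly.

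Second, the closedness step you flag as delicate is easy here, and it can be merged with properness. Suppose $g_kL_\Z\to\Lambda$, and choose bases $g_k\gamma_k v\to b$ with $\gamma_k\in GL(L_\Z)$. The Gram matrices $\bigl(q(\gamma_kv_i,\gamma_kv_j)\bigr)$ have entries in $\frac1d\Z$ and converge, so they are eventually constant. Hence $\delta_k:=\gamma_k\gamma_{k_0}^{-1}\in O(L_\Z)$, and $g_k\delta_k$ converges in $O(L_\R,q)$. If $\det\delta_k=-1$, correcting by a fixed $\delta_{k_1}$ puts the limit in $SO(L_\R,q)$.

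Combined with your Mahler argument, this gives sequential compactness of $SO(L_\R,q)/SO(L_\Z)$ directly, with no need for the abstract closed-orbit theorem. Commensurability then transfers compactness to $\Gamma$.
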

	
	

	\smallskip
	\noindent
	


	Godement's criterion is not enough to establish the density of the orbit $\Gamma\cdot\alpha$ of a nonzero $q$-null vector $\alpha$ in $SO^+(1,n)$. Following the idea of Sibony--Soldatenkov--Verbitsky~\cite{SSV}, we also need the following form of Ratner's density theorem for unipotent flows.
	\begin{lemma}\label{lem:ratner}
		Let $n\ge2$ and let $\Gamma<SO^+(1,n)$ be a uniform lattice. For every
		nonzero future null vector $\alpha$, its orbit $\Gamma\alpha$ is dense
		in the nonzero future null cone.
	\end{lemma}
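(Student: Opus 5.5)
The plan is to deduce this from Ratner's orbit-closure theorem (equivalently, the Dani--Raghunathan classification of closures of unipotent orbits) applied to the horospherical subgroup stabilizing the null ray. Set $G=SO^+(1,n)$ and fix a nonzero future null vector $\alpha$. The null cone minus the origin is the homogeneous space $G/U$, where $U=\operatorname{Stab}_G(\alpha)$. This stabilizer is the horospherical subgroup $U\cong\R^{n-1}$ times a compact factor $SO(n-1)$; it fixes the null ray pointwise, with no scaling. Then $\Gamma\alpha$ is dense in the null cone if and only if $\Gamma gU$ is dense in $G$ for $g$ with $g\cdot\alpha_0=\alpha$. This in turn holds if and only if the orbit $x U$ of $x=\Gamma g$ in $\Gamma\backslash G$ is dense. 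The identification is the standard duality between $\Gamma$-orbits on $G/H$ and $H$-orbits on $\Gamma\backslash G$.

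The first key step is to analyse the orbit $xU$. The unipotent radical $N\cong\R^{n-1}$ of $U$ is generated by unipotent one-parameter subgroups. By Ratner's orbit closure theorem, $\overline{xN}=xM$ for a closed connected subgroup $M\supseteq N$ such that $xM$ carries a finite $M$-invariant measure. For the horospherical subgroup there is a more elementary route, since $N$ is the expanding horospherical group of the geodesic flow $a_t$. On a compact quotient, every $N$-orbit is dense; this is the Hedlund/Dani--Furstenberg unique-ergodicity theorem for horocycle flows on compact quotients. The proof goes as follows. The measure $\Gamma\backslash G$ is compact, so every orbit $xa_{-t}$ stays in a compact set. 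Since $a_t N a_{-t}$ expands $N$, the translates $x a_{-t}(\text{ball in }N)a_t$ equidistribute by mixing of $a_t$ (Margulis' thickening argument). Uniformity is essential here: compactness excludes the cusp excursions that would otherwise allow closed horospheres.

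Next, we pass back to the null cone. Density of $xN$, hence of $xU$, in $\Gamma\backslash G$ means that $\Gamma gU$ is dense in $G$. Applying the continuous, open, surjective orbit map $G\to G\setminus U\cdot\alpha_0\cong\{\text{future null vectors}\}$, $h\mapsto h\alpha_0$, gives that $\Gamma g\alpha_0=\Gamma\alpha$ is dense among all nonzero future null vectors. The radial scale is included because the diagonal subgroup $A$ acts by rescaling the null ray, and $A$ is not contained in $U$.

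The main obstacle is the density of the horospherical orbit $xN$ in the compact quotient. If only Ratner were available, one would still have to exclude intermediate subgroups $M$ with $N\subsetneq M\subsetneq G$ that have finite-volume orbits. I would handle this via the mixing argument: prove equidistribution of expanding translates $\frac{1}{|B|}\int_B f(xna_t)\,dn\to\int f$ using Howe--Moore decay of matrix coefficients for $G$ simple (with $n\ge2$, so $G$ is simple noncompact). Then I would use $N$-invariance to upgrade this to the closure of $xN$ containing $xa_tN a_{-t}$-type sets for all $t$, and conclude by compactness. The hypothesis $n\ge2$ is used precisely for simplicity and Howe--Moore.
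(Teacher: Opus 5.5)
Your proof is correct, and its skeleton is the paper's. You identify the nonzero future null cone with $G/(MN)$, reduce density of $\Gamma\alpha$ to density of $\Gamma gN$ in $G$ (equivalently of $xN$ in $\Gamma\backslash G$), and push forward by the open orbit map $h\mapsto h\xi_0$, which automatically captures the radial scale.

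The genuine difference is the dynamical input. The paper invokes Ratner's theorem in one line to get $\overline{\Gamma gN}=G$. As you correctly note, the orbit-closure theorem alone only yields $\overline{xN}=xH$ with $xH$ of finite $H$-invariant volume, and one must still exclude $N\subsetneq H\subsetneq G$. The paper leaves this step implicit; it goes through because a proper such $H$ lies in $MAN$, unimodularity forces $H\subset MN$, and a lattice in $H$ would then put nontrivial unipotent elements into the cocompact $\Gamma$.

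You instead use the classical minimality, indeed unique ergodicity, of horospherical flows on compact quotients (Hedlund for $n=2$, Veech and Bowen--Marcus in general). You prove it by Howe--Moore mixing and Margulis thickening: the pieces $x(a_{-t}Ba_t)\subset xN$ become $\epsilon$-dense uniformly in the base point $xa_{-t}$, and compactness supplies that uniformity. This route is more elementary (it predates Ratner) and gives equidistribution rather than bare density. It also makes visible exactly where uniformity of $\Gamma$ is used: in a nonuniform quotient closed horospheres occur (compare the rational alternative in Lemma~\ref{thm:density}).

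The remaining issues are cosmetic:
\begin{itemize}
\item $G\setminus U$ should be $G/U$.
\item $U$ is used both for the full stabilizer and for its unipotent radical.
\item The sign convention must be that $n\mapsto a_{-t}na_t$ expands $N$ for $xa_{-t}Ba_t$ to be a long piece of $xN$.
\item Your phrase about ``$xa_tNa_{-t}$-type sets'' is vacuous, since $a_tNa_{-t}=N$.
\end{itemize}
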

	
	\begin{proof}
		Write $G=SO^+(1,n)$, fix a nonzero future null vector $\xi_0$, and let
		$P=MAN$ be the parabolic subgroup stabilizing
		$\R_{>0}\xi_0$. Its maximal horospherical subgroup $N$ fixes
		$\xi_0$, and the stabilizer of the vector is $MN$, where
		$M\simeq SO(n-1)$. Consequently, the actual nonzero future null cone is
		$$
		G/(MN),\qquad hMN\longmapsto h\xi_0.
		$$
		By contrast, its projectivization is $G/P$.
		
		Write $\alpha=g\xi_0$. Since $\Gamma\backslash G$ is compact, Ratner's density theorem for unipotent flows~\cite{Ratner91} gives
		$$
		\overline{\Gamma gN}=G.
		$$
		Applying the orbit map $h\mapsto h\xi_0$ and using that $N$ fixes
		$\xi_0$ shows that $\Gamma\alpha$ is dense in $G/(MN)$. The distinction
		between $G/(MN)$ and $G/P$ shows that this is density on the actual cone,
		including the radial scale, rather than only density of rays on its
		projectivization. 
	\end{proof}
	We are now ready to prove Theorem~\ref{thm:newrigid}, which constructs new rigid nef isotropic classes.
	
	\begin{proof}[Proof of Theorem~\ref{thm:newrigid}]
		The proof is a nearly verbatim adaptation of the argument of Filip--Tosatti~\cite{FilipTosattiSmooth}. We only sketch it. Choose a holomorphic volume form $\Omega$ and normalize
		$$
		\mu_X=
		\frac{\Omega\wedge\overline\Omega}
		{\int_X\Omega\wedge\overline\Omega}.
		$$
		Note that $\mu_X$ is automorphism invariant. Let
		$$
		\rho:\Gamma\longrightarrow SO^+(L_\R),\qquad
		\rho(f):=(f^{-1})^*|_{L_\R},
		\qquad \Lambda:=\rho(\Gamma).
		$$
		Choose a hyperbolic element $\lambda\in\Lambda$ and a lift
		$f\in\Gamma$. Let $\alpha_+$ be the expanding eigenvector of $\lambda$. Then $d(\alpha_+)=0$. By Lemma~\ref{lem:ratner} and Lemma~\ref{thm:ssvdiam}, one has
		$$
		0=d(\alpha_+)
		\ge\limsup_{j\to\infty}d(\gamma_j^*\alpha)
		=d(\alpha)\ge0.
		$$
		Thus $d(\alpha)=0$.
	\end{proof}
	Given Theorem~\ref{thm:newrigid}, we now explain how to find appropriate lattices $S,N,L$ satisfying conditions~(A) and~(B) of that theorem.
	
	
	
	
	\smallskip
	\noindent
	\emph{Condition (A):} 
	Intersecting the nef cone with $N_\R^\perp$ imposes equality on the walls
	belonging to the chosen ADE configuration.  This gives a candidate face,
	but an additional root $\delta\in\NS(X)\setminus N$ could, in principle,
	cut its relative interior.  
	Thus, the key to satisfying condition~(A) is to ensure that $\delta^\perp$ does not meet $\C_L$. No additional root wall then cuts the relative interior, and after choosing
	the appropriate K\"ahler chamber we obtain
	\begin{equation*}
		\Nef(X)\cap N_\R^\perp=\overline{\C_L}.
	\end{equation*}

	\smallskip
	\noindent
	\emph{Condition (B):}
	We define the following positive-cone-preserving arithmetic subgroup of $SO^+(L_\R)$:
	$$O^+(L)
	:=\{g\in O(L):g(\C_L)=\C_L\}.
	$$
	The dual lattice and
	the discriminant group of $L$ are
	$$
	L^\vee=\{x\in L_\Q:(x,L)\subset\Z\},
	\qquad
	A_L=L^\vee/L.
	$$
	The finite group $A_L$ carries the discriminant quadratic form $q_L$,
	and every isometry of $L$ acts on $(A_L,q_L)$.  The
	\emph{discriminant kernel} is
	$$
	\Gamma:=O(L)_{\mathrm{disc}}
	=\ker\bigl(O(L)\longrightarrow O(A_L,q_L)\bigr).
	$$
	Since $A_L$ is finite,
	this kernel has finite index in $O(L)$.
	
	We then consider the finite-index subgroup $O^+(L)\cap O(L)_{\mathrm{disc}}$ of $O(L)$. An element $g$ of this subgroup acts
	on $\NS(X)=N\oplus L$ as $\mathrm{id}_N\oplus g$. It acts trivially on
	the discriminant group of $\NS(X)$ and therefore, by the lattice-gluing criterion~\cite{Nikulin}, extends to the K3 lattice by the identity on the
	transcendental lattice.  For the chamber selected in condition $(A)$ above, these extended isometries preserve
	all the root inequalities which define the K\"ahler chamber.  The strong
	Torelli theorem, Theorem~\ref{thm:torelli}, for K3 surfaces then
	realizes them as automorphisms of $X$.

	Thus, the key to satisfying condition~(B) is to ensure that the subgroup $\Gamma\cap SO^{+}(L_\R)$ is a uniform lattice
	in $SO^+(L_\R)$.
	

	\section{The K3 lattice realization}
	\label{sec:k3-lattice-realization}
	
	In this section, we construct appropriate lattices that yield the required nef faces and arithmetic
	automorphism groups satisfying the assumptions of Theorem~\ref{thm:newrigid}. Thus, we obtain many new rigid irrational isotropic nef classes on K3 surfaces containing $(-2)$-curves.
	We remark that the assumptions on the lattices in this section are very mild and were optimized with the assistance of AI based on the mechanism of Theorem~\ref{thm:newrigid}.
	
	Let us start with an anisotropic lattice $L$ over $\mathbb{Q}$. For $\nu\in\{3,4\}$, put
	$$
	L_{p,\nu}=\langle4p\rangle\oplus
	\langle-4\rangle^{\oplus(\nu-1)}.
	$$
	More explicitly,
	$$
	(x_0,\ldots,x_{\nu-1})^2
	=4\left(px_0^2-\sum_{j=1}^{\nu-1}x_j^2\right).
	$$
	Then $L_{p,\nu,\R}$ is Lorentzian, with signature $(1,\nu-1)$.
	
	\begin{lemma}\label{lem:anisotropy}
		Let $\nu\in\{3,4\}$ and $p>0$. Then the following are equivalent:
		\begin{enumerate}
			\item $L_{p,\nu,\Q}$ is anisotropic.
			\item $p$ is not a sum of $\nu-1$ rational squares.
		\end{enumerate}
		If $p\equiv7\pmod8$, these conditions hold simultaneously for
		$\nu=3$ and $\nu=4$. For $\nu=3$, they are also equivalent to the
		existence of a prime $q\equiv3\pmod4$ which occurs to odd order in $p$.
	\end{lemma}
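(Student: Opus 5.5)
The plan is to reduce everything to the rational quadratic form $f(x)=px_0^2-\sum_{j=1}^{\nu-1}x_j^2$. The overall factor $4$ plays no role over $\Q$. For (1)$\Rightarrow$(2), suppose instead that $p=\sum_{j=1}^{\nu-1}r_j^2$ with $r_j\in\Q$. Then $(1,r_1,\dots,r_{\nu-1})$ is a nonzero rational isotropic vector, so $L_{p,\nu,\Q}$ is isotropic. For (2)$\Rightarrow$(1), let $x\ne0$ be a rational isotropic vector. If $x_0=0$, then $\sum x_j^2=0$ forces $x=0$, which is impossible. Hence $x_0\ne0$, and dividing by $x_0^2$ gives $p=\sum_j(x_j/x_0)^2$. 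This contradicts (2).

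For the congruence statement with $p\equiv7\pmod 8$, I will show that $p$ is not a sum of three rational squares. Once this holds, $p$ is also not a sum of two rational squares, since one can add a zero, and both values of $\nu$ are covered. Suppose $p=(a^2+b^2+c^2)/d^2$ with integers $a,b,c,d$ and $d\ne0$. Then $pd^2=a^2+b^2+c^2$. Write $d=2^k m$ with $m$ odd. Then $m^2\equiv1\pmod 8$, so $pd^2=4^k n$ with $n\equiv 7\pmod 8$. By Legendre's three-square theorem, no integer of the form $4^k(8l+7)$ is a sum of three integer squares, which is a contradiction. This is the one point where a nontrivial external theorem enters. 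Alternatively, one can argue directly with the $2$-adic anisotropy of $\langle1,1,1,-p\rangle$, descending by powers of $2$ mod $8$. That direct route is elementary: if $a^2+b^2+c^2\equiv 0\pmod 4$ with not all of $a,b,c$ even, one checks squares mod $8$ to reach a contradiction.

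For the last assertion with $\nu=3$, I will use the classical two-square theorem in its rational form. A positive rational $p$ is a sum of two rational squares if and only if every prime $q\equiv3\pmod4$ occurs to even order in $p$. The direction from even orders to a representation follows from the integer two-square theorem after clearing denominators: multiplying by $d^2$ changes each order by an even amount. Conversely, if $pd^2=a^2+b^2$, then each prime $q\equiv3\pmod4$ divides $a^2+b^2$ to even order, because $-1$ is a nonsquare mod $q$. Indeed, $q\mid a^2+b^2$ forces $q\mid a$ and $q\mid b$, and one divides out $q^2$ and iterates. So the order of $q$ in $p$ is even. Combined with the equivalence of (1) and (2), this gives the characterization.

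The main obstacle is only bookkeeping, in particular applying Legendre and Fermat to rational rather than integral numbers. I will handle this uniformly by clearing denominators and tracking parity of valuations. Here $p$ is presumably an integer; if not, the same valuation argument applies to the numerator and denominator of $p$.
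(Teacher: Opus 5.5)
Your proof is correct and follows essentially the same route as the paper. The reduction of isotropic vectors to rational representations of $p$ (using $x_0\neq0$) is identical, and the $\nu=3$ criterion is the same classical two-squares theorem combined with parity of valuations after clearing denominators. The only difference is in packaging for $p\equiv7\pmod 8$: you reduce $\nu=3$ to $\nu=4$ by adding a zero square and quote the easy direction of Legendre's theorem for $4^k(8l+7)$. The paper instead runs the equivalent parity descent directly on a primitive integral solution: odd $x_0$ is excluded mod $4$ or mod $8$, and even $x_0$ forces all $x_j$ even. That descent is exactly your suggested ``direct route.''
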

	
	\begin{proof}
		An isotropic vector of $L_{p,\nu,\Q}$ is a nonzero rational solution of
		$$
		px_0^2=x_1^2+\cdots+x_{\nu-1}^2.
		$$
		If $x_0=0$, then every $x_j=0$, so any nonzero solution has
		$x_0\ne0$ and is equivalent to a representation of $p$ as a sum of
		$\nu-1$ rational squares.
		
		Suppose now that $p\equiv7\pmod8$ and clear denominators to obtain a
		primitive integral solution. If $x_0$ is odd, then for $\nu=3$ the
		equation is impossible modulo $4$, while for $\nu=4$ it is impossible
		modulo $8$: a sum of two squares is never $3$ modulo $4$, and a sum of
		three squares is never $7$ modulo $8$. If $x_0$ is even, the right hand
		side is divisible by $4$. A sum of at most three integral squares is
		divisible by $4$ only when each summand is even. Thus all $x_j$ are
		even, contradicting primitivity.
		
		For $\nu=3$, the final equivalence is the classical two-squares
		criterion, applied after clearing denominators, multiplication by a
		rational square changes every prime valuation by an even integer.
	\end{proof}
	
	The following projection lemma is the main lattice-theoretic observation
	in the K3 construction.
	
	\begin{lemma}\label{lem:projection}
		Let $N$ be an ADE lattice, let $\nu\in\{3,4\}$, and suppose
		$L_{p,\nu,\Q}$ is anisotropic. If
		$\delta=n+\ell\in N\oplus L_{p,\nu}$ satisfies $\delta^2=-2$, then exactly one
		of the following occurs:
		\begin{enumerate}
			\item $\ell=0$ and $n$ is a root of $N$;
			\item $\ell^2>0$.
		\end{enumerate}
	\end{lemma}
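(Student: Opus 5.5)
Since $N\oplus L_{p,\nu}$ is an orthogonal direct sum, the plan is to split $\delta^2$ along it and then use anisotropy of $L_{p,\nu,\Q}$ to rule out the borderline cases. Write
\[
-2=\delta^2=n^2+\ell^2 .
\]
The lattice $N$ is negative definite and even, so $n^2\in\{0,-2,-4,\dots\}$, with $n^2=0$ exactly when $n=0$. Likewise $\ell^2\in 4\Z$, because the Gram matrix of $L_{p,\nu}$ is $4\,\mathrm{diag}(p,-1,\dots,-1)$. Now split according to the sign of $\ell^2$.

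\emph{Case $\ell^2=0$.} Then $\ell$ is a rational isotropic vector of $L_{p,\nu,\Q}$. By the anisotropy hypothesis (equivalently, condition (2) of Lemma~\ref{lem:anisotropy}) this forces $\ell=0$. Hence $n^2=-2$, so $n$ is a root of $N$. This is alternative (1).

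\emph{Case $\ell^2<0$.} Since $\ell^2\in 4\Z$, we get $\ell^2\le -4$. Then $n^2=-2-\ell^2\ge 2$, which contradicts negative definiteness of $N$. So this case cannot occur.

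\emph{Case $\ell^2>0$.} This is alternative (2).

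The two alternatives are mutually exclusive, since (1) has $\ell^2=0$ and (2) has $\ell^2>0$. Therefore exactly one of them holds.

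I do not expect a real obstacle. The only points needing care are the divisibility $\ell^2\in4\Z$, which excludes the value $\ell^2=-2$, and the use of anisotropy to kill nonzero isotropic $\ell$. In the final case one can also note that $n^2=-2-\ell^2\le -6$. This costs nothing here, but it is presumably what later shows that such $\delta^\perp$ does not meet $\C_L$.
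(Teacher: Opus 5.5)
Your proof is correct and is essentially the paper's argument. The paper splits by $n$ rather than by the sign of $\ell^2$: $n=0$ is impossible since $\ell^2=-2\notin 4\Z$, $n^2=-2$ forces $\ell=0$ by anisotropy, and $n^2\le -4$ gives $\ell^2>0$. It uses the same three facts you do: evenness and negative definiteness of $N$, $4\mid\ell^2$, and anisotropy of $L_{p,\nu,\Q}$.

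Your closing aside is not part of the proof, and its guess is off. What later keeps $\delta^\perp$ away from $\C_{p,\nu}$ is $\ell^2>0$, since then $h\cdot\delta=h\cdot\ell\neq0$ for $h\in\C_{p,\nu}$ by reverse Cauchy--Schwarz. The chamber argument itself only uses $n^2\le-4$, not $n^2\le-6$.
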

	
	\begin{proof}
		The root equation gives
		$$
		\ell^2=-2-n^2.
		$$
		If $n=0$, it would give $\ell^2=-2$, impossible because all norms in
		$L_{p,\nu}$ are divisible by $4$. If $n^2=-2$, anisotropy gives
		$\ell=0$. Every other nonzero vector in the negative-definite even root
		lattice $N$ has $n^2\le-4$, and then $\ell^2=-2-n^2>0$.
	\end{proof}
	
	We now apply the K3 Weyl-group description to choose a K\"ahler chamber
	whose closure contains the complete Lorentzian face.
	
	\begin{proposition}\label{prop:chamber}
		Let $\nu\in\{3,4\}$ and let $S=N\oplus L_{p,\nu}$, with
		$L_{p,\nu,\Q}$ anisotropic. Suppose that a
		projective K3 surface $X$ has $\NS(X)\simeq S$.  After changing the
		marking by the Weyl group, one may assume that
		\begin{enumerate}
			\item the simple roots of $N$ are represented by an ADE configuration of
			smooth rational curves;
			\item for one component $\C_{p,\nu}$ of
			$\{x\in L_{p,\nu,\R}:x^2>0\}$, one has
			$$
			\overline{\C_{p,\nu}}=\Nef(X)\cap N^\perp.
			$$
		\end{enumerate}
	\end{proposition}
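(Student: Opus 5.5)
The plan is to pick the Kähler chamber directly by a generic vector in the Lorentzian cone, and then show that no root wall meets that cone except the walls $\delta^\perp$ with $\delta\in N$. Concretely, fix a component $\C_{p,\nu}$ of the positive cone of $L_{p,\nu,\R}$ and note that $\C_{p,\nu}$ lies in the closure of one component of the positive cone $\{x\in S_\R:x^2>0\}$. After composing the marking with $-1$ if needed, this component is $\Pos(X)$. The key input is Lemma~\ref{lem:projection}. If $\delta=n+\ell$ is a root of $S$ with $\ell\neq0$, then $\ell^2>0$. For $x\in\C_{p,\nu}$ we have $(\delta,x)=(\ell,x)$, and two vectors of positive square in a Lorentzian space have nonzero pairing. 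Hence $(\delta,x)\ne0$ on all of $\overline{\C_{p,\nu}}\setminus\{0\}$, and the sign of $(\delta,x)$ is constant there. So such walls never meet $\C_{p,\nu}$. The remaining roots are the roots of $N$, and these vanish identically on $N^\perp=L_{p,\nu,\R}$.

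Next I would choose the chamber. Take $h\in\C_{p,\nu}$ and a small generic $\eta\in N_\R$ such that $(\eta,r)\ne0$ for every root $r$ of $N$. Order the signs so that $-\eta$ lies in the fundamental Weyl chamber of the simple roots, meaning that the simple roots pair positively with $h-\eta$. Roots of $N$ form a finite set, so there is no problem here. The roots with $\ell\ne0$ have $|(\delta,h)|$ bounded below. This needs care because infinitely many roots are involved. One possible route: $\ell$ runs over vectors of $L_{p,\nu}$ with $0<\ell^2=-2-n^2$, and $|(\ell,h)|\ge\sqrt{\ell^2 h^2}\ge 2\sqrt{h^2}$ by the reverse Cauchy--Schwarz inequality, since $\ell^2\ge4$. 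Meanwhile $|(n,\eta)|\le|n|\,|\eta|$ with $|n|^2=-n^2=\ell^2+2$. So I would compare $\sqrt{\ell^2+2}\,|\eta|$ against $\sqrt{\ell^2h^2}$. For $|\eta|$ small relative to $h^2$ this is uniformly dominated, and therefore $x_0:=h-\eta$ lies on no root wall. By Theorem~\ref{thm:weyl}(2), some Weyl element moves the chamber of $x_0$ to $\Amp(X)$. Changing the marking by this element, and also by $\pm1$, we may assume that $x_0$ is ample.

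With $x_0$ ample, the effective roots are exactly those positive on $x_0$. The simple roots of $N$ are positive on $x_0$ and are indecomposable among effective roots pairing positively with $x_0$. I would argue that they are classes of smooth rational curves: they are irreducible because any decomposition into effective $(-2)$-curves $C_j$ must involve curves orthogonal to $h$ in the limit. Those curves lie in $N$ by the lemma, and so a simple root cannot decompose. This gives (1).

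For (2), Theorem~\ref{thm:weyl}(3) describes $\Nef(X)$ as the set of $x\in\overline{\Pos(X)}$ with $(x,C)\ge0$ for all smooth rational curves $C$. If $x\in\overline{\C_{p,\nu}}$, then curves in $N$ give $(x,C)=0$. Every other curve $C=n+\ell$ has $\ell^2>0$, and $(C,\cdot)$ has constant sign on $\overline{\C_{p,\nu}}$. That sign is positive because $(C,x_0)>0$ and $(C,h)$ dominates $(C,\eta)$. This proves $\overline{\C_{p,\nu}}\subset\Nef(X)\cap N^\perp$. For the reverse inclusion, $\Nef\cap N^\perp\subset\overline{\Pos}\cap L_\R$, and $\overline{\Pos}\cap L_\R$ is the closure of one of the two components. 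The component must be $\C_{p,\nu}$ since $h$ belongs to it.

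I expect the main obstacle to be the uniform domination estimate in the second paragraph over infinitely many roots, together with the irreducibility of the simple roots.
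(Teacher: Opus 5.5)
Your proposal is correct and follows essentially the same route as the paper: perturb a point of $\C_{p,\nu}$ by a small vector of $N_\R$ lying in a Weyl chamber of $N$, and use Lemma~\ref{lem:projection} with the reverse Cauchy--Schwarz inequality ($\ell^2\ge4$, equivalently $n^2\le-4$) to get a bound uniform over all roots with $\ell\neq0$; then move the resulting chamber to the K\"ahler chamber by the Weyl group and read off the face $\overline{\C_{p,\nu}}=\Nef(X)\cap N^\perp$. Your irreducibility argument for item~(1) and the $\pm1$ adjustment of the marking fill in points that the paper's proof leaves implicit.
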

	
	\begin{proof}
		We first choose a system $\Delta_N$ of simple roots and a vector
		$v\in N_\R$ such that $v\cdot a>0$ for every
		$a\in\Delta_N$. Fix
		$h_0\in\C_{p,\nu}$ with $h_0^2=1$.  We claim that
		$$
		k_\varepsilon=h_0+\varepsilon v
		$$
		belongs to a root chamber for all sufficiently small $\varepsilon>0$.
		After decreasing $\varepsilon$ if necessary, one also has
		$k_\varepsilon^2=1+\varepsilon^2v^2>0$.
		
		For roots in $N$, choose positivity using the Weyl chamber determined by
		$v$. If $\delta=n+\ell$ is a root not in $N$, choose its sign so that
		$\ell\in\C_{p,\nu}$. By Lemma~\ref{lem:projection} and the Lorentzian reverse Cauchy--Schwarz inequality,
		$$
		h_0\cdot\ell\ge\sqrt{\ell^2}=\sqrt{-2-n^2}.
		$$
		Since $N_\R$ is negative definite, there is $C_v>0$ such that
		$\lvert v\cdot n\rvert\le C_v\sqrt{-n^2}$. For $n^2\le-4$,
		$$
		\frac{\sqrt{-2-n^2}}{\sqrt{-n^2}}\ge\frac1{\sqrt2}.
		$$
		Thus $k_\varepsilon\cdot\delta>0$ for every such positive root whenever
		$\varepsilon<(\sqrt2 C_v)^{-1}$. The same $k_\varepsilon$ is positive
		on all positive roots of $N$. It therefore lies in a chamber of the
		positive cone.  By the K3 Weyl-group and cone description in
		Theorem~\ref{thm:weyl}, changing the marking by a unique Weyl-group element
		makes this chamber the marked K\"ahler chamber. Since $k_\varepsilon$ is then a K\"ahler class and
		$k_\varepsilon\to h_0$, the class $h_0$ is nef.

		We finally let $h\in\C_{p,\nu}$. It is orthogonal to every root in $N$, while
		for every other positive root $\delta=n+\ell$ one has
		$$
		h\cdot\delta=h\cdot\ell>0.
		$$
		Hence $h$ lies in the closure of the chosen K\"ahler chamber.  Taking the
		closure in $L_{p,\nu,\R}$ gives
		$\overline{\C_{p,\nu}}\subset\Nef(X)\cap N^\perp$. Conversely,
		$N^\perp\cap\NS(X)_\R=L_{p,\nu,\R}$, and every nef class lies in the closure
		of the positive-cone component containing the K\"ahler cone.  Therefore
		$\Nef(X)\cap N^\perp=\overline{\C_{p,\nu}}$.
	\end{proof}

	Recall that we are interested in the following two groups.
	$$
	\begin{aligned}
		O^+(L_{p,\nu})
		&:=\{g\in O(L_{p,\nu}):g(\C_{p,\nu})=\C_{p,\nu}\},\\
		SO^+(L_{p,\nu,\R})
		&:=\{g\in SO(L_{p,\nu},\mathbb R):g(\C_{p,\nu})=\C_{p,\nu}\}.
	\end{aligned}
	$$
	
	\begin{proposition}\label{prop:aut}
		In the setting of Proposition~\ref{prop:chamber}, assume that the embedding
		$S\hookrightarrow\Kthree$ is primitive and that the period $\sigma$ is chosen such that the induced K3 surface $X$ satisfies $\NS(X)=S$. Then the image of
		\begin{equation}\label{eqn:auto}
			\Aut(X,N)_{\mathrm{pt}}
			:=\{f\in\Aut(X):f^*|_N=\mathrm{id}_N\}
			\longrightarrow O^+(L_{p,\nu})
		\end{equation}
		contains the finite-index subgroup
		$$
		\Gamma_{p,\nu}:=\ker\bigl(O^+(L_{p,\nu})
		\longrightarrow O(A_{L_{p,\nu}})\bigr).
		$$
		Moreover,
		$\Gamma_{p,\nu}^1:=\Gamma_{p,\nu}\cap SO^+(L_{p,\nu,\R})$ has finite
		index in $\Gamma_{p,\nu}$ and is a uniform lattice in
		$SO^+(L_{p,\nu,\R})\simeq SO^+(1,\nu-1)$.
	\end{proposition}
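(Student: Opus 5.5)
The argument has two independent parts: realizing $\Gamma_{p,\nu}$ by automorphisms, and the lattice property of $\Gamma^1_{p,\nu}$.

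\emph{Realization.} Take $g\in\Gamma_{p,\nu}$ and set $\tilde g:=\mathrm{id}_N\oplus g$ on $S=N\oplus L_{p,\nu}$.
\begin{itemize}
\item[(1)] Since $A_S=A_N\oplus A_{L_{p,\nu}}$ and $g$ acts trivially on $A_{L_{p,\nu}}$, the isometry $\tilde g$ acts trivially on $A_S$.
\item[(2)] Put $T=S^\perp\subset\Kthree$. Because $S\hookrightarrow\Kthree$ is primitive and $\Kthree$ is unimodular, $\Kthree$ is obtained by gluing $S$ and $T$ along the anti-isometry $A_S\cong A_T$ \cite{Nikulin}.
\item[(3)] An isometry of the form $\tilde g\oplus\mathrm{id}_T$ extends to $\Kthree$ exactly when its actions on $A_S$ and $A_T$ agree under this identification. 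By (1) both actions are trivial, so we obtain $\Phi\in O(\Kthree)$ with $\Phi|_S=\tilde g$ and $\Phi|_T=\mathrm{id}$.
\item[(4)] Since $\NS(X)=S$, the period $\sigma$ lies in $T_\C$. Hence $\Phi$ fixes $\sigma$ and is a Hodge isometry.
\item[(5)] Choose $h\in\C_{p,\nu}$. By Proposition~\ref{prop:chamber}, $k:=h+\varepsilon v$ is a Kähler class for small $\varepsilon>0$.
\item[(6)] We have $\Phi(k)=g(h)+\varepsilon v$ with $g(h)\in\C_{p,\nu}$, because $g\in O^+$ preserves $\C_{p,\nu}$. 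The estimates in the proof of Proposition~\ref{prop:chamber} were uniform over $h_0\in\C_{p,\nu}$ with $h_0^2=1$. After normalizing $g(h)$ and rescaling $\varepsilon$, $\Phi(k)$ is positive on all positive roots, hence Kähler.
\item[(7)] Theorem~\ref{thm:torelli}(2) now gives an automorphism $f$ with $f^*=\Phi$ (or $\Phi^{-1}$, depending on convention). This $f$ fixes $N$ pointwise and maps to $g$ under \eqref{eqn:auto}.
\end{itemize}
The uniformity in step (6) is what I expect to need checking. Concretely, one has to re-derive the lower bound $h\cdot\ell\ge\sqrt{\ell^2}$ for unit $h$ together with the bound $|v\cdot n|\le C_v\sqrt{-n^2}$ for the new $h$. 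This is fine since the constants are independent of $h$.

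\emph{Lattice property.}
\begin{itemize}
\item[(8)] $\Gamma_{p,\nu}$ is the kernel of a map to the finite group $O(A_{L_{p,\nu}})$, and $O^+$ has index at most $2$ in $O(L_{p,\nu})$. So $\Gamma_{p,\nu}$ has finite index in $O(L_{p,\nu})$, and is arithmetic.
\item[(9)] Intersecting with the index-$\le2$ subgroup $SO$ gives $\Gamma^1_{p,\nu}$, of finite index in $\Gamma_{p,\nu}$.
\item[(10)] By Borel--Harish-Chandra, $\Gamma^1_{p,\nu}$ is a lattice in $SO(L_{p,\nu,\R})$, and it lies in the identity component $SO^+\simeq SO^+(1,\nu-1)$.
\item[(11)] Since $L_{p,\nu,\Q}$ is anisotropic by hypothesis, Lemma~\ref{lem:godement} gives compactness of the quotient, so $\Gamma^1_{p,\nu}$ is a uniform lattice. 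Finite-index passage preserves cocompactness.
\end{itemize}
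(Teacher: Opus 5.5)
Your proposal is correct and follows the paper's proof. You extend $\mathrm{id}_N\oplus g$ by the identity on $S^\perp$ via Nikulin's gluing criterion, invoke strong Torelli, and get cocompactness of $\Gamma^1_{p,\nu}$ from the anisotropy of $L_{p,\nu,\Q}$ via Godement. The only difference is how you verify the K\"ahler condition. The paper notes that the extended isometry fixes the positive roots of $N$ and, since $g$ preserves $\C_{p,\nu}$, permutes the positive roots $n+\ell$ with $\ell\in\C_{p,\nu}$, so it preserves the chamber outright. You instead transport the single K\"ahler class $h+\varepsilon v$ and reuse the $h_0$-uniform estimate from Proposition~\ref{prop:chamber}. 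Both work, and the paper's route is slightly more economical.
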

	
	\begin{proof}
		It is clear that the group $\Gamma_{p,\nu}$ has finite index in
		$O^+(L_{p,\nu})$. We next show that any
		$g\in\Gamma_{p,\nu}$ is contained in the image of the group homomorphism~\eqref{eqn:auto}. The isometry
		$\mathrm{id}_N\oplus g$ acts trivially on
		$A_S=A_N\oplus A_{L_{p,\nu}}$. Since $S\subset\Kthree$ is primitive, it
		extends by the identity on $S^\perp$ to an integral isometry of
		$\Kthree$ \cite[Corollary 1.5.2]{Nikulin}.  It is a Hodge isometry for
		the chosen period.
		
		The extended isometry fixes the positive roots of $N$ and also preserves
		$\C_{p,\nu}$. By Lemma~\ref{lem:projection}, it therefore permutes the positive roots not contained in the ADE root lattice $N$. Recall that roots are integral elements of self-intersection $-2$. It follows that the isometry preserves the K\"ahler chamber and hence the K\"ahler cone of $X$. By the strong global Torelli theorem, Theorem~\ref{thm:torelli}, there is a unique automorphism
		$f_g\in\Aut(X)$ whose pullback on $H^2(X,\Z)$ is this extended Hodge
		isometry. Thus $f_g^*|_N=\mathrm{id}_N$ and
		$f_g^*|_{L_{p,\nu}}=g$, so every $g\in\Gamma_{p,\nu}$ belongs to the displayed
		image.  Moreover, uniqueness gives $f_{gh}=f_h\circ f_g$, because
		pullback reverses composition. Hence the map $g\mapsto f_{g^{-1}}$ is a homomorphism.
		
		It remains to prove cocompactness of the subgroup $\Gamma^1_{p,\nu}$ of $SO^{+}(L_{p,\nu,\R})$. The rational quadratic space
		$L_{p,\nu,\Q}$ is anisotropic, so the
		$\Q$-rank of the algebraic group $SO(L_{p,\nu,\Q})$ is zero. The group
		$\Gamma_{p,\nu}^1$ is arithmetic in $SO^+(L_{p,\nu,\R})$. Godement's compactness
		criterion \cite{Morris} therefore implies that
		$\Gamma_{p,\nu}^1\backslash\mathbb P(\C_{p,\nu})$ is compact. Thus
		$\Gamma_{p,\nu}^1$ has finite index in $\Gamma_{p,\nu}$ and is a uniform
		lattice in
		$SO^+(L_{p,\nu,\R})\simeq SO^+(1,\nu-1)$. Passing to a torsion-free
		finite-index subgroup does not change this conclusion.
	\end{proof}
	
	We now give a convenient numerical condition for the
	primitive embedding required in Theorem~\ref{thm:newrigid}.
	The following proposition is the form which will be used in the examples.
	
	\begin{proposition}\label{prop:embedding}
		Let $N$ be an ADE lattice and let $\nu\in\{3,4\}$. Set
		$$
		S_{N,p,\nu}:=N\oplus L_{p,\nu}.
		$$
		If
		\begin{equation}\label{eq:stable}
			\rk(N)+\length(A_N)\le20-2\nu,
		\end{equation}
		then, for every $p>0$, the lattice $S_{N,p,\nu}$ admits a primitive
		embedding into $\Kthree$.
	\end{proposition}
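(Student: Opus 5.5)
The plan is to apply Nikulin's existence criterion for primitive embeddings into even unimodular lattices \cite[Corollary~1.12.3 / Theorem~1.14.4]{Nikulin}. In the form I will use, it says the following. Let $S$ be an even lattice of signature $(s_+,s_-)$ and let $(l_+,l_-)$ satisfy $l_+-l_-\equiv0\pmod 8$, $l_\pm\ge s_\pm$, and
$$
l_++l_--\rk S>\length(A_S).
$$
Then $S$ admits a primitive embedding into an even unimodular lattice of signature $(l_+,l_-)$. I take $(l_+,l_-)=(3,19)$, which is the signature of $\Kthree$. Since $l_+-l_-=-16\equiv0\pmod8$, the indefinite even unimodular lattice of this signature is unique, so the target lattice is $\Kthree$ itself.

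First I check the signature conditions. $N$ is negative definite and $L_{p,\nu}$ has signature $(1,\nu-1)$, so $S_{N,p,\nu}$ has signature $(1,\rk N+\nu-1)$. Clearly $s_+=1\le3$. For $s_-$, hypothesis \eqref{eq:stable} gives $\rk N\le20-2\nu$, hence $s_-=\rk N+\nu-1\le19-\nu<19$. I also note that $S_{N,p,\nu}$ is even, because $N$ and $L_{p,\nu}$ are both even.

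Next I bound the length of the discriminant group. Because the sum is orthogonal, $A_S=A_N\oplus A_{L_{p,\nu}}$. The diagonal form gives
$$
A_{L_{p,\nu}}\cong\Z/4p\Z\oplus(\Z/4\Z)^{\nu-1},
$$
which is generated by $\nu$ elements, so $\length(A_{L_{p,\nu}})\le\nu$. For each prime, the minimal number of generators of the $\ell$-part of a direct sum is the sum over the summands. Taking the maximum over primes therefore gives $\length(A_S)\le\length(A_N)+\nu$. Combining this with \eqref{eq:stable}:
$$
22-\rk S=22-\rk N-\nu\ge 22-(20-2\nu-\length(A_N))-\nu=\length(A_N)+\nu+2>\length(A_S).
$$
This is Nikulin's strict inequality, with room to spare. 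This holds for every $p>0$, since $p$ affects only the cyclic factor $\Z/4p\Z$ and not the length bound.

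The only real obstacle is citing the correct version of Nikulin's criterion. The statement requiring only $\rk S+\length(A_S)<\rk\Lambda$ needs no local analysis of $q_{A_S}$. I will make sure to use that version rather than the sharper one at equality, which would require checking local conditions on $q_{A_S}$ at $p=2$. The slack of $2$ shows that the strict form applies.
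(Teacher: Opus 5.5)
Your proposal is correct and follows the same route as the paper's proof. Both arguments compute the signature $(1,\rk N+\nu-1)$ and bound $\length(A_S)\le\length(A_N)+\nu$. Both then use \eqref{eq:stable} to obtain $22-\rk S\ge\length(A_S)+2$, so that the strict stable-range form of Nikulin's criterion (Corollary~1.12.3) gives the primitive embedding into $\Kthree$.
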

	
	\begin{proof}
		The lattice $S_{N,p,\nu}$ is even of signature
		$(1,\rk(N)+\nu-1)$, and
		$$
		\length(A_{S_{N,p,\nu}})
		\le \length(A_N)+\length(A_{L_{p,\nu}})
		\le \length(A_N)+\nu.
		$$
		Its orthogonal complement in $\Kthree$ would have signature
		$(2,20-\rk(N)-\nu)$ and rank $22-\rk(N)-\nu$. Condition
		\eqref{eq:stable} implies
		$$
		22-\rk(N)-\nu\ge\length(A_N)+\nu+2
		\ge\length(A_{S_{N,p,\nu}})+2.
		$$
		The strict stable-range form of Nikulin's primitive embedding theorem
		\cite[Theorem 1.12.2 and Corollary 1.12.3]{Nikulin} therefore applies.
		The signature inequalities are automatic: the complement has two positive
		directions and at least three negative directions under
		\eqref{eq:stable}.
	\end{proof}
	
	\begin{remark}
		The inequality~\eqref{eq:stable} is sufficient, but not necessary, for the existence of a primitive embedding.
	\end{remark}

	\subsection{Numerical examples}
	In this subsection, we give explicit numerical examples. Every example is
	obtained from the same facewise construction with
	$S_{N,p,\nu}=N\oplus L_{p,\nu}$. The
	tables only record lattice data for which the K3
	realization is guaranteed. The rigidity and regularity arguments are the
	same for all of them.
	
	\subsubsection{Parameters \texorpdfstring{$p$}{p}}
	
	For $\nu=3$, Lemma~\ref{lem:anisotropy} shows that every $p$ having a
	prime $q\equiv3\pmod4$ to odd exponent produces an example. 
	For either $\nu=3$ or $\nu=4$, the common subfamily
	$$
	p\equiv7\pmod8
	$$
	is anisotropic. For fixed $(N,\nu)$, distinct $p$ give pairwise
	nonisometric
	N\'eron--Severi lattices because
	$$
	|\det S_{N,p,\nu}|=4^\nu p\,|\det N|.
	$$
	The moduli space of $S_{N,p,\nu}$-polarized K3 surfaces has dimension
	$$
	20-\rk(S_{N,p,\nu})=20-\nu-\rk(N).
	$$
	
	The stable-range inequality \eqref{eq:stable} becomes
	$$
	\begin{array}{c|c|c|c}
		\nu&\operatorname{sign}(L_{p,\nu})&
		\text{boundary of rays}&
		\text{sufficient condition}\\ \hline
		3&(1,2)&S^1&\rk(N)+\length(A_N)\le14\\
		4&(1,3)&S^2&\rk(N)+\length(A_N)\le12.
	\end{array}
	$$
	The detailed catalogues below use $\nu=3$, which has the larger
	stable range. Replacing $14$ by $12$ gives the corresponding rank-four
	catalogue without changing any proof.
	
	\subsubsection{All irreducible ADE types guaranteed by the stable range}
	
	For an irreducible root lattice, the discriminant lengths are
	$$
	\begin{array}{c|ccccc}
		N&A_n&D_{2m}&D_{2m+1}&E_6, E_7&E_8\\ \hline
		\length(A_N)&1&2&1&1&0.
	\end{array}
	$$
	Thus Proposition~\ref{prop:embedding}, with $\nu=3$, gives:
	$$
	\begin{array}{c|c|c}
		N&\text{guaranteed types}&\text{dimension of the K3 family}\\ \hline
		A_n&1\le n\le13&17-n\\
		D_n,\ n\ {\rm odd}&5\le n\le13&17-n\\
		D_n,\ n\ {\rm even}&4\le n\le12&17-n\\
		E_6&E_6&11\\
		E_7&E_7&10\\
		E_8&E_8&9
	\end{array}
	$$
	Each row represents infinitely many N\'eron--Severi lattices as $p$
	varies, and every individual rank-three surface carries a full circle
	$\mathbb P(\partial\C_p)$ of rigid classes.
	
	\subsubsection{Uniform repeated-component series}
	
	Using the conservative additive bound for discriminant length, one obtains
	$$
	\begin{array}{c|c}
		N&\text{guaranteed multiplicities}\\ \hline
		A_1^{\oplus k}&1\le k\le7\\
		A_2^{\oplus k}&1\le k\le4\\
		A_3^{\oplus k}&1\le k\le3\\
		A_4^{\oplus k}&1\le k\le2\\
		A_5^{\oplus k}&1\le k\le2\\
		A_6^{\oplus k}&1\le k\le2\\
		D_4^{\oplus k}&1\le k\le2\\
		D_5^{\oplus k}&1\le k\le2\\
		E_6^{\oplus k}&1\le k\le2.
	\end{array}
	$$
	
	\subsubsection{Complete two-component catalogue under the conservative test}
	
	The following table records all unordered two-component ADE sums satisfying
	$\rk(N)+b(N)\le14$, where $b$ is the sum of the individual
	discriminant-length bounds.  Since
	$\length(A_N)\le b(N)$, every entry satisfies \eqref{eq:stable} for
	$\nu=3$.
	
	\begin{longtable}{>{\raggedright\arraybackslash}p{0.19\textwidth}
			>{\raggedright\arraybackslash}p{0.74\textwidth}}
		\toprule
		\text{first factor}&\text{allowed second factors}\\
		\midrule
		\endhead
		$A_1$&
		$A_1,\ldots,A_{11};\ D_4,\ldots,D_{11};\ E_6,E_7,E_8$\\
		$A_2$&
		$A_2,\ldots,A_{10};\ D_4,\ldots,D_9;\ E_6,E_7,E_8$\\
		$A_3$&
		$A_3,\ldots,A_9;\ D_4,\ldots,D_9;\ E_6,E_7,E_8$\\
		$A_4$&
		$A_4,\ldots,A_8;\ D_4,\ldots,D_7;\ E_6,E_7,E_8$\\
		$A_5$&
		$A_5,A_6,A_7;\ D_4,D_5,D_6,D_7;\ E_6,E_7,E_8$\\
		$A_6$&
		$A_6;\ D_4,D_5;\ E_6$\\
		$A_7$&
		$D_4,D_5$\\
		$D_4$&
		$D_4,D_5,D_6,D_7;\ E_6,E_7,E_8$\\
		$D_5$&
		$D_5,D_6,D_7;\ E_6,E_7,E_8$\\
		$E_6$&
		$E_6$\\
		\bottomrule
	\end{longtable}
	
	This catalogue contains $98$ unordered pairs.  Representative high-rank
	examples include
	$$
	\begin{gathered}
		A_1\oplus A_{11},\quad A_2\oplus A_{10},\quad
		A_3\oplus A_9,\quad A_4\oplus A_8,\quad A_5\oplus A_7,\\
		A_5\oplus E_8,\quad D_5\oplus E_8,\quad
		D_5\oplus D_7,\quad E_6\oplus E_6 .
	\end{gathered}
	$$
	
	\subsubsection{Examples with three or more components}
	
	There are many further examples.  The following list illustrates the range,
	each entry satisfies the conservative inequality
	$\rk(N)+b(N)\le14$:
	$$
	\begin{array}{lll}
		A_1^7,&A_1^5\oplus A_2,&A_1^4\oplus A_3,\\
		A_1^3\oplus A_2^2,&A_1^2\oplus A_2^3,&A_2^4,\\
		A_1^3\oplus D_4,&A_1^2\oplus D_5,&A_1\oplus A_2\oplus D_5,\\
		A_1^2\oplus E_6,&A_1\oplus A_2\oplus E_6,&
		A_1\oplus A_3\oplus E_6,\\
		A_1^2\oplus E_7,&A_1\oplus A_2\oplus E_7,&
		A_1^3\oplus E_8,\\
		A_1\oplus A_2\oplus E_8,&A_2^2\oplus A_3,&
		A_1\oplus A_3^2 .
	\end{array}
	$$
	Together with all admissible $p$, even this short list yields infinitely
	many pairwise nonisometric projective K3 families.

	\bibliographystyle{plain}
	\bibliography{rigidity_gh_k3_draft}
	
\end{document}